\documentclass[12pt]{article}

\usepackage[letterpaper,margin=1in]{geometry}

\AtBeginDocument{%
	\setlength{\abovedisplayskip}{6pt plus 2pt minus 1pt}%
	\setlength{\belowdisplayskip}{6pt plus 2pt minus 1pt}%
	\setlength{\abovedisplayshortskip}{3pt plus 2pt}%
	\setlength{\belowdisplayshortskip}{4pt plus 2pt minus 1pt}%
}

\usepackage[utf8]{inputenc}
\usepackage[T1]{fontenc}
\usepackage{lmodern}
\usepackage{microtype}
\usepackage{amsmath,amsthm,amsfonts,amssymb,mathrsfs,mathtools}
\usepackage{bbm}

\usepackage{graphicx}
\usepackage{float}
\usepackage{caption}
\usepackage{subcaption}
\usepackage{booktabs}
\numberwithin{figure}{section}

\usepackage{xcolor}
\usepackage{comment}
\usepackage{enumitem}
\usepackage{titlesec}
\titlespacing*{\section}{0pt}{6pt plus 1pt minus 1pt}{3pt plus 1pt minus 1pt}
\titlespacing*{\subsection}{0pt}{5pt plus 1pt minus 1pt}{2pt plus 1pt minus 1pt}
\setlist{topsep=2pt plus 1pt minus 1pt,itemsep=1pt,parsep=0pt}
\makeatletter
\def\thm@space@setup{%
	\thm@preskip=5pt plus 1pt minus 1pt%
	\thm@postskip=5pt plus 1pt minus 1pt%
}
\makeatother

\usepackage[round,authoryear]{natbib}

\usepackage[hidelinks]{hyperref}

\theoremstyle{plain}

\newtheorem{theorem}{Theorem}[section]
\newtheorem{lemma}[theorem]{Lemma}
\newtheorem{proposition}[theorem]{Proposition}
\newtheorem{corollary}[theorem]{Corollary}

\theoremstyle{definition}
\newtheorem{definition}[theorem]{Definition}
\newtheorem{example}[theorem]{Example}

\newtheorem*{namedthm}{\namedthmname}
\newcounter{namedthm}
\makeatletter
\newenvironment{named}[1]
{\def\namedthmname{Assumption #1}%
	\refstepcounter{namedthm}%
	\namedthm\def\@currentlabel{#1}}
{\endnamedthm}
\makeatother

\theoremstyle{remark}
\newtheorem{remark}[theorem]{Remark}

\newcommand{\defeq}{\coloneqq}
\newcommand{\ga}{\vartheta}

\newcommand{\E}{\mathbb{E}}

\newcommand{\Cov}{\mathrm{Cov}}

\newcommand{\var}{\mathbb{V}\mathrm{ar}}

\newcommand{\op}{\mathrm{op}}
\newcommand{\ind}{\boldsymbol{1}}

\newcommand{\cov}{\mathrm{cov}}

\newcommand{\Od}{\mathrm O(d)}
\newcommand{\PS}{\mathrm P}
\newcommand{\QS}{Q}

\title{
	A bootstrap approach for testing invariance under parameterized group actions: orthogonal reflections and axial symmetry}

\author{
	Alejandro Cholaquidis\thanks{Centro de Matemáticas, Facultad de Ciencias, Universidad de la República, Uruguay. Email: \texttt{acholaquidis@cmat.edu.uy}}
	\and
	Juan Cuesta-Albertos\thanks{Departamento de Matemáticas, Estadística y Computación, Universidad de Cantabria, Spain. Email: \texttt{juan.cuesta@unican.es}}
	\and
	Ricardo Fraiman\thanks{Centro de Matemáticas, Facultad de Ciencias, Universidad de la República, Uruguay. Email: \texttt{rfraiman@cmat.edu.uy}}
	\and
	Manuel Hernández-Banadik\thanks{Instituto de Estadística, Facultad de Ciencias Económicas y de Administración, Universidad de la República, Uruguay. Email: \texttt{manuel.hernandez@fcea.edu.uy}}
}

\date{}

\begin{document}
		
		\maketitle
		
	\begin{abstract}
		Testing whether a multivariate distribution is invariant under an orthogonal transformation is a classical problem when the transformation is fixed in advance. We address a harder situation: the transformation is unknown and must be inferred from the data. We frame this as invariance under a group action whose representation is indexed by an unknown parameter. We work in $\mathbb{R}^d$ under a simple-spectrum assumption on the covariance matrix $\Sigma$. Under this assumption, any orthogonal transformation that preserves the distribution must commute with $\Sigma$, and therefore must be a {reflection through a subspace spanned by a subset of the principal directions. This reduces the search over the orthogonal group to a finite family of candidate reflections, one for each subset of principal axes.} For each candidate, we construct a Kolmogorov--Smirnov-type statistic based on projected data and sample splitting. We derive its asymptotic distribution in a triangular-array framework and establish bootstrap validity under suitable regularity conditions. Axial symmetry about an unspecified direction (that is, invariance under reflection across an unknown one-dimensional subspace) and hyperplane (Householder) symmetry about an unspecified normal direction are the two leading particular cases: we treat them in detail, the former driving the simulation study and the latter the real-data application, to show how a concrete problem involving an unknown group representation can be successfully addressed.
	\end{abstract}
	
	\noindent\textbf{Key words and phrases:}
	Axial symmetry; Bootstrap; Empirical process; Orthogonal transformations; Principal components; Random projections; Reflections through principal subspaces.
	
	\medskip
	
	\noindent\textbf{AMS 2020 subject classifications:}
	Primary 62G10; secondary 62G20, 62H15.

	\section{Introduction}
	
	Testing distributional invariance under a group action is a classical and powerful theme in multivariate statistics. It traditionally covers properties such as invariance under reflections, rotations, or general orthogonal transformations \citep{serfling2006multivariate}. Standard testing procedures usually assume that the transformation of interest is fixed in advance. A richer setting arises when the representation is indexed by an unknown parameter that must itself be estimated from the data. A general treatment of such parameterized group actions poses substantial theoretical challenges, but many important statistical problems fit naturally into this framework. In this paper we focus on a fundamental and highly interpretable case: testing whether a distribution in \(\mathbb R^d\) is invariant under an orthogonal transformation whose defining subspace is unknown and must be inferred from the data.
	
	To formalize this setting, let \(G\) be a compact group with Haar probability measure \(\lambda_G\), and let \(\Theta\) be a parameter space. For each \(\theta\in\Theta\), suppose that
	$T_{\theta,g}:\mathbb R^d\to\mathbb R^d,$ $g\in G,$
	defines a measurable action of \(G\), that is,
	$T_{\theta,e}=\mathrm{Id},$ $T_{\theta,g_1g_2}=T_{\theta,g_1}\circ T_{\theta,g_2}.$
	Writing \((T_{\theta,g})_\#P\) for the push-forward of a probability measure
	\(P\) by \(T_{\theta,g}\), define
	$\mathcal I_\theta 	= 	\left\{
	P:\ (T_{\theta,g})_\#P=P
	\text{ for all }g\in G
	\right\}.$ 	The corresponding invariant-distribution null hypothesis is
	\[
	H_0:\quad
	P\in\bigcup_{\theta\in\Theta}\mathcal I_\theta,
	\]
	or equivalently,
	$H_0: \text{there exists }\theta_0\in\Theta
	\text{ such that }
	(T_{\theta_0,g})_\#P=P
	\text{ for all }g\in G.$
	The Haar measure provides the natural projection onto each null class: the \(\lambda_G\)-average \(P\mapsto\int_G (T_{\theta,g})_\#P\,d\lambda_G(g)\) maps any law to a \(G\)-invariant one, so \(\mathcal I_\theta\) is never empty, and \(P\in\mathcal I_\theta\) exactly when \(P\) is a fixed point of this average. For \(G=\mathbb Z_2\), the case studied here, this projection is the symmetrization \(\tfrac12 P+\tfrac12 (T_{\theta,-1})_\#P\). Its empirical counterpart, with \(\theta\) replaced by an estimate, is precisely the resampling law of the bootstrap procedure we develop below.
	
	Several familiar symmetry hypotheses fit this framework. Central symmetry about
	an unknown center $\mu$ corresponds to \(G=\mathbb Z_2\) and
	\(T_{\mu,-1}(x)=2\mu-x\). Reflection through a line (axial symmetry) or through
	a hyperplane, rotational and cylindrical symmetries, and elliptical symmetry
	after an unknown affine transformation can all be formulated by choosing the
	group \(G\), the parameter \(\theta\), and the representation
	\(T_{\theta,\cdot}\) appropriately.
	
	\paragraph*{Orthogonal symmetries with an unknown subspace}
	This paper focuses on the case \(G=\mathbb Z_2\), where the nontrivial group
	element acts on centered observations through an orthogonal transformation
	whose fixed subspace is unknown. For a linear subspace \(V\subseteq\mathbb
	R^d\), let \(\PS_V\) denote the orthogonal projection onto \(V\), and consider
	the reflection \(\QS_V=2\PS_V-I_d\). This reflection fixes \(V\) pointwise and
	changes the sign of every vector in \(V^\perp\). The matrix \(\QS_V\) is
	symmetric and orthogonal, hence an involution (\(\QS_V^2=I_d\)): it is the
	orthogonal reflection with mirror subspace \(V\). When \(\dim V=1\), say
	\(V=\operatorname{span}(u)\), we recover the axial reflection
	\(R_u=2uu^\top-I_d\). When \(\dim V=d-1\), \(\QS_V\) is a Householder
	reflection. When \(V=\{0\}\), \(\QS_V=-I_d\) is central symmetry. So orthogonal
	invariance about an unspecified subspace means that there exists a subspace
	\(V_0\) such that \(X-\mathbb E[X]\stackrel{\mathcal L}{=}\QS_{V_0}(X-\mathbb
	E[X])\).
	
	A basic structural observation, developed in Section~\ref{sec:princomp}, drives
	the whole approach. Whenever the covariance matrix \(\Sigma\) exists, any
	orthogonal transformation \(Q\) that preserves the (centered) distribution must
	satisfy \(Q\Sigma=\Sigma Q\); that is, \(Q\) commutes with \(\Sigma\). Under a
	simple-spectrum assumption on \(\Sigma\) (all eigenvalues of \(\Sigma\) are
	different), the matrices that commute with \(\Sigma\) are precisely those that
	are diagonal in the principal frame. So an orthogonal symmetry must map each
	eigenvector \(u_i\) to \(\pm u_i\). Consequently, every admissible \(Q\) has the
	form
	\[
	Q \;=\; \sum_{i=1}^d \epsilon_i\,u_iu_i^\top
	\;=\; 2\PS_S-I_d,
	\qquad \epsilon=(\epsilon_1,\dots,\epsilon_d)\in\{-1,+1\}^d,\quad S=\{i:\epsilon_i=+1\},
	\]
	where \(\PS_S=\sum_{i\in S}u_iu_i^\top\) is the projection onto the principal
	subspace \(V_S=\operatorname{span}\{u_i:i\in S\}\). In other words, the only
	orthogonal transformations that can preserve the distribution are the
	reflections through principal subspaces. {This reduces the search over the
	orthogonal group \(\Od\) to a finite collection of \(2^d\) candidate
	reflections, indexed by the subsets \(S\subseteq\{1,\dots,d\}\).}
	
\paragraph*{Why the direction (subspace) is unknown}
The main statistical difficulty is that the representation of the group is
indexed by an unknown parameter, the mirror subspace \(V\) (the location
\(\mu\) enters only through a standard centering), and this parameter must be
inferred from the same data used for testing. Many existing symmetry tests
assume that the relevant direction, subspace, or transformation is specified in
advance. This assumption could be unrealistic in many cases. In shape analysis, medical
imaging, object inspection, and multivariate signal analysis, a natural
symmetry structure may exist but still be affected by pose, registration,
subject-specific variability, or latent orientation. For instance, in
three-dimensional medical images of approximately bilateral anatomical
structures, the apparent axis may depend on patient positioning, registration,
and anatomical variability. In vectorcardiographic data, the electrical
activity of the heart is represented through spatial loops whose orientation
carries diagnostic information. In both cases, symmetry or asymmetry features
are meaningful only after the relevant data-driven direction or subspace has
been identified. Testing symmetry around a fixed coordinate axis or subspace
may therefore confound genuine asymmetry with an arbitrary choice of
coordinates. A relevant inferential question is whether there exists a
data-driven subspace with respect to which the distribution is reflection
invariant, while properly accounting for the fact that this subspace has
itself been estimated from the data.

Testing symmetry about subspaces has an
established track record across several fields. In medical imaging, symmetry
provides an interpretable baseline against which structured deviations can be
assessed: symmetry-based features improve abnormality detection in chest
radiography \citep{Hogeweg2017}, and asymmetry patterns are studied as markers
of neurological and psychiatric conditions in neuroimaging
\citep{Kong2018,Kuo2022,Kong2022,Martos2018}. In industrial inspection and
heritage diagnostics, departures from expected symmetry are similarly used to
flag defects, damage, or loss of structural regularity
\citep{Ren2022,Mitra2013,SanchezAparicio2023}. These examples motivate why an
unspecified-direction symmetry test is a useful tool to have.

The application we do pursue, in Section~\ref{sec:forex-application}, is to
foreign-exchange returns, where symmetry has already been tested, but always
about a direction fixed in advance by the researcher.
\citet{rousseeuw2002depth} test angular symmetry of DEM/USD and JPY/USD
returns about the origin, and \citet{hudecova2021testing} test axial symmetry
of log-returns about fixed coordinate directions. Our method lets us
ask a sharper version of the same question for a basket of carry-trade
currencies: is the joint return distribution symmetric about a direction that
is not chosen by the researcher but estimated from the data as the common
risk factor of the basket? Section~\ref{sec:forex-application} develops this
application in detail.

Recent work has proposed tests for symmetry about specified directions or
subspaces
\citep[e.g.,][]{hudecova2021testing,hudecova2022testing,hudecova2021subspace,kalina2021common},
as well as tests for invariance under known group actions in which the null
transformation is fixed a priori and does not need to be
estimated from the data
\citep[e.g.,][]{fraiman2024application,chiu2023hypothesis}. By contrast, we
consider orthogonal-invariance problems in which the reflecting subspace is
unspecified and must be inferred from the data. To the best of our knowledge, no
asymptotically justified procedure is available for this problem.

	\paragraph*{Our contribution}
	We study the null hypothesis of invariance under an orthogonal transformation
	whose fixed subspace is unspecified, with axial symmetry (reflection across a
	one-dimensional subspace) as the leading particular case. {The structural
	reduction described above shrinks the search over \(\Od\) to the finite family
	\(\{\QS_S:S\subseteq\{1,\dots,d\}\}\) of reflections through principal
	subspaces}, but it does not by itself yield a valid test: the candidate
	transformations depend on the eigenvectors of \(\Sigma\), which are unknown
	and must be estimated from the same data used for inference. Our solution is
	based on sample splitting. We partition the sample into three independent
	subsamples. One subsample is used to estimate the covariance matrix and its
	eigenvectors, and the remaining subsamples are used to construct, for each
	candidate reflection, a Kolmogorov--Smirnov-type statistic based on projected
	and centered observations. This decouples the estimation step from the
	empirical process underlying the test statistic and yields a tractable
	asymptotic analysis.
	
	To study both the original statistic and its bootstrap analogue within a
	common framework, we work in a triangular-array setting. Under suitable
	regularity conditions, we derive the asymptotic distribution of the test
	statistic for each candidate reflection and establish validity of a bootstrap
	procedure based on data-dependent probability measures that satisfy the null
	invariance condition. The resulting method is a concrete step-by-step
	procedure: estimate the principal directions from one subsample, form the
	candidate principal-subspace reflections, compute candidate-specific
	Kolmogorov--Smirnov-type statistics on independent subsamples, calibrate them
	by bootstrap, and then form a global test for orthogonal symmetry.
	
	In \cite{romano1988}, bootstrap validity is established in a wide variety of
	settings, including tests of membership in a parametric family and tests of
	invariance under the action of a given group. Our setting falls outside those
	frameworks, because here the transformation that preserves the null
	distribution depends on an unknown geometric parameter, a principal subspace,
	that must itself be estimated from the data. The orthogonal-symmetry problem
	considered here provides a concrete instance of this broader inferential
	difficulty, and the bootstrap argument developed below suggests a possible
	route for related problems involving invariance under transformations indexed
	by unknown parameters.
	
	\paragraph*{Paper organization}
	The remainder of the paper is organized as follows.
	Section~\ref{sec:setup} introduces the basic notation and the orthogonal
	reflections through principal subspaces. Section~\ref{sec:princomp} shows how
	orthogonal symmetries are linked to the eigenvectors of the covariance matrix,
	reducing the problem to a finite family of candidate reflections and motivating
	principal-component estimators. Section~\ref{sec:test} formulates the testing
	problem and introduces the proposed split-sample statistics.
	Section~\ref{sec:mainth} develops the asymptotic theory, including the
	triangular-array weak limit, the bootstrap validity result, and the resulting
	testing procedure. Section~\ref{sec:axial} specializes the general theory to
	axial symmetry, and discusses hyperplane (Householder) and central symmetry.
	Section~\ref{sec:simus} reports a simulation study focused on the axial case, and
	Section~\ref{sec:forex-application} a real-data application to hyperplane
	symmetry. The statements and proofs of the auxiliary lemmas, together with all the remaining proofs, are deferred to the Supplementary Material.

	\section{Setup}\label{sec:setup}
	
	We work in $\mathbb{R}^{d}$, $d\ge2$, endowed with the Euclidean inner product $\langle x,y\rangle$ and norm $\|x\|=\sqrt{\langle x,x\rangle}$. Let $\mathbb{S}^{d-1}=\{u\in\mathbb{R}^{d}:\|u\|=1\}$ denote the unit sphere, and let $\Od$ denote the orthogonal group in dimension $d$.
	
	Let $X$ be an $\mathbb{R}^{d}$-valued random vector defined on $(\Omega,\mathcal{A},\mathbb{P})$ with mean $\mu=\mathbb{E}[X]$ and covariance matrix $\Sigma=\cov(X)$. We assume throughout that $\mathbb{E}\|X\|^2<\infty$. We write $Z\sim P$ to indicate that the random vector $Z$ has distribution $P$, and $X\overset{\mathcal L}{=}Y$ to indicate that the random vectors $X$ and $Y$ are identically distributed; in particular, $Z\sim\mathcal{N}(0,\Sigma)$ means that $Z$ has the centered Gaussian distribution with covariance matrix $\Sigma$. We denote convergence in distribution and in probability by $\xrightarrow{\mathcal{L}}$ and $\xrightarrow{\mathbb{P}}$, respectively.
	
	Let \(f:U\subset\mathbb{R}^d\to\mathbb{R}\) be a \(C^2\) function on an open set \(U\). For \(x\in U\), we write \(\nabla f(x)\) for the gradient of \(f\) at \(x\) and \(Hf(x)\) for the Hessian matrix (i.e., the matrix of second-order partial derivatives). For any function \(g\) and any matrix \(M\), \(\|g\|_\infty\) denotes the supremum norm of \(g\) and \(\|M\|_{\mathrm{op}}\) the operator norm of \(M\). Finally, for a probability measure \(P\), \(L_2(P)\) denotes the space of square-integrable functions with respect to \(P\).
	
	\subsection{Orthogonal reflections through principal subspaces}\label{subsec:reflections}
	
	For a linear subspace \(V\subseteq\mathbb{R}^d\), let \(\PS_V\) denote the orthogonal projection onto \(V\), and define the associated orthogonal reflection
	\begin{equation}\label{EqDefQ}
		\QS_V \;\defeq\; 2\PS_V-I_d,
	\end{equation}
	where \(I_d\) is the \(d\times d\) identity matrix. The map \(\QS_V\) fixes \(V\) pointwise and reverses the sign of every vector in the orthogonal complement \(V^\perp\); its eigenspaces are \(V\) (eigenvalue \(+1\)) and \(V^\perp\) (eigenvalue \(-1\)).
	
	The matrix \(\QS_V\) enjoys the two structural properties on which our analysis relies. First, it is symmetric, \(\QS_V^\top=2\PS_V^\top-I_d=\QS_V\). Second, it is orthogonal and, in fact, an involution:
	\[
	\QS_V^2=(2\PS_V-I_d)^2=4\PS_V^2-4\PS_V+I_d=I_d,
	\qquad \QS_V^{-1}=\QS_V^\top=\QS_V,
	\]
	using \(\PS_V^2=\PS_V\). Thus \(\QS_V\) is a symmetric orthogonal involution: the orthogonal reflection with mirror subspace \(V\).
	
	We say that the random vector \(X\) is reflection symmetric about the subspace \(V\) (equivalently, invariant under \(\QS_V\)) if its centered counterpart satisfies \(X-\mu\overset{\mathcal L}{=}\QS_V(X-\mu)\).
	The subspace \(V\) is the mirror of the reflection. The complementary reflection satisfies \(\QS_{V^\perp}=-\QS_V\), and since \(\QS_{V^\perp}\QS_V=-I_d\), invariance under both \(\QS_V\) and \(\QS_{V^\perp}\) is equivalent to invariance under one of them together with central symmetry; in general, invariance under \(\QS_V\) and invariance under \(\QS_{V^\perp}\) are distinct hypotheses, and we treat the two complementary mirrors as separate candidates.
	
\begin{example}\label{ex:mirrors}
	Several classical symmetries correspond to particular choices of \(V\).
	\begin{itemize}[nosep]
		\item \(\dim V=1\), \(V=\operatorname{span}(u)\), \(u\in\mathbb S^{d-1}\): then \(\PS_V=uu^\top\) and \(\QS_V=2uu^\top-I_d=:R_u\) is the axial reflection about the line \(\operatorname{span}(u)\). This is the case studied in detail in Section~\ref{sec:axial}.
		\item \(\dim V=d-1\), \(V=u^\perp\): then \(\QS_V=I_d-2uu^\top=:H_u\) is the Householder reflection across the hyperplane \(u^\perp\). Note \(H_u=-R_u\).
		\item \(V=\{0\}\): then \(\QS_V=-I_d\) is central symmetry about the mean.
		\item \(V=\mathbb R^d\): then \(\QS_V=I_d\) is the trivial (identity) transformation.
		\item \(V=\operatorname{span}\{e_i:i\in S\}\) for a coordinate subset \(S\subseteq\{1,\dots,d\}\): then \(\QS_V\) is the diagonal matrix of \(\pm1\)'s that preserves the coordinates indexed by \(S\) and flips the sign of the remaining ones.
	\end{itemize}
\end{example}

	\section{Symmetry and principal components}\label{sec:princomp}
	
	In this section we explain how orthogonal symmetries are tied to the eigenvectors of the covariance matrix. This yields the finite reduction of the orthogonal-invariance problem to a family of candidate principal-subspace reflections, and lets us derive the asymptotic distribution of the plug-in estimators of the principal components in a transparent manner.
	
	The following proposition is the key structural fact.
	
	\begin{proposition}\label{prop:commute}
		Let $X$ be a random vector in $\mathbb{R}^{d}$ whose covariance matrix $\Sigma$ exists, and let $Q\in\Od$. If $X-\mu\overset{\mathcal L}{=}Q(X-\mu)$, then $Q$ commutes with $\Sigma$: $Q\Sigma=\Sigma Q$. If, in addition, $\Sigma$ has simple spectrum with eigenpairs $(\lambda_i,u_i)$, $i=1,\dots,d$ (so that $\lambda_1>\cdots>\lambda_d\ge0$ and $u_1,\dots,u_d$ is an orthonormal basis), then $Qu_i=\epsilon_i u_i$ with $\epsilon_i\in\{-1,+1\}$ for every $i$; equivalently,
		\[
		Q=\sum_{i=1}^d \epsilon_i\,u_iu_i^\top=2\PS_S-I_d,
		\qquad \text{where}\quad S=\{i:\epsilon_i=+1\},\quad \PS_S=\sum_{i\in S}u_iu_i^\top.
		\]
		In particular, $Q$ is the orthogonal reflection $\QS_{V_S}$ through the principal subspace $V_S=\operatorname{span}\{u_i:i\in S\}$.
	\end{proposition}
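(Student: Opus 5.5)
The plan is to first compare covariances on both sides of the distributional identity, and then use simple spectrum to pin down how $Q$ acts on each eigenvector.

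\emph{Step 1: commutation.} Put $Y=X-\mu$, so $\mathbb E[Y]=0$ and $\Sigma=\mathbb E[YY^\top]$, which is finite by assumption. Since $Y\overset{\mathcal L}{=}QY$, the two vectors have the same second moments. Hence
\[
\Sigma=\mathbb E[(QY)(QY)^\top]=Q\Sigma Q^\top .
\]
Multiplying on the right by $Q$ and using $Q^\top Q=I_d$ gives $\Sigma Q=Q\Sigma$.

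\emph{Step 2: action on eigenvectors.} Fix $i$. From $\Sigma(Qu_i)=Q\Sigma u_i=\lambda_i Qu_i$, the vector $Qu_i$ lies in the eigenspace of $\lambda_i$. Under simple spectrum this eigenspace is the line $\operatorname{span}(u_i)$, so $Qu_i=c_iu_i$ for some scalar $c_i$. Because $Q$ is orthogonal, $|c_i|=\|Qu_i\|=\|u_i\|=1$, so $c_i=\epsilon_i\in\{-1,+1\}$. This step needs some care, because $\lambda_d=0$ is allowed. That causes no problem: $\lambda_d$ is still distinct from the other eigenvalues, so its eigenspace is also one-dimensional, and no positivity of $\Sigma$ is required.

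\emph{Step 3: representation.} The $u_i$ form an orthonormal basis, so $I_d=\sum_i u_iu_i^\top$. Therefore
\[
Q=Q\sum_i u_iu_i^\top=\sum_i\epsilon_i u_iu_i^\top .
\]
Split the sum according to the sign of $\epsilon_i$ and write $\sum_{i\notin S}u_iu_i^\top=I_d-\PS_S$. This gives
\[
Q=\PS_S-(I_d-\PS_S)=2\PS_S-I_d .
\]
Finally, $\PS_S$ is the orthogonal projection onto $V_S$, so by \eqref{EqDefQ} we have $Q=\QS_{V_S}$.

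None of these steps is a real obstacle. The only points that need explicit justification are the simple-spectrum step, which makes each eigenspace one-dimensional, and the use of orthogonality to force $|c_i|=1$.
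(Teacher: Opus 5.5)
Your proposal is correct and follows the paper's proof essentially step for step. Both derive $\Sigma=Q\Sigma Q^\top$ from the distributional identity and right-multiply by $Q$. Both then use the one-dimensional eigenspaces together with $\|Qu_i\|=1$ to get $Qu_i=\pm u_i$, and expand with $\sum_i u_iu_i^\top=I_d$ to obtain $Q=2\PS_S-I_d$.
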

	
\begin{remark}\label{rem:no-rotations}
		A general orthogonal matrix may have complex eigenvalues \(e^{\pm i\phi}\) associated with two-dimensional invariant subspaces (proper rotations). Proposition~\ref{prop:commute} shows that such rotations cannot preserve the distribution under a simple-spectrum assumption on \(\Sigma\). Since \(Q\Sigma=\Sigma Q\), every one-dimensional eigenspace of \(\Sigma\) is \(Q\)-invariant, so \(Qu_i=\pm u_i\) for every \(i\), and \(Q\) has only real eigenvalues. Hence the only orthogonal symmetries compatible with a simple spectrum are the principal-subspace reflections \(\QS_S\), \(S\subseteq\{1,\dots,d\}\). This is what makes the reduction to a finite candidate family possible.
	\end{remark}
	
	\begin{remark}\label{rem:householder}
		The one-dimensional instance \(S=\{i\}\) yields the axial reflection \(R_{u_i}\). So an axially symmetric \(X\) has its symmetry axis among the eigenvectors of \(\Sigma\). The \((d-1)\)-dimensional instance \(S=\{1,\dots,d\}\setminus\{i\}\) yields the Householder reflection \(H_{u_i}=I_d-2u_iu_i^\top=-R_{u_i}\), so the normal \(u_i\) to a symmetry hyperplane is likewise an eigenvector of \(\Sigma\). Both are covered by Proposition~\ref{prop:commute}.
	\end{remark}
	
	As a consequence of Proposition~\ref{prop:commute}, we recover and slightly extend the link between multiple symmetry directions and the covariance spectrum.

	\begin{corollary} \label{Coro4.2}
		Let $X$ be a random vector in $\mathbb{R}^{d}$ with covariance matrix $\Sigma$. Assume that there exists a set $\{u_1,\ldots,u_k\}\subset \mathbb{S}^{d-1}$ of axial-symmetry directions of $X$ (i.e., $X$ is invariant under each $R_{u_j}$) such that the vectors $u_{j-1}$ and $u_{j}$, $j=2,\ldots,k$, are not orthogonal. Let $W=\operatorname{span}\{u_1,\dots,u_k\}$, and let $\PS_W$ denote the matrix of the orthogonal projection onto $W$. Then there exists $\delta\ge0$ such that
		$\cov(\PS_W(X-\mu))=\delta \PS_W,$
		where $\mu=\mathbb E[X]$.
	\end{corollary}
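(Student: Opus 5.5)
We will combine the first (commutation) part of Proposition~\ref{prop:commute}, which does not need a simple spectrum, with an eigenspace-chaining argument. For each axial symmetry direction $u_j$, Proposition~\ref{prop:commute} applied to $Q=R_{u_j}=2u_ju_j^\top-I_d$ gives $R_{u_j}\Sigma=\Sigma R_{u_j}$. This is equivalent to $u_ju_j^\top\Sigma=\Sigma u_ju_j^\top$. Applying both sides to $u_j$ gives $\Sigma u_j=(u_j^\top\Sigma u_j)\,u_j$. Thus each $u_j$ is an eigenvector of $\Sigma$, with eigenvalue $\delta_j\defeq u_j^\top\Sigma u_j\ge0$.

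Next we will show that all these eigenvalues coincide, using the non-orthogonality chain. Because $\Sigma$ is symmetric, eigenvectors for distinct eigenvalues are orthogonal. Since $\langle u_{j-1},u_j\rangle\neq0$ for $j=2,\dots,k$, we get $\delta_{j-1}=\delta_j$ for each consecutive pair. By induction along the chain, $\delta_1=\cdots=\delta_k=:\delta$. Hence all of $u_1,\dots,u_k$ lie in the single eigenspace $E_\delta=\ker(\Sigma-\delta I_d)$. Since $E_\delta$ is a linear subspace, $W=\operatorname{span}\{u_1,\dots,u_k\}\subseteq E_\delta$, so $\Sigma w=\delta w$ for every $w\in W$, that is, $\Sigma\PS_W=\delta\PS_W$.

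Finally we will compute the covariance of the projection. Since $\PS_W$ is linear and symmetric,
\[
\cov(\PS_W(X-\mu))=\PS_W\Sigma\PS_W^\top=\PS_W(\Sigma\PS_W)=\delta\,\PS_W^2=\delta\,\PS_W,
\]
which is the claim. If $k=1$ the chain condition is vacuous, and the argument reduces to the first step.

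The only genuinely delicate point is the passage from the commutation relation to ``$u_j$ is an eigenvector'' without assuming a simple spectrum. We will use just the rank-one identity $u_ju_j^\top\Sigma=\Sigma u_ju_j^\top$ rather than the second part of Proposition~\ref{prop:commute}. We will also check that the chaining step uses only the symmetry of $\Sigma$, namely the orthogonality of distinct eigenspaces. No other step needs more than linear algebra, and centering by $\mu$ is handled exactly as in Proposition~\ref{prop:commute}.
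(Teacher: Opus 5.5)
Your proposal is correct and follows essentially the same route as the paper. You obtain commutation from the first part of Proposition~\ref{prop:commute}, deduce that each $u_j$ is an eigenvector of $\Sigma$, force equal eigenvalues along the non-orthogonal chain, and conclude with $\PS_W\Sigma\PS_W=\delta\PS_W$. The only cosmetic difference is that you get the eigenvector property from the rank-one identity $u_ju_j^\top\Sigma=\Sigma u_ju_j^\top$ (which also gives $\delta_j=u_j^\top\Sigma u_j\ge0$ explicitly), whereas the paper notes that $\Sigma u_j$ lies in the $+1$-eigenspace $\operatorname{span}(u_j)$ of $R_{u_j}$; the two arguments are equivalent.
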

	
\begin{remark}
		The uniform distribution on any regular polygon satisfies the assumptions of Corollary~\ref{Coro4.2}. Its covariance matrix is therefore a scalar multiple of the identity, where the proportionality constant depends on the scale of the polygon. The same conclusion holds for planar figures with at least two non-orthogonal axes of symmetry, for instance five-pointed stars. Analogous phenomena also arise in higher dimensions for highly symmetric bodies, such as regular polyhedra.
	\end{remark}
	
	\begin{named}{\textbf{(H1)}}\label{assum:H1}
		All eigenvalues of the covariance matrix $\Sigma$ are simple (pairwise distinct) and positive; that is, they satisfy $\lambda_1>\cdots>\lambda_d>0$.
	\end{named}
	
	Assume \ref{assum:H1} and let $u_1,\ldots,u_d$ be the eigenvectors associated with the ordered eigenvalues $\lambda_1>\ldots>\lambda_d>0$ of $\Sigma$. Under \ref{assum:H1}, Proposition~\ref{prop:commute} shows that the possible orthogonal symmetries reduce to the finite family
	\begin{equation}\label{eq:candidate-family}
		\big\{\QS_S=2\PS_S-I_d:\ S\subseteq\{1,\dots,d\}\big\},
		\qquad \PS_S=\sum_{i\in S}u_iu_i^\top,
	\end{equation}
	{comprising at most \(2^d\) reflections (with \(S=\{1,\dots,d\}\) giving the trivial \(I_d\)).}
	
	\begin{remark}\label{rem:spectral-gap}
		Assumption~\ref{assum:H1} underlies the whole construction: the reduction to a finite candidate family, the subspace identification, and the eigenvector linearization. The theory assumes fixed positive gaps (through Assumption \textup{(T4)} below), but the procedure is expected to be sensitive to nearly repeated eigenvalues in finite samples: the principal directions are then poorly identified, and the influence functions and linearization constants involve the inverse gaps \(1/(\lambda_i-\lambda_j)\), which blow up as \(\min_{i\ne j}|\lambda_i-\lambda_j|\to0\). In such regimes the estimated reflections \(\widehat\QS_S^3\) are unstable and the test should be applied with caution.
	\end{remark}
	
	Given an i.i.d. sample $\aleph=\{X_1,\dots,X_n\}$ from $X$, the empirical covariance matrix is
	$\widehat {\Sigma}_n = \frac{1}{ n} \sum_{j} (X_j-\bar{X}_n)(X_j-\bar{X}_n)^\top,$
	where $\bar X_n$ is the sample mean; the empirical principal components are, for $i=1,\dots,d$,
	\begin{equation}\label{eq:principal_component}
		\widehat{u}_{i,n} = \arg\max\bigl\{u^\top \widehat {\Sigma}_n u:\ \|u\|=1,\ u\perp\{\widehat u_{1,n},\dots,\widehat u_{i-1,n}\}\bigr\},
	\end{equation}
	with the convention that the orthogonality constraint is vacuous for $i=1$. For a candidate subset $S$, the (full-sample) plug-in reflection is
	\[
	\widehat \QS_{S,n}=2\widehat\PS_{S,n}-I_d,
	\qquad \widehat\PS_{S,n}=\sum_{i\in S}\widehat u_{i,n}\,\widehat u_{i,n}^\top.
	\]
	The estimator \(\widehat\QS_{S,n}\) above is stated only to fix ideas: it makes the candidate family estimable from data. In the testing procedure of Section~\ref{sec:test}, the eigenvectors are estimated from a subsample \(\aleph_3\) independent of the subsamples \(\aleph_1,\aleph_2\) used to compute the test statistic, and the plug-in reflection is the split-sample version \(\widehat\QS_S^3=2\sum_{i\in S}\widehat u_{i,n}^3(\widehat u_{i,n}^3)^\top-I_d\); the independence between the estimation subsample and the testing subsamples is what makes the asymptotic analysis tractable.

	\section{Testing symmetry}\label{sec:test}
	
	In this section we propose a hypothesis test to determine whether the data come from a distribution that is invariant under a (nontrivial) orthogonal transformation with an unspecified mirror subspace.
	
	Intrinsically, the null hypothesis is that \(X\) is invariant under reflection through some subspace of an admissible collection of dimensions. Fix a set of admissible mirror dimensions \(D\subseteq\{0,1,\dots,d-1\}\) and consider
	\begin{equation}\label{hs-intrinsic}
		\begin{cases}
			H_0: \text{there exists a subspace } V\subseteq\mathbb R^d \text{ with } \dim V\in D \text{ such that}\\
			\hphantom{H_0:{}} X-\mathbb E[X]\overset{\mathcal L}{=}\QS_V(X-\mathbb E[X]),\\
			H_1: H_0 \text{ is false}.
		\end{cases}
	\end{equation}
	This formulation makes no reference to the eigenvectors of \(\Sigma\). Under Assumption~\ref{assum:H1}, however, Proposition~\ref{prop:commute} shows that any mirror subspace \(V\) achieving invariance must be spanned by eigenvectors of \(\Sigma\); hence \(V=V_S\) for some index set \(S\) with \(|S|\in D\), and \eqref{hs-intrinsic} is equivalent to the reduced hypothesis
	\begin{equation}\label{hs}
		\begin{cases}
			H_0: \text{there exists } S\in\mathcal S \text{ such that } X - \mathbb{E}[X] \overset{\mathcal{L}}{=} \QS_S(X - \mathbb{E}[X]),  \\
			H_1:   H_0 \text{ is false},
		\end{cases}
	\end{equation}
	where \(\mathcal S=\{S\subseteq\{1,\dots,d\}:|S|\in D\}\) is the induced admissible family of index sets. Taking \(D=\{0,1,\dots,d-1\}\) gives {the default family \(\mathcal S=2^{\{1,\dots,d\}}\setminus\{\{1,\dots,d\}\}\) of all nontrivial principal-subspace reflections}; \(D=\{1\}\) gives \(\mathcal S=\{\{i\}:i=1,\dots,d\}\), i.e.\ axial symmetry (Section~\ref{sec:axial}); \(D=\{d-1\}\) gives \(\mathcal S=\{\{1,\dots,d\}\setminus\{i\}:i=1,\dots,d\}\), i.e.\ hyperplane symmetry; and \(D=\{0\}\) gives central symmetry. Because \(\mathcal S\) is defined through a set of admissible dimensions, it is closed under the reduction of Proposition~\ref{prop:commute}, so the equivalence between \eqref{hs-intrinsic} and \eqref{hs} is exact. In order to test \eqref{hs}, it is therefore enough to examine the candidate reflections \(\{\QS_S:S\in\mathcal S\}\).
	
	This reduction is naturally combined with the random-projection principle of \cite{cuesta2007sharp}. Indeed, their sharp form of the Cram\'er--Wold theorem shows that, if the absolute moments $\E\|X-\mu\|^n$ are finite and satisfy Carleman's condition
	\begin{equation}\label{carleman}
		\sum_{n\ge1} (\E\|X-\mu\|^n)^{-1/n}=\infty,
	\end{equation}
	then \(\mathcal L(X-\mu)\) is moment-determinate. Moreover, if two probability laws on \(\mathbb R^d\) are different, the set of directions on \(\mathbb S^{d-1}\) for which their one-dimensional projections coincide is contained in a projective hypersurface, and therefore has surface measure zero. Carleman's condition \eqref{carleman} is invariant under orthogonal transformations: since \(\QS_S\) is orthogonal, \(\|\QS_S(X-\mu)\|=\|X-\mu\|\), so \(\mathcal L(\QS_S(X-\mu))\) is moment-determinate whenever \(\mathcal L(X-\mu)\) is. This is what makes the identification symmetric in the two laws being compared; a moment-determinacy assumption of this type is needed for the random-projection statistic to characterize equality of the full laws, and it is imposed in the consistency result (Theorem~\ref{thm:power}).
	
	In particular, if \(\mathcal L(X-\mathbb E[X])\neq \mathcal L(\QS_S(X-\mathbb E[X]))\), then, by the sharp Cram\'er--Wold theorem, for surface-a.e.\ \(h\in\mathbb S^{d-1}\), the one-dimensional projected laws of \(\langle X-\mathbb E[X],h\rangle\) and \(\langle \QS_S(X-\mathbb E[X]),h\rangle\) are different. On the other hand, whenever \(X\) is invariant under \(\QS_S\), these projected distributions coincide for every \(h\in\mathbb S^{d-1}\); see \cite{cuesta2006,cuesta2007sharp}.
	
	Motivated by this fact, for each \(h\in\mathbb S^{d-1}\) and each orthogonal matrix \(Q\) we measure the discrepancy between the projected distributions through
	\begin{equation*}
		g_h(Q)
		=
		\sup_{t\in\mathbb R}
		\left|
		F_{\langle X-\mathbb E[X],h\rangle}(t)
		-
		F_{\langle Q(X-\mathbb E[X]),h\rangle}(t)
		\right|.
	\end{equation*}
	
	To construct our test statistic, we randomly divide the sample \(\aleph=\{X_1,\dots,X_n\}\) into three nearly balanced subsets \(\aleph_1\cup\aleph_2\cup\aleph_3\). Let \(m_n:=\lfloor n/3\rfloor\), with $|\aleph_1|=|\aleph_2|=m_n,$ $|\aleph_3|=n-2m_n.$ The subsample \(\aleph_3\) is used to estimate the principal directions (hence the candidate reflections), while \(\aleph_1\) and \(\aleph_2\) are used for the two-sample test.
	
	Given $h \in \mathbb{S}^{d-1}$ and an orthogonal matrix $Q$, we consider
	\begin{align}\label{empirical}
		{  \widehat {F}^1_{n,h}(t):}&= \frac{1}{m_n} \sum_{X_i \in \aleph_1} \ind\{\langle X_i-\bar{X}_n^1, h \rangle \leq t\}, \\
		{  \widehat {F}^2_{n,h,Q}(t):}&= \frac{1}{m_n} \sum_{X_i \in \aleph_2} \ind\{\langle Q(X_i-\bar{X}_n^2), h \rangle \leq t\}, \\
		\widehat  g_{n,h}(Q) &=  \left\| {  \widehat {F}^1_{n,h}}- {  \widehat {F}^2_{n,h,Q}}
		\right\|_\infty. 
	\end{align}
	
	To define the test statistic for \eqref{hs}, we use Proposition~\ref{prop:commute}, which shows that the mirror subspace of any symmetry, if one exists, must be spanned by eigenvectors of \(\Sigma\). Let \(\widehat u_{i,n}^3\) be the estimator, computed from \(\aleph_3\) only, of the eigenvector of \(\Sigma\) associated with \(\lambda_i\), as defined in \eqref{eq:principal_component}, and set \(\widehat \QS_{S}^3=2\sum_{i\in S}\widehat u_{i,n}^3(\widehat u_{i,n}^3)^\top-I_d\). For each \(S\in\mathcal S\), we consider the candidate-specific statistic $T_{n,S}(h)=\sqrt n\,\widehat  g_{n,h}(\widehat \QS_{S}^3)$. Its null distribution is approximated by bootstrap, which yields the corresponding candidate-specific bootstrap \(p\)-value.

	\section{Main theoretical results}\label{sec:mainth}
	
	This section provides the asymptotic distribution of our split-sample statistic \(\sqrt{n}\,\widehat g_{n,h}(\widehat \QS_{S}^3)\) under \(H_0\), and sets the stage for the bootstrap validity results proved later on.
	
	The limit distribution of \(\sqrt{n}\,\widehat g_{n,h}(\widehat \QS_{S}^3)\) depends on unknown features of \(P\) (projection densities, covariance structure, and the eigenstructure driving the asymptotics of the principal component estimators). Therefore, to implement the test we approximate critical values by bootstrap.
	
	To prove the consistency of this bootstrap approximation, it is natural (and technically convenient) to embed both the original sampling scheme and the bootstrap world into a single framework in which the underlying distribution may depend on \(n\). This is precisely the triangular-array setting of \cite{romano1988}, which allows us to apply Proposition~A.1 therein.
	
	More precisely, we consider a sequence of laws \(\{P_n\}_{n\ge1}\) on \(\mathbb R^d\) and, for every \(n\in\mathbb N\), an i.i.d. sample \(\{X_{n,1},\ldots,X_{n,n}\}\) taken from \(P_n\). The fixed-law case is recovered by setting \(P_n\equiv P\). In the methodological sections, we denote the original i.i.d.\ sample by \(X_1,\dots,X_n\). In the triangular-array arguments developed below, we write \(\{X_{n,1},\dots,X_{n,n}\}\) for the \(n\)-th row sampled from \(P_n\). In the smoothed bootstrap, the conditional resampling distribution is itself \(n\)-dependent (e.g., a kernel-smoothed version of the symmetrized empirical law), and it is precisely here that smoothness assumptions on \(P_n\) become essential. Throughout, let \(X_{n,1}\sim P_n\) and \(X\sim P\), and write \(\mu_n=\mathbb E[X_{n,1}]\).
	
	The next assumptions collect the regularity conditions needed for the three main ingredients of the argument: weak convergence of the centered empirical processes, linearization of the empirical principal components, and differentiability of the reflection map under the smoothed bootstrap laws.
	
	\begin{named}{\textbf{(T1)--(T4), (T2$'$)}}\label{assum:T1-T5}
		\begin{enumerate}
			\item[]
			\item[(T1)]\label{assum:T1}  $\sup_n \mathbb{E}\|X_{n,1}\|^{4+\eta}<\infty$ for some $\eta>0$.
			\item[(T2)] 
			For every \(a\in\mathbb S^{d-1}\), let \(f_{n,a}\) denote the density of \(a^\top(X_{n,1}-\mu_n)\) under \(P_n\), and let \(f_a\) denote the density of \(a^\top(X-\mu)\). Assume that
$K_0:=\sup_{n\ge1}\sup_{a\in\mathbb S^{d-1}}\|f_{n,a}\|_\infty<\infty,$ that there exists a deterministic sequence $K_{1,n}=o(\sqrt n)$ such that $\sup_{a\in\mathbb S^{d-1}}\|f'_{n,a}\|_\infty\le K_{1,n}$, and that $\sup_{a\in\mathbb S^{d-1}}\|f_a\|_\infty<\infty$ and $\sup_{a\in\mathbb S^{d-1}}\|f_{n,a}-f_a\|_\infty\to0.$
			\item[(T2$'$)]\label{assum:T2prime}
			The projected densities are Lipschitz in the direction, uniformly in the level: there exists a deterministic sequence \(L_n=o(\sqrt n)\) such that \(\sup_{b\in\mathbb R}|f_{n,a}(b)-f_{n,a'}(b)|\le L_n\|a-a'\|\) for all \(a,a'\in\mathbb S^{d-1}\).
			\item[(T3)]  Let $f_n$ and $f$ denote the densities of $P_n$ and $P$. For each $n$, assume $f_n \in C^2(\mathbb{R}^d)$, $f \in C^1(\mathbb{R}^d)$, $\int_{\mathbb R^d}(1+\|x\|)\|\nabla f(x)\| dx<\infty$, and $\int_{\mathbb R^d}(1+\|x\|)\|\nabla f_n(x)-\nabla f(x)\| dx \to 0$; and there exists a deterministic sequence \(A_n=o(\sqrt n)\) with $\int_{\mathbb R^d}(1+\|x\|^2)\|Hf_n(x)\|_{\mathrm{op}} dx \le A_n$ for all $n$.
			\item[(T4)]\label{assum:T4}
			Let $\Sigma_n= \cov(X_{n,1})$ (the covariance matrix of $P_n$) and assume $\|\Sigma_n-\Sigma\|_{\mathrm{op}}\to0$ for some positive definite matrix $\Sigma$ whose eigenvalues are simple: $\lambda_1>\cdots>\lambda_d>0$. Let $u_1,\dots,u_d$ be the associated unit eigenvectors of $\Sigma$.
		\end{enumerate}
	\end{named}
	
	Assumption \textup{(T4)} has a consequence that fixes notation used throughout: since $\Sigma_n\to\Sigma$ and $\Sigma$ has simple spectrum, $\Sigma_n$ has simple spectrum for all sufficiently large $n$; for such $n$, we let $u_{1,n},\dots,u_{d,n}$ denote the unit eigenvectors associated with its ordered eigenvalues, with signs fixed by $\langle u_{j,n},u_j\rangle\ge0$. The finitely many remaining indices may be completed by arbitrary orthonormal eigenbases, which do not affect any asymptotic statement.
	
	Conditions \textup{(T2)} and \textup{(T2$'$)} are not alternative formulations of the same requirement: \textup{(T2$'$)} supplements \textup{(T2)}, and is used only to control the perturbation of the projection direction that arises when the estimated reflection \(\widehat\QS_S^3\) replaces \(\QS_{S,n}\) inside the empirical process (see the proof of Theorem~\ref{teo:mainth1-TA} in the Supplementary Material), whereas several auxiliary results (for instance the weak-convergence lemma for the two centered split empirical processes) require \textup{(T2)} but not \textup{(T2$'$)}. For this reason the two conditions are stated, and cited, separately.
	
	To prepare the asymptotic analysis, we next introduce some empirical-process notation that will be used throughout this section. Since our statistic is defined through suprema of differences between projected distribution functions, the natural indexing class is the family of half-spaces in \(\mathbb R^d\), and, for a fixed direction \(h\), its corresponding one-dimensional subfamily. This framework is convenient both for the weak-convergence result below and for the bootstrap argument developed later. In particular, the main theorem is stated in a triangular-array form because this is precisely the setting needed for the smoothed bootstrap, where the bootstrap sample is generated from a data-dependent probability measure \(P_n\).
	
	Let $\mathcal F=\{\ga_{a,b}(x)=\mathbf 1\{a^\top x\le b\}: a\in\mathbb S^{d-1},\ b\in\mathbb R\},$ and, for \(h\in\mathbb S^{d-1}\), \(\mathcal F_h= \{\ga_{h,b}(x): b\in\mathbb R\}.\) Then, \(\mathcal F = \cup_h \mathcal F_h\). Given two probability distributions $P$ and $P'$ we denote \(d_{\mathcal F}(P,P')=\sup_{\ga\in\mathcal F}|P\ga - P'\ga|=\|P-P'\|_\mathcal{F}\).
	For any bounded map $z:\mathcal F\to\mathbb R$ and any subclass $\mathcal G\subset\mathcal F$, we write $\|z\|_{\mathcal G}=\sup_{\ga\in\mathcal G}|z(\ga)|$; a probability measure $P$ is identified with the map $\ga\mapsto P\ga$.
	
	All stochastic-process limits are taken in the space $\ell^\infty(\mathcal F)$ endowed with the $\sigma$-field generated by open balls; suprema are understood in outer probability whenever measurability could be an issue. As is standard, one may also avoid measurability concerns by working with a countable dense subfamily.
	
	In what follows $\{P_n\}_{n\geq 1}$ is a sequence of probability measures on $\mathbb{R}^d$ such that $d_\mathcal{F}(P_n,P)\to 0$. Theorem~\ref{teo:mainth1-TA} is our main asymptotic result: it establishes the weak convergence of the split statistic in the triangular-array framework (which is the one needed for the smoothed bootstrap arguments below). In particular, in the special case \(P_n\equiv P\), the theorem yields the asymptotic distribution of \(\sqrt n\,\widehat g_{n,h}(\widehat \QS_{S}^3)\) under \(H_0\).
	
	For an orthogonal matrix $Q$ and $\ga\in\mathcal F$ we write $(Q\ga)(x):=\ga(Qx)$; since $(Q\ga_{a,b})(x)=\mathbf 1\{a^\top Qx\le b\}=\ga_{Q^\top a,b}(x)$, and $\|Q^\top a\|=1$, the map $\ga\mapsto Q\ga$ is a bijection of $\mathcal F$ onto itself (for the symmetric reflections used throughout, $Q^\top=Q$). Let $P_n^c$ denote the law of $X_{n,1}-\mu_n$, and for $r=1,2$ write $\widehat P_n^{c,r}(\ga)=|\aleph_{n,r}|^{-1}\sum_{j\in\aleph_{n,r}}\ga(X_{n,j}-\bar X_n^r)$, $\ga\in\mathcal F$. The next theorem is the key probabilistic input of the paper: it identifies the weak limit of the split statistic for a fixed candidate reflection and provides the limit law that the bootstrap must reproduce.
	
	\begin{theorem} \label{teo:mainth1-TA}
		Fix $S\in\mathcal S$. Let $\aleph=\{X_{n,1},\dots,X_{n,n}\}$ be an i.i.d. sample of $P_n$, and assume \textup{(T1)}--\textup{(T4)} and \textup{(T2$'$)}. Suppose that, for all sufficiently large $n$, the centered law $P_n^c$ is invariant under the reflection $\QS_{S,n}:=2\sum_{i\in S}u_{i,n}u_{i,n}^\top-I_d$ (equivalently, $P_n$ is invariant under the affine reflection $x\mapsto\mu_n+\QS_{S,n}(x-\mu_n)$). We assume that $d_\mathcal{F}(P_n,P)\to 0$. For $i\in S$, let $\widehat u_{i,n}^3$ be the $i$-th empirical principal component computed only from $\aleph_{n,3}$, with sign chosen so that $\langle \widehat u_{i,n}^3,u_{i,n}\rangle\ge 0$, and set $\widehat\QS_S^3=2\sum_{i\in S}\widehat u_{i,n}^3(\widehat u_{i,n}^3)^\top-I_d$. Let $\widehat  g_{n,h}$ be the statistic defined in Section~\ref{sec:test}, and write $\mathbb T_{n,S}(\ga)=\sqrt n\bigl(\widehat  P_n^{c,1}(\ga)-\widehat  P_n^{c,2}(\widehat\QS_S^3\ga)\bigr)$ for the test process indexed by the class $\mathcal F$ of all half-space indicators. Then $\mathbb T_{n,S}\xrightarrow{\mathcal L}\mathbb L^{S}$ in $\ell^\infty(\mathcal F)$, where $\mathbb L^{S}$ is the centered Gaussian limit \eqref{eq:limit-representation} below, with covariance kernel \eqref{eq:cov-kernel}, obtained with the limit distribution $P$ and the limiting mirror subspace $V_S=\operatorname{span}\{u_i:i\in S\}$ determined by \textup{(T4)}. In particular, since $\sqrt n\,\widehat  g_{n,h}(\widehat\QS_S^3)=\sup_{\ga\in\mathcal F_h}|\mathbb T_{n,S}(\ga)|$ and $z\mapsto\sup_{\ga\in\mathcal F_h}|z(\ga)|$ is continuous on $\ell^\infty(\mathcal F)$, for every fixed $h\in\mathbb{S}^{d-1}$, $\sqrt{n}\,\widehat  g_{n,h}(\widehat\QS_S^3)\xrightarrow{\mathcal L}\|\mathbb L_h^{S}\|_{\mathcal{F}_h}$, where $\mathbb L_h^{S}$ denotes the restriction of $\mathbb L^{S}$ to $\mathcal F_h$.
	\end{theorem}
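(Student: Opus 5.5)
The plan is to decompose the test process into a two-sample empirical-process part, a centering part, and a plug-in part coming from $\widehat\QS_S^3$, and treat each separately. The decomposition uses the null invariance $P_n^c(\ga)=P_n^c(\QS_{S,n}\ga)$ for every $\ga\in\mathcal F$. Write $\widehat P_n^{c,2}(\widehat\QS_S^3\ga)-P_n^c(\ga)$ as
\[
\bigl[\widehat P_n^{c,2}(\widehat\QS_S^3\ga)-P_n^c(\widehat\QS_S^3\ga)\bigr]+\bigl[P_n^c(\widehat\QS_S^3\ga)-P_n^c(\QS_{S,n}\ga)\bigr].
\]
This gives
\[
\mathbb T_{n,S}(\ga)=\sqrt n\bigl(\widehat P_n^{c,1}-P_n^c\bigr)(\ga)-\sqrt n\bigl(\widehat P_n^{c,2}-P_n^c\bigr)(\widehat\QS_S^3\ga)-\sqrt n\bigl(P_n^c(\widehat\QS_S^3\ga)-P_n^c(\QS_{S,n}\ga)\bigr).
\]
The first two terms are the centered split empirical processes. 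The third term is a deterministic functional of $\aleph_{n,3}$ only, and so it is independent of $\aleph_{n,1},\aleph_{n,2}$. Joint convergence then reduces to convergence of each block, with the limit being a sum of independent pieces. This is exactly where the sample splitting pays off.

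\textbf{Step 1 (centered empirical processes).} I would show that $\sqrt{m_n}(\widehat P_n^{c,r}-P_n^c)$, $r=1,2$, converge jointly in $\ell^\infty(\mathcal F)$ to independent Gaussian processes. Each limit is a $P^c$-Brownian bridge plus a centering correction of the form $\langle \nabla_b,\ \bar X^r-\mu_n\rangle$. The correction comes from linearizing $\ga_{a,b}(x-\bar X)$: $P_n^c\ga_{a,b+a^\top(\bar X-\mu_n)}-P_n^c\ga_{a,b}\approx f_{n,a}(b)\,a^\top(\bar X-\mu_n)$. The remainder is $O(K_{1,n}\|\bar X-\mu_n\|^2)=o_P(n^{-1/2})$ by (T2), and $f_{n,a}\to f_a$ uniformly by (T2). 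For the triangular-array part I would use $\mathcal F$ being VC together with a Lindeberg-type uniform CLT for classes of triangular arrays (van der Vaart--Wellner Thm.~2.11.22/2.11.23). The covariance convergence follows from $d_{\mathcal F}(P_n,P)\to0$ and (T1). Since $\sqrt n/\sqrt{m_n}\to\sqrt3$, this produces the factor $\sqrt3$.

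\textbf{Step 2 (random index in the second process).} I would replace $\widehat\QS_S^3\ga$ by $\QS_{S,n}\ga$ in the second term. Conditionally on $\aleph_{n,3}$, the index map is a fixed bijection of $\mathcal F$. It is also $L_2(P_n)$-close to $\QS_{S,n}\ga$, since $P_n|\ga_{a,b}-\ga_{a',b}|\lesssim K_0\|a-a'\|(1+\mathbb E\|X\|)$, using (T2), (T2$'$) and $\|\widehat\QS_S^3-\QS_{S,n}\|_{\op}=O_P(n^{-1/2})$. Asymptotic equicontinuity from Step~1 then makes the difference $o_P(1)$ uniformly in $\ga$.

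\textbf{Step 3 (plug-in term, the main obstacle).} Here I must show
\[
\sqrt n\bigl(P_n^c(\widehat\QS_S^3\ga)-P_n^c(\QS_{S,n}\ga)\bigr)=\bigl\langle D_n(\ga),\sqrt n(\widehat\QS_S^3-\QS_{S,n})\bigr\rangle+o_P(1)
\]
uniformly in $\ga\in\mathcal F$. The derivative is $D_n(\ga_{a,b})=-\int \ind\{\ldots\}$; via the divergence theorem it equals $\int \ga_{a,b}(Qx)\,\nabla f_n(x)x^\top dx$ type integrals. Differentiating $Q\mapsto\int\ind\{a^\top Qx\le b\}f_n(x)dx$ after the change of variables $y=Qx$ gives a derivative expressed through $\nabla f_n$. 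The second-order remainder is controlled by $\int(1+\|x\|^2)\|Hf_n\|_{\op}\le A_n=o(\sqrt n)$ times $\|\widehat\QS-\QS\|^2=O_P(1/n)$. Convergence $D_n\to D$ uniformly in $\ga$ follows from (T3). Next, $\sqrt n(\widehat\QS_S^3-\QS_{S,n})$ must be linearized using the classical eigenvector perturbation
\[
\widehat u_i-u_{i,n}=\sum_{j\ne i}\frac{u_{j,n}^\top(\widehat\Sigma^3-\Sigma_n)u_{i,n}}{\lambda_{i,n}-\lambda_{j,n}}u_{j,n}+O_P(\|\widehat\Sigma^3-\Sigma_n\|^2).
\]
The gaps stay bounded away from zero by (T4). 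The triangular-array CLT for $\sqrt{n-2m_n}(\widehat\Sigma^3-\Sigma_n)$ follows from (T1) via Lyapunov with $4+\eta$ moments. This yields a Gaussian matrix limit and gives the third, finite-dimensional Gaussian component of $\mathbb L^S$.

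Combining the three independent limits by Slutsky and independence gives $\mathbb T_{n,S}\xrightarrow{\mathcal L}\mathbb L^S$, with the covariance kernel obtained by summing the three contributions at the limits $P$ and $V_S$. The statement for $h$ then follows from the continuous mapping theorem. I expect Step~3 to be the main obstacle, specifically obtaining the uniform-in-$\ga$ differentiability with only $A_n=o(\sqrt n)$ Hessian control along the sequence $P_n$.
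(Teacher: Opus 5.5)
Your proposal is correct and follows the paper's proof essentially step for step: your middle term is the paper's $\mathbf C_n+\mathbf D_n$, Step~2 is its proof that $\|\mathbf D_n\|_{\mathcal F}\xrightarrow{\mathbb P}0$, and Step~3 is its treatment of $\mathbf B_n$ via the Fr\'echet expansion of Lemma~\ref{lem:dif-QRu-TA} (remainder $C_n\sqrt n\|V\|^2$ with $C_n=o(\sqrt n)$) combined with the eigenvector linearization of Lemma~\ref{lem:eigenvector-linearization}.

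Three details need tightening.
\begin{itemize}
\item \textup{(T2$'$)} is really needed for the centering drift $f_{n,a}(b)\,a^\top\sqrt n(\bar X_n^2-\mu_n)$, compared at the directions $\QS_{S,n}a$ and $\widehat\QS_S^3a$. This is where your rate $\|\widehat\QS_S^3-\QS_{S,n}\|_{\op}=O_{\mathbb P}(n^{-1/2})$ meets $L_n=o(\sqrt n)$.
\item The $L_2(P_n)$-closeness of the two indices follows from \textup{(T2)} alone, via a strip bound. That bound gives a fractional power of $\|\widehat\QS_S^3-\QS_{S,n}\|_{\op}$, not the linear bound you wrote, but any rate tending to zero suffices here.
\item To obtain $-\mathbb L^{(2)}$ rather than $-\mathbb L^{(2)}\circ\QS_S$ in the limit, you must invoke the null invariance of $P_n^c$ under $\QS_{S,n}$. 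It makes $\sqrt n(\widehat P_n^{c,2}-P_n^c)(\QS_{S,n}\,\cdot)$ equal in law to $\sqrt n(\widehat P_n^{c,2}-P_n^c)$.
\end{itemize}
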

	
	The sign conventions \(\langle u_{j,n},u_j\rangle\ge0\) and \(\langle\widehat u_{i,n}^3,u_{i,n}\rangle\ge0\) are immaterial for the reflections themselves, which depend on the eigenvectors only through the projections \(u_iu_i^\top\); they are needed only so that the differences \(\widehat u_{i,n}^3-u_{i,n}\), and hence the eigenvector linearization carried out in the Supplementary Material, are well defined.
	
	The proof is given in the Supplementary Material; here we show the structure of the limit, which is what makes the bootstrap tractable. Writing \(\QS_S=2\sum_{i\in S}u_iu_i^\top-I_d\) for the limiting reflection, the process admits the representation
	\begin{equation}\label{eq:limit-representation}
		\mathbb L^{S}(\ga)=\mathbb L^{(1)}(\ga)-\mathbb L^{(2)}(\ga)-\sum_{i\in S}G_{u_i}^{S}(\ga)^\top Z_i^3,
		\qquad \ga\in\mathcal F,
	\end{equation}
	where \(\mathbb L^{(1)},\mathbb L^{(2)}\) are the independent Gaussian limits of the two centered split empirical processes, \((Z_i^3)_{i\in S}\) is a jointly Gaussian vector (the limit of the principal-component linearizations), independent of \((\mathbb L^{(1)},\mathbb L^{(2)})\), and
	\begin{equation}\label{eq:G-general}
		G_{u_i}^{S}(\ga)=2\int_{\mathbb R^d}\ga(x)\,\bigl((u_i^\top x)I_d+xu_i^\top\bigr)\,\nabla f(\QS_S x+\mu)\,dx,\qquad i\in S.
	\end{equation}
	The sum over \(i\in S\) in \eqref{eq:limit-representation} is the linearization term produced by the estimation of the mirror frame, with one summand for each estimated principal direction; when \(|S|=1\) it collapses to a single summand, and \eqref{eq:limit-representation}--\eqref{eq:G-general} recover the axial-symmetry formulas of Section~\ref{sec:axial}.
	
	The covariance kernel of \(\mathbb L_h^{S}\) is obtained directly from \eqref{eq:limit-representation}. Writing \(s(\ga_j)=f_h(b_j)h\) and \(m(\ga_j)=\mathbb E[(X-\mu)(\ga_j(X-\mu)-P^c\ga_j)]\) for \(\ga_j=\ga_{h,b_j}\in\mathcal F_h\), and recalling that
\(\psi_i(x)=\sum_{\ell\ne i}\frac{(u_\ell^\top(x-\mu))(u_i^\top(x-\mu))}{\lambda_i-\lambda_\ell}u_\ell\),
using that \(\mathbb L^{(1)},\mathbb L^{(2)},(Z_i^3)_{i\in S}\) are mutually independent with \(\mathbb L^{(r)}(\ga)=\sqrt3(\mathbb B^{r,c}(\ga)+s(\ga)^\top Z^{r})\), we obtain
	\begin{multline}\label{eq:cov-kernel}
		\mathrm{Cov}\big(\mathbb L_h^{S}(\ga_1),\mathbb L_h^{S}(\ga_2)\big)
		=
		6\Big(P^c(\ga_1\ga_2)-P^c\ga_1 P^c\ga_2
		+s(\ga_1)^\top \Sigma s(\ga_2)\\
		+s(\ga_1)^\top m(\ga_2)
		+s(\ga_2)^\top m(\ga_1)\Big)\\
		+3\sum_{i,j\in S} G_{u_i}^{S}(\ga_1)^\top P(\psi_i\psi_j^\top) G_{u_j}^{S}(\ga_2),
	\end{multline}
	where the factor \(6\) reflects the two independent split processes (each of asymptotic size \(n/3\), entering with opposite signs), and the double sum collects the correlated contributions of the mirror-direction linearizations. For \(|S|=1\) this reduces to the single-direction kernel of Section~\ref{sec:axial}. The derivation is given in the Supplementary Material.
	
	Because Theorem~\ref{teo:mainth1-TA} gives convergence of the whole process \(\mathbb T_{n,S}\) in \(\ell^\infty(\mathcal F)\), any fixed finite collection of projection directions is covered by the continuous mapping theorem, with all cross-direction dependence (in particular the common eigenvector shift \(\sum_{i\in S}G_{u_i}^{S}(\cdot)^\top Z_i^3\)) already encoded in the single limit \(\mathbb L^{S}\).
	
	\begin{corollary}\label{cor:finite-projections}
		Fix \(k\ge1\) and directions \(h_1,\dots,h_k\in\mathbb S^{d-1}\), and define the aggregated statistic \(T_{n,S}^{(k)}:=\frac1k\sum_{\ell=1}^k\sqrt n\,\widehat g_{n,h_\ell}(\widehat\QS_S^3)=\frac1k\sum_{\ell=1}^k\sup_{\ga\in\mathcal F_{h_\ell}}|\mathbb T_{n,S}(\ga)|\). Under the assumptions of Theorem~\ref{teo:mainth1-TA}, \(T_{n,S}^{(k)}\xrightarrow{\mathcal L}M_S^{(k)}:=\frac1k\sum_{\ell=1}^k\|\mathbb L^{S}\|_{\mathcal F_{h_\ell}}\), and the distribution function of \(M_S^{(k)}\) is continuous. Consequently, the bootstrap and \(p\)-value arguments of Theorem~\ref{thm:bootstrap} apply verbatim to the aggregated bootstrap statistic \(T_{n,S}^{(k),\dagger}=\frac1k\sum_{\ell=1}^k\sup_{\ga\in\mathcal F_{h_\ell}}|\mathbb T_{n,S}^{\dagger}(\ga)|\) and the aggregated \(p\)-value \(p_{n,S}^{(k)}=\mathbb P^\dagger(T_{n,S}^{(k),\dagger}\ge T_{n,S}^{(k)})\) (a single bootstrap \(p\)-value for the aggregate, not an average of \(k\) separate \(p\)-values). If \(h_1,\dots,h_k\) are drawn independently of the data, once, and kept fixed for the observed statistic and all bootstrap replicates, then the conclusions hold conditionally on their realized values, and hence also unconditionally.
	\end{corollary}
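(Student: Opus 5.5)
The convergence claim follows from Theorem~\ref{teo:mainth1-TA} and the continuous mapping theorem. The map $\Phi:\ell^\infty(\mathcal F)\to\mathbb R$, $\Phi(z)=\frac1k\sum_{\ell=1}^k\sup_{\ga\in\mathcal F_{h_\ell}}|z(\ga)|$, is Lipschitz with constant $1$ for the sup-norm, because each $z\mapsto\|z\|_{\mathcal F_{h_\ell}}$ is $1$-Lipschitz and $\mathcal F_{h_\ell}\subset\mathcal F$. Since $T_{n,S}^{(k)}=\Phi(\mathbb T_{n,S})$ and $\mathbb T_{n,S}\xrightarrow{\mathcal L}\mathbb L^S$ in $\ell^\infty(\mathcal F)$, we get $T^{(k)}_{n,S}\xrightarrow{\mathcal L}\Phi(\mathbb L^S)=M_S^{(k)}$. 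Cross-direction dependence needs no separate treatment, because it is already contained in the single joint limit $\mathbb L^S$.

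For continuity of the distribution function of $M_S^{(k)}$, I would argue as follows. $\mathbb L^S$ is a centered Gaussian process, and $M_S^{(k)}$ is a convex, nonnegative, $1$-homogeneous functional (a norm-type seminorm) of it. Tsirelson's theorem (or the Lifshits/Davydov–Lifshits–Smorodina results on suprema of Gaussian processes) then shows that the law of such a functional is absolutely continuous on $(r_0,\infty)$, where $r_0$ is the left endpoint of its support; the only possible atom sits at $r_0$. To use this, I would represent $\mathbb L^S$ on a countable dense subfamily of $\bigcup_\ell\mathcal F_{h_\ell}$ so that $M_S^{(k)}$ is a countable supremum of linear functionals, $\sup_{\ga,\sigma}\frac1k\sum_\ell \sigma_\ell\mathbb L^S(\ga_\ell)$ with $\sigma_\ell\in\{\pm1\}$. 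This puts it in the framework of suprema of Gaussian sequences.

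The remaining point, and the main obstacle, is excluding an atom at $r_0$. Since the process is centered, $r_0=0$, so I must show $\mathbb P(M_S^{(k)}=0)=0$. It suffices to find one $\ga\in\mathcal F_{h_1}$ with $\var(\mathbb L^S(\ga))>0$, since then $\mathbb P(M_S^{(k)}=0)\le\mathbb P(\mathbb L^S(\ga)=0)=0$. From \eqref{eq:cov-kernel}, the variance is at least the contribution of the independent piece $\mathbb L^{(1)}-\mathbb L^{(2)}$, namely $6\,\var\bigl(\ga(X-\mu)+s(\ga)^\top(X-\mu)\bigr)$, because the $Z^3$ term is independent and adds a nonnegative amount. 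For $\ga=\ga_{h,b}$ this is the variance of $\ind\{h^\top Y\le b\}+f_h(b)h^\top Y$ with $Y=X-\mu$. That quantity cannot vanish for every $b$: otherwise the indicator would be a.s.\ affine in $h^\top Y$, which is impossible because $h^\top Y$ has a bounded density (T2) and hence a nondegenerate continuous law. So some $b$ gives positive variance.

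The bootstrap part is then inherited. The proof of Theorem~\ref{thm:bootstrap} uses only two ingredients: weak convergence of the process under the triangular array $P_n$ (which Theorem~\ref{teo:mainth1-TA} supplies for the bootstrap law), and a continuous limit distribution function, which Pólya-type uniform convergence and the $p$-value argument require. Both have now been verified for $\Phi$, so those steps go through with $\Phi$ replacing $\|\cdot\|_{\mathcal F_h}$. If the $h_\ell$ are random but independent of the data, the argument above applies for every fixed realization. Integrating the conditional statements with dominated convergence (probabilities are bounded by $1$) gives the unconditional conclusions.
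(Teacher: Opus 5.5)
Your proposal is correct and follows essentially the same route as the paper. The steps match: a $1$-Lipschitz functional with the continuous mapping theorem; a Gaussian anti-concentration result leaving at most an atom at $0$; exclusion of that atom via one $\ga\in\mathcal F_{h_1}$ with $\var\bigl(\ind\{h_1^\top Y\le b\}+f_{h_1}(b)h_1^\top Y\bigr)>0$; and verbatim transfer of the bootstrap and conditioning arguments. The only difference is the cited tool. You use Tsirelson's theorem on suprema of Gaussian processes, which is why you also need the left endpoint of the support to be $0$. The paper instead applies the Hoffmann-J{\o}rgensen--Shepp--Dudley result for measurable seminorms of Gaussian elements, which rules out atoms on $(0,\infty)$ directly.
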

	
	\begin{remark}\label{rem:global-test}
		Under Assumption~\ref{assum:H1}, the candidate symmetries reduce to the reflections \(\{\QS_S:S\in\mathcal S\}\). For each \(S\in\mathcal S\), consider the null hypothesis
		$H_{0,S}:\ X-\mathbb E[X]\overset{\mathcal L}{=}\QS_S(X-\mathbb E[X]).$ Then the global null hypothesis in \eqref{hs} can be written as \(H_0=\cup_{S\in\mathcal S} H_{0,S}\). Accordingly, the natural global test rejects \(H_0\) only when all candidate null hypotheses are rejected. If, for each \(S\in\mathcal S\), \(p_{n,S}\) is an asymptotically valid \(p\)-value for testing \(H_{0,S}\), in the sense that
		$\limsup_{n\to\infty}\mathbb P\bigl(p_{n,S}\le \alpha\bigr)\le \alpha$ whenever $H_{0,S}\text{ holds},$ then
		\(p_n^{\mathrm{glob}}=\max_{S\in\mathcal S} p_{n,S}\) is an asymptotically valid \(p\)-value for the global null \(H_0\). Indeed, under \(H_0\) there exists at least one \(S_0\in\mathcal S\) such that \(H_{0,S_0}\) holds, and therefore
		\[
		\mathbb P\bigl(p_n^{\mathrm{glob}}\le \alpha\bigr)
		=
		\mathbb P\Bigl(\max_{S\in\mathcal S} p_{n,S}\le \alpha\Bigr)
		\le
		\mathbb P\bigl(p_{n,S_0}\le \alpha\bigr).
		\]
		Taking \(\limsup\) yields \(\limsup_{n\to\infty}\mathbb P(p_n^{\mathrm{glob}}\le \alpha)\le \alpha\). Hence no multiple-testing correction is required for asymptotic validity of the global test, regardless of the cardinality of \(\mathcal S\). However, the intersection structure (reject only if all candidates are rejected) can be conservative in finite samples, especially when the candidate family \(\mathcal S\) is large.
	\end{remark}
	
	\paragraph*{Algorithm 1: Testing orthogonal symmetry about an unspecified subspace}\label{alg:algoritmo}
	Select \(h\in\mathbb S^{d-1}\) at random, independently of the data, and keep it fixed. Fix the admissible family \(\mathcal S\).
	\begin{enumerate}
		\item Randomly split \(\aleph=\{X_{1},\dots,X_{n}\}\) into three balanced subsamples \(\aleph_1\cup\aleph_2\cup\aleph_3.\)
		\item Using only \(\aleph_3\), compute the empirical covariance matrix and its empirical eigenvectors \(\widehat u_{1,n}^3,\dots,\widehat u_{d,n}^3\).
		\item For each \(S\in\mathcal S\), form \(\widehat\QS_S^3=2\sum_{i\in S}\widehat u_{i,n}^3(\widehat u_{i,n}^3)^\top-I_d\) and compute the candidate-specific statistic \(T_{n,S}(h)=\sqrt n\,\widehat g_{n,h}(\widehat\QS_S^3)\), where \(\widehat g_{n,h}(Q)\) is defined in \eqref{empirical} from the independent subsamples \(\aleph_1,\aleph_2\).
		\item For each \(S\in\mathcal S\), use the bootstrap procedure described below to approximate the null distribution of \(T_{n,S}(h)\) under \(H_{0,S}\), and compute the corresponding bootstrap \(p\)-value \(p_{n,S}\).
		\item Form the global \(p\)-value \(p_n^{\mathrm{glob}}=\max_{S\in\mathcal S} p_{n,S}\), and reject the global null hypothesis in \eqref{hs} at level \(\alpha\) whenever \(p_n^{\mathrm{glob}}\le \alpha.\)
	\end{enumerate}

	\subsection{Bootstrap calibration and asymptotic validity}\label{subsec:bootstrap-validity-romano}
	
	We now describe the bootstrap calibration step in Algorithm~1 for a fixed candidate reflection \(S\in\mathcal S\). Assume that \(H_{0,S}\) holds, that is, the centered law \(P^c\) is invariant under \(\QS_S\), and let \(X_{1},\dots,X_{n}\) be i.i.d.\ from \(P\). Our goal is to approximate the null distribution of \(T_{n,S}(h)=\sqrt n\,\widehat g_{n,h}(\widehat\QS_S^3)\) and thereby construct the candidate-specific bootstrap \(p\)-value used in Step~4 of Algorithm~1.
	
	To this end, we construct from the original sample a data-dependent probability measure \(P_n\) on \(\mathbb R^d\) such that \(P_n\) satisfies the null invariance condition, also satisfies \textup{(T1)}--\textup{(T4)} and \textup{(T2$'$)}, and converges to \(P\) in the \(d_{\mathcal F}\)-metric as \(n\to\infty\). To avoid ambiguity, note that in the abstract result \(P_n\) denotes a generic bootstrap law, whereas in the present application the bootstrap distribution depends on \(S\), because it is symmetrized about the candidate reflection \(\widehat\QS_S^3\); thus, for each fixed \(S\), \(P_n\) should be read as \(P_{n,S}\). The construction of \(P_n\) is as follows. First, partition the original sample into three subsets. Using one third of the sample, estimate the covariance matrix and the candidate reflection \(\widehat\QS_S^3\). Let \(I_n\) denote the indices of the remaining two thirds of the sample, and let \(\widehat\mu_n\) be their empirical mean. Define, for $m\in\mathbb R^d$ and $Q\in\Od$, the affine reflection $A_{m,Q}(x):=m+Q(x-m)$; the symmetrization uses $A_{\widehat\mu_n,\widehat\QS_S^3}$, and we set
	\[
	\widehat P_n^{\mathrm{sym}}
	=
	\frac1{2|I_n|}
	\sum_{j\in I_n}
	\left\{
	\delta_{X_j}+
	\delta_{A_{\widehat\mu_n,\widehat\QS_S^3}(X_j)}
	\right\}.
	\]
	Since \(\widehat\QS_S^3\) is a symmetric orthogonal involution, \(A_{\widehat\mu_n,\widehat\QS_S^3}\) is its own inverse and \(\widehat P_n^{\mathrm{sym}}\) is exactly invariant under the affine reflection \(A_{\widehat\mu_n,\widehat\QS_S^3}\). Finally, define \(P_n=\widehat P_n^{\mathrm{sym}}*K_{b_n}\), where \(K\) is an orthogonally invariant (radial) kernel, i.e.\ \(K(Ox)=K(x)\) for all \(O\in\Od\). Orthogonal invariance of \(K\) is precisely what makes the convolution preserve the reflection symmetry: if \(Y\sim\widehat P_n^{\mathrm{sym}}\) and \(\xi\sim K\) are independent, then \(A_{\widehat\mu_n,\widehat\QS_S^3}(Y+b_n\xi)=A_{\widehat\mu_n,\widehat\QS_S^3}(Y)+b_n\widehat\QS_S^3\xi\), and both \(A_{\widehat\mu_n,\widehat\QS_S^3}(Y)\sim\widehat P_n^{\mathrm{sym}}\) and \(\widehat\QS_S^3\xi\sim K\), so \(P_n\) inherits the symmetry (merely fixing \(\cov(K)=I_d\) would not suffice). The convolution also provides the regularity required in \textup{(T2)}--\textup{(T3)}; see Remark~\ref{remsmoothPN} below for details.
	
	Now generate an i.i.d.\ bootstrap sample \(X_1^\dagger,\dots,X_n^\dagger\) from \(P_n\). Split it into three balanced blocks \(\aleph_{n,1}^\dagger\cup\aleph_{n,2}^\dagger\cup\aleph_{n,3}^\dagger,\) and let \(\bar X_{n,r}^{\dagger}\) be the sample mean over \(\aleph_{n,r}^\dagger\), \(r=1,2\). For \(i\in S\), let \(\widehat u_{i,n}^{3,\dagger}\) be the \(i\)-th empirical principal component computed from \(\aleph_{n,3}^\dagger\), with sign chosen so that \(\langle \widehat u_{i,n}^{3,\dagger},\widehat u_{i,n}^3\rangle\ge 0\), and set \(\widehat\QS_S^{3,\dagger}=2\sum_{i\in S}\widehat u_{i,n}^{3,\dagger}(\widehat u_{i,n}^{3,\dagger})^\top-I_d\).
	
	For \(r=1,2\), define
	\[
	\widehat  P_n^{c,r,\dagger}(\ga)
	=
	\frac{1}{|\aleph_{n,r}^\dagger|}
	\sum_{X_j^\dagger\in\aleph_{n,r}^\dagger}
	\ga\bigl(X_j^\dagger-\bar X_{n,r}^{\dagger}\bigr),
	\qquad \ga\in\mathcal F,
	\]
	and the bootstrap process $\mathbb T_{n,S}^\dagger(\ga)=\sqrt n(\widehat  P_n^{c,1,\dagger}(\ga)-\widehat  P_n^{c,2,\dagger}(\widehat\QS_S^{3,\dagger}\ga)),$ $\ga\in\mathcal F.$ For a randomly chosen fixed \(h\in\mathbb S^{d-1}\), define the associated bootstrap statistic by
	\[
	T_{n,S}^\dagger(h)
	=
	\sqrt n\,\widehat  g_{n,h}^\dagger(\widehat\QS_S^{3,\dagger})
	=
	\sup_{\ga\in\mathcal F_h}\bigl|\mathbb T_{n,S}^\dagger(\ga)\bigr|.
	\]
	
	{\sloppy Let \(J_{n,S}(x)=\mathbb P(T_{n,S}(h)\le x),\) \(J(x)=\mathbb P(\|\mathbb L_h^{S}\|_{\mathcal F_h}\le x),\) and $J_{n,S}^\dagger(x)=\mathbb P \left(T_{n,S}^\dagger(h)\le x \mid X_1,\dots,X_n\right).$ Let $c_{1-\alpha}:=\inf\{x:J(x)\ge1-\alpha\}$ and \(d_{n,S}^\dagger(1-\alpha)=\inf\{x:J_{n,S}^\dagger(x)\ge 1-\alpha\}\). The corresponding bootstrap \(p\)-value is defined by\par}
	\[
	p_{n,S}
	=
	\mathbb P \left(
	T_{n,S}^\dagger(h)\ge T_{n,S}(h)
	\mid X_1,\dots,X_n
	\right)
	=
	1-J_{n,S}^\dagger\bigl(T_{n,S}(h)-\bigr),
	\]
	where \(J_{n,S}^\dagger(x-)=\lim_{y\uparrow x}J_{n,S}^\dagger(y)\).
	
	In order to apply Proposition~A.1 of \cite{romano1988}, it is convenient to single out the class of deterministic sequences of laws satisfying the conditions required by our triangular-array limit theorem.
	
	\begin{definition}
		For a fixed \(S\in\mathcal S\), let \(\mathcal C_S(P)\) denote the class of deterministic sequences \((\nu_n)_{n\ge1}\) of probability measures on \(\mathbb R^d\) such that: (i) \(d_{\mathcal F}(\nu_n,P)\to0\); (ii) for all sufficiently large \(n\), the centered law \(\nu_n^c\) is invariant under the reflection \(\QS_{S,n}(\nu_n):=2\sum_{i\in S}u_{i,n}(\nu_n)u_{i,n}(\nu_n)^\top-I_d\) determined by the ordered eigenvectors of \(\Sigma(\nu_n)\); (iii) \((\nu_n)\) satisfies \textup{(T1)}--\textup{(T4)} and \textup{(T2$'$)}.
	\end{definition}
	
	\begin{theorem}\label{thm:bootstrap}
		Fix \(S\in\mathcal S\). Assume that the fixed law \(P\) satisfies \(H_{0,S}\) and the fixed-law versions (\(P_n\equiv P\)) of \textup{(T1)}--\textup{(T4)} and \textup{(T2$'$)}, so that Theorem~\ref{teo:mainth1-TA} applies to the original statistic. Let \(P_n\) be a data-dependent probability measure on \(\mathbb R^d\), measurable with respect to \(X_1,\dots,X_n\). Assume that there exist events \(\Omega_n\in\sigma(X_1,\dots,X_n)\) and random probability measures \(\widetilde P_n\), measurable with respect to \(X_1,\dots,X_n\), such that $\mathbb P(\Omega_n)\to1$, $\widetilde P_n=P_n$ on \(\Omega_n\), and the realized sequence \((\widetilde P_n)_{n\ge1}\) belongs to \(\mathcal C_S(P)\) almost surely. Then $\|J_{n,S}-J\|_\infty\to0,$ $\|J_{n,S}^\dagger -J\|_\infty \to0$ in probability, and hence $\|J_{n,S} -J_{n,S}^\dagger\|_\infty\to0$ in probability. In particular, the bootstrap \(p\)-value \(p_{n,S}\) is asymptotically valid: $\limsup_{n\to\infty}\mathbb P\bigl(p_{n,S}\le\alpha\bigr)\le\alpha.$ If, moreover, \(c_{1-\alpha}\) is the unique \((1-\alpha)\)-quantile of \(J\), then \(d_{n,S}^\dagger(1-\alpha)\xrightarrow{\mathbb P}c_{1-\alpha}\) and \(\mathbb P\bigl(T_{n,S}(h)>d_{n,S}^\dagger(1-\alpha)\bigr)\to\alpha\); in general, \(\limsup_{n}\mathbb P\bigl(T_{n,S}(h)>d_{n,S}^\dagger(1-\alpha)\bigr)\le\alpha\).
	\end{theorem}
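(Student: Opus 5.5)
The argument reduces to Theorem~\ref{teo:mainth1-TA} through the triangular-array device of \cite{romano1988}, Proposition~A.1. Write $J_n(\cdot;\nu)$ for the distribution function of $\sup_{\ga\in\mathcal F_h}|\mathbb T_{n,S}(\ga)|$ when the row is an i.i.d.\ sample of size $n$ from $\nu$. The key fact is that $J_n(\cdot;\nu_n)\to J$ uniformly for every deterministic sequence $(\nu_n)\in\mathcal C_S(P)$.

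The proof has three steps.

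\emph{Step 1: Pointwise convergence along $\mathcal C_S(P)$.} Take $(\nu_n)\in\mathcal C_S(P)$. Theorem~\ref{teo:mainth1-TA} gives $\sup_{\mathcal F_h}|\mathbb T_{n,S}|\xrightarrow{\mathcal L}\|\mathbb L^S_h\|_{\mathcal F_h}$. The limit law does not depend on the particular sequence; it depends only on $P$ and $V_S$.

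\emph{Step 2: Continuity of $J$.} I need $J$ to be continuous so that Pólya's theorem upgrades convergence in distribution to $\|J_n(\cdot;\nu_n)-J\|_\infty\to0$. Here $\|\mathbb L_h^S\|_{\mathcal F_h}$ is the supremum of the absolute value of a centered Gaussian process with bounded sample paths. By Tsirelson's theorem, or the Lifshits/Ylvisaker results on suprema of Gaussian processes, its distribution is continuous except possibly at its left endpoint $0$. I will rule out an atom at $0$ by checking that the variance \eqref{eq:cov-kernel} is positive at some $\ga$. Since the two split processes are independent, the variance is at least the term $6\,\mathrm{Var}(\mathbb B^{c}(\ga)+s(\ga)^\top Z)$, which is positive for a suitable level $b$ because $f_h$ is bounded and the projected law is non-degenerate, as $\lambda_d>0$. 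The fixed-law case $\nu_n\equiv P$ then gives $\|J_{n,S}-J\|_\infty\to0$.

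\emph{Step 3: Bootstrap consistency and $p$-values.} I will argue by contradiction. Suppose $\mathbb P(\|J_{n,S}^\dagger-J\|_\infty>\varepsilon)\not\to0$. Then along a subsequence this probability stays at least $\delta>0$. On $\Omega_n$ we have $J^\dagger_{n,S}=J_n(\cdot;\widetilde P_n)$. Almost surely the realized sequence $(\widetilde P_n)$ lies in $\mathcal C_S(P)$, so Step~1 gives $\|J_n(\cdot;\widetilde P_n)-J\|_\infty\to0$ almost surely. Combined with $\mathbb P(\Omega_n)\to1$, this contradicts the assumption.

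One point needs care: the bootstrap eigenvectors are sign-aligned with $\widehat u^3_{i,n}$ rather than with the eigenvectors of $\Sigma(\widetilde P_n)$. This is harmless, since reflections depend only on $u u^\top$. The triangle inequality then gives $\|J_{n,S}-J^\dagger_{n,S}\|_\infty\to0$ in probability.

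For the $p$-value, $p_{n,S}\le\alpha$ means $J^\dagger_{n,S}(T_{n,S}-)\ge1-\alpha$. Replacing $J^\dagger_{n,S}$ by the continuous $J$ costs $o_P(1)$. Because $J(T_{n,S})$ is asymptotically $U(0,1)$, this yields $\limsup\mathbb P(p_{n,S}\le\alpha)\le\alpha$. For the quantiles, when $c_{1-\alpha}$ is unique, uniform convergence of $J^\dagger_{n,S}$ to a continuous $J$ that is strictly increasing near $c_{1-\alpha}$ gives $d^\dagger_{n,S}(1-\alpha)\xrightarrow{\mathbb P}c_{1-\alpha}$. Slutsky's lemma then gives $\mathbb P(T_{n,S}>d^\dagger_{n,S})\to1-J(c_{1-\alpha})=\alpha$. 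Without uniqueness, I only have the bound $\liminf d^\dagger_{n,S}\ge c_{1-\alpha}$ in probability, which gives the limsup inequality.

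\emph{Main obstacle.} I expect Step~2, continuity of $J$ (in particular, ruling out an atom at $0$), to be the delicate point. The subsequence argument in Step~3, which needs the almost-sure membership in $\mathcal C_S(P)$, is the other spot that must be written carefully.
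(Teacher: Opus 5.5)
Your proposal is correct and follows essentially the same route as the paper. You apply Theorem~\ref{teo:mainth1-TA} along deterministic sequences in $\mathcal C_S(P)$ and get continuity of $J$ from a no-atoms result for Gaussian suprema together with a positive-variance check at a level $b$ with $0<F_h(b)<1$, as in Lemma~\ref{lem:continuity-J}. You then combine P\'olya's theorem with the $\Omega_n$ transfer, and your pointwise-in-$\omega$ argument is Proposition~A.1 of \cite{romano1988} written out. Your $p$-value and quantile steps match the paper's: left-limit uniform approximation, the probability integral transform, and $d^\dagger_{n,S}(1-\alpha)\ge c_{1-\alpha}-\varepsilon$ with probability tending to one, which is the paper's $\underline c$ device in another parametrization.
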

	
	\begin{corollary}\label{cor:global-pvalue}
		Assume that the global null hypothesis \(H_0\) is satisfied. Let \(S_0\in\mathcal S\) be such that \(H_{0,S_0}\) holds and the assumptions of Theorem~\ref{thm:bootstrap} are satisfied for that index. Then the global \(p\)-value \(p_n^{\mathrm{glob}}=\max_{S\in\mathcal S} p_{n,S}\) is asymptotically valid for testing \(H_0\), that is, \(\limsup_{n\to\infty}\mathbb P\bigl(p_n^{\mathrm{glob}}\le \alpha\bigr)\le \alpha.\)
	\end{corollary}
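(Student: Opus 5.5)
The plan is to reduce Corollary~\ref{cor:global-pvalue} to Theorem~\ref{thm:bootstrap} applied at the single index \(S_0\), combined with the monotonicity argument already sketched in Remark~\ref{rem:global-test}.

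First, I apply Theorem~\ref{thm:bootstrap} with \(S=S_0\). By hypothesis, \(P\) satisfies \(H_{0,S_0}\) and the fixed-law versions of \textup{(T1)}--\textup{(T4)} and \textup{(T2$'$)}. Moreover, the data-dependent bootstrap law \(P_{n,S_0}\) admits events \(\Omega_n\) and surrogate laws \(\widetilde P_n\) with realized sequences in \(\mathcal C_{S_0}(P)\). The theorem therefore gives \(\limsup_{n}\mathbb P(p_{n,S_0}\le\alpha)\le\alpha\).

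For completeness, this conclusion follows from the uniform convergences stated there. The limit \(J\) is continuous, because the law of \(\|\mathbb L_h^{S_0}\|_{\mathcal F_h}\) is continuous by the same argument as in Corollary~\ref{cor:finite-projections}. Hence
\[
p_{n,S_0}=1-J^\dagger_{n,S_0}(T_{n,S_0}(h)-)=1-J(T_{n,S_0}(h))+o_{\mathbb P}(1),
\]
and \(1-J(T_{n,S_0}(h))\) converges in law to a Uniform\((0,1)\) variable because \(\|J_{n,S_0}-J\|_\infty\to0\). The portmanteau theorem then yields the bound at \(\alpha\).

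Second, I use the deterministic inclusion \(\{\max_{S\in\mathcal S}p_{n,S}\le\alpha\}\subseteq\{p_{n,S_0}\le\alpha\}\), which holds because \(S_0\in\mathcal S\). This gives \(\mathbb P(p_n^{\mathrm{glob}}\le\alpha)\le\mathbb P(p_{n,S_0}\le\alpha)\) for every \(n\), and taking \(\limsup\) completes the argument.

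No assumption is needed on the other indices \(S\ne S_0\). Their \(p\)-values only need to be well-defined random variables in \([0,1]\), since they enter solely through the maximum. No condition is needed on \(|\mathcal S|\) either.

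The only genuinely delicate step is already contained in Theorem~\ref{thm:bootstrap}: passing from \(\|J_{n,S_0}-J^\dagger_{n,S_0}\|_\infty\to0\) to validity of the \(p\)-value, which requires continuity of \(J\) to handle the left limit \(J^\dagger(\cdot-)\). Since the theorem states this validity explicitly, I will simply invoke it.
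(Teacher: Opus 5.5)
Your proposal is correct and follows the paper's route. The corollary is the monotonicity argument of Remark~\ref{rem:global-test}, namely $\{\max_{S\in\mathcal S}p_{n,S}\le\alpha\}\subseteq\{p_{n,S_0}\le\alpha\}$, combined with the bound $\limsup_n\mathbb P(p_{n,S_0}\le\alpha)\le\alpha$ supplied by Theorem~\ref{thm:bootstrap} at $S_0$. Your optional recap of the $p$-value step is fine, but the continuity of $J$ comes from Lemma~\ref{lem:continuity-J}, which Corollary~\ref{cor:finite-projections} itself reuses, rather than from that corollary.
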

	
	We next explain why the symmetrized and smoothed bootstrap construction described above fits the abstract framework of Theorem~\ref{thm:bootstrap}.
	
	\begin{remark}\label{remsmoothPN}
		Let \(P_n=\widehat P_n^{\mathrm{sym}}*K_{b_n}\), with \(K\in C_c^\infty(\mathbb R^d)\) a centered, orthogonally invariant kernel normalized so that \(\cov(K)=I_d\) (a radial kernel is orthogonally invariant, and its covariance is automatically a multiple of \(I_d\)). Then \(P_n\) is invariant under the affine reflection \(A_{\widehat\mu_n,\widehat\QS_S^3}\), has a \(C^\infty\) density, and \(\cov(P_n)=\cov(\widehat P_n^{\mathrm{sym}})+b_n^2I_d\). Under \(H_{0,S}\), whenever \(b_n\to0\) and \(n^{1/4}b_n\to\infty\): (a) with probability tending to one, \(\widehat\QS_S^3\) coincides with the reflection \(\QS_{S,n}(P_n)\) determined by the ordered eigenvectors of \(\cov(P_n)\), which is condition (ii) of the class \(\mathcal C_S(P)\), by the spectral-identification lemma of the Supplementary Material; (b) on the high-probability events supplied by \textup{(SB-a)} (stated below), the uniform density bound in \textup{(T2)} is $O(1)$, while the derivative, directional-Lipschitz and Hessian constants in \textup{(T2)}, \textup{(T2$'$)} and \textup{(T3)} are of order \(O_{\mathbb P}(b_n^{-2})=o_{\mathbb P}(\sqrt n)\); and (c) \textup{(T4)} follows from \(\cov(P_n)\xrightarrow{\mathbb P}\Sigma\) and the continuity of eigenvalues and eigenvectors under simple spectrum. The detailed verification is given in the Supplementary Material; the genuinely delicate conditions are the convergence parts of \textup{(T2)}--\textup{(T3)}, collected below as \textup{(SB)}.
	\end{remark}
	
	\begin{proposition}\label{prop:dF-smoothed-bootstrap}
		Assume \(H_{0,S}\), i.e.\ the centered law \(P^c\) is invariant under \(\QS_S\), and let $P_n=\widehat P_n^{\mathrm{sym}}*K_{b_n},$ where \(\widehat P_n^{\mathrm{sym}}\) is the symmetrized empirical measure constructed from the reflected sample, as in the bootstrap scheme described above. Assume moreover that \(\sup_{a\in\mathbb S^{d-1}}\|f_a\|_\infty<\infty\), where \(f_a\) denotes the density of \(a^\top(X-\mu)\); \(\mathbb E\|X\|^2<\infty\); \(\widehat u_{i,n}^3\to u_i\) in probability for every \(i\in S\); \(b_n\to0\); and the one-dimensional marginal \(k\) of \(K\) satisfies \(\int_{\mathbb R}|t| k(t) dt<\infty\). Then $d_{\mathcal F}(P_n,P)\to0$ in probability.
	\end{proposition}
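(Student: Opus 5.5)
We bound $d_{\mathcal F}(P_n,P)$ by the triangle inequality through three intermediate measures and show that each of the three pieces is $o_{\mathbb P}(1)$:
\[
d_{\mathcal F}(P_n,P)\le d_{\mathcal F}\bigl(\widehat P_n^{\mathrm{sym}}*K_{b_n},\widehat P_n^{\mathrm{sym}}*K_{b_n}'\bigr)+d_{\mathcal F}\bigl(\widehat P_n^{\mathrm{sym}},\tfrac12 P+\tfrac12 (A_{\widehat\mu_n,\widehat\QS_S^3})_\#P\bigr)^{\ast}+\cdots,
\]
but it is cleaner to organise the argument as follows. Write $\widehat P_n^{I}$ for the empirical measure of $\{X_j:j\in I_n\}$ and $A_n:=A_{\widehat\mu_n,\widehat\QS_S^3}$. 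Then $\widehat P_n^{\mathrm{sym}}=\tfrac12\widehat P_n^I+\tfrac12(A_n)_\#\widehat P_n^I$. Let $A:=A_{\mu,\QS_S}$; under $H_{0,S}$ we have $A_\#P=P$.

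\emph{Step 1 (smoothing).} For any $Q$ and any $\ga_{a,b}\in\mathcal F$,
\[
(Q*K_{b_n})\ga_{a,b}=\int Q\ga_{a,b-b_nt}\,k(t)\,dt,
\]
because $K$ is orthogonally invariant, so $a^\top\xi$ has marginal $k$ for every $a$. Hence, with $Q=\widehat P_n^{\mathrm{sym}}$,
\[
|P_n\ga_{a,b}-P\ga_{a,b}|\le d_{\mathcal F}(\widehat P_n^{\mathrm{sym}},P)+\sup_{a,b}\Bigl|\int \bigl(P\ga_{a,b-b_nt}-P\ga_{a,b}\bigr)k(t)\,dt\Bigr|.
\]
The projected law of $a^\top X$ has density bounded by $C:=\sup_a\|f_a\|_\infty$, so the last term is at most $Cb_n\int|t|k(t)\,dt\to0$. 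This reduces the claim to $d_{\mathcal F}(\widehat P_n^{\mathrm{sym}},P)\to0$ in probability, and by convexity of $d_{\mathcal F}$ it suffices to show that $d_{\mathcal F}(\widehat P_n^I,P)\to0$ and $d_{\mathcal F}((A_n)_\#\widehat P_n^I,P)\to0$.

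\emph{Step 2 (empirical part).} $\mathcal F$ is the class of half-spaces, a VC class, so it is Glivenko--Cantelli. Since $|I_n|\to\infty$, this gives $d_{\mathcal F}(\widehat P_n^I,P)\to0$ almost surely. The subsample split is deterministic in size, and the Glivenko--Cantelli property holds uniformly over i.i.d. samples of that size.

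\emph{Step 3 (reflected part), the main obstacle.} Here the reflection is random. Note that $A_n^{-1}=A_n$ and that $(A_n)_\#Q\,\ga_{a,b}=Q\{x: a^\top(\widehat\mu_n+\widehat\QS_S^3(x-\widehat\mu_n))\le b\}$, which is again a half-space indicator, namely $\ga_{a_n',b_n'}$ with $a_n'=\widehat\QS_S^3a$ and $b_n'=b-a^\top\widehat\mu_n+a_n'^\top\widehat\mu_n$. Because $\mathcal F$ is closed under affine reflections, we get $d_{\mathcal F}((A_n)_\#\widehat P_n^I,(A_n)_\#P)\le d_{\mathcal F}(\widehat P_n^I,P)\to0$, with no independence needed. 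It remains to show $d_{\mathcal F}((A_n)_\#P,A_\#P)\to0$, where $A_\#P=P$. The two quantities to compare are $P\{a_n'^\top(X-\widehat\mu_n)\le b-a^\top\widehat\mu_n\}$ and $P\{a'^\top(X-\mu)\le b-a^\top\mu\}$, with $a'=\QS_Sa$.

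We have $\widehat u_{i,n}^3\to u_i$ in probability, so $\|\widehat\QS_S^3-\QS_S\|_{\mathrm{op}}\to0$. The law of large numbers with $\mathbb E\|X\|^2<\infty$ gives $\widehat\mu_n\to\mu$. Altogether the offsets differ by at most $\|\widehat\QS_S^3-\QS_S\|_{\mathrm{op}}\,\|X-\mu\|+2\|\widehat\mu_n-\mu\|=:\Delta_n\|X-\mu\|+\delta_n$, uniformly in $(a,b)$. Splitting on the event $\{\|X-\mu\|\le M\}$, the difference of probabilities is at most
\[
P(\|X-\mu\|>M)+2C(\Delta_nM+\delta_n),
\]
using the uniform projected-density bound: $P(|a'^\top(X-\mu)-c|\le\varepsilon)\le2C\varepsilon$. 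Letting $n\to\infty$ and then $M\to\infty$ gives convergence in probability. The same decomposition handles $\widehat\mu_n$ being computed from the same points $I_n$, because Step 3's reduction used no independence. Combining Steps 1 to 3 yields $d_{\mathcal F}(P_n,P)\to0$ in probability.
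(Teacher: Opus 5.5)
Your proof is correct and follows essentially the same route as the paper's. You reduce the smoothing error to $\sup_a\|f_a\|_\infty\, b_n\int|t|k(t)\,dt$ through the one-dimensional marginal of the orthogonally invariant kernel. You then handle the symmetrized empirical measure by Glivenko--Cantelli over half-spaces together with the closure of $\mathcal F$ under the affine reflection $A_n$. Finally, you bound $d_{\mathcal F}((A_n)_\#P,A_\#P)$ by a strip argument based on the uniform projected-density bound.

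The only real difference is in the tail control of that strip argument. You truncate at a fixed radius $M$, which gives no rate but needs only tightness of $\|X-\mu\|$. The paper instead uses Chebyshev with $\varepsilon=\delta_n^{2/3}$ to obtain an explicit $O_{\mathbb P}(\delta_n^{2/3})$ bound. The abandoned opening display, with its undefined $K_{b_n}'$, should simply be deleted.
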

	
	Combining Proposition~\ref{prop:dF-smoothed-bootstrap}, the spectral identification of the estimated mirror subspace, and the growth estimates of Remark~\ref{remsmoothPN}, we can state the validity of the smoothed bootstrap explicitly, as a self-contained consequence of Theorem~\ref{thm:bootstrap}. Some of the required conditions for \(P_n\) hold automatically for the kernel-smoothed symmetrized law. Two of them, uniform-in-direction consistency of the projected densities and weighted \(L_1\) consistency of the density gradients, are analytic statements belonging to kernel density-derivative estimation, which we collect as regularity conditions rather than re-derive here.
	
	\begin{named}{\textbf{(SB)}}\label{assum:SB}
		(Smoothed-bootstrap conditions.) The target law \(P\) has a \(C^1\) density \(f\) with \(\int_{\mathbb R^d}(1+\|x\|)\|\nabla f(x)\|\,dx<\infty\) and \(\sup_{a\in\mathbb S^{d-1}}\|f_a\|_\infty<\infty\); the kernel \(K\in C_c^\infty(\mathbb R^d)\) is a probability density (\(K\ge0\), \(\int K=1\)) that is centered and orthogonally invariant (\(\int xK(x)\,dx=0\) and \(K(Ox)=K(x)\) for all \(O\in\Od\)), normalized so that \(\int xx^\top K(x)\,dx=I_d\), with \(\int_{\mathbb R^d}(1+\|z\|^2)\|HK(z)\|_{\op}\,dz<\infty\); and the bandwidth satisfies \(b_n\to0\), \(n^{1/4}b_n\to\infty\), and \(nb_n^{d+2}\to\infty\). Moreover, for the kernel-smoothed symmetrized law \(P_n=\widehat P_n^{\mathrm{sym}}*K_{b_n}\),
		\begin{enumerate}[nosep]
			\item[(SB-a)] \(\displaystyle\sup_{a\in\mathbb S^{d-1}}\|f_{n,a}-f_a\|_\infty\to0\) in probability;
			\item[(SB-b)] \(\displaystyle\int_{\mathbb R^d}(1+\|x\|)\|\nabla f_n(x)-\nabla f(x)\|\,dx\to0\) in probability,
		\end{enumerate}
		where \(f_{n,a}\) and \(f_n\) are the projected and full densities of \(P_n\).
	\end{named}
	
	For a target law \(R\) with density \(f^R\) and projected densities \(f_a^R\), \textup{(SB)}\((R)\) denotes the same set of conditions with \(P\) replaced by \(R\) in (SB-a)--(SB-b); thus \textup{(SB)} abbreviates \textup{(SB)}\((P)\). Conditions (SB-a)--(SB-b) are the uniform-in-direction and weighted-\(L_1\) analogues of standard kernel density-derivative consistency, the non-standard feature being the uniformity over \(a\in\mathbb S^{d-1}\) combined with the data-dependent symmetrization through \(\widehat\QS_S^3\); sufficient conditions are expected to follow from uniform-in-direction kernel density-derivative results, but we take (SB-a)--(SB-b) as hypotheses on the pair \((P,b_n)\) (see the Supplementary Material for further discussion). The two families of assumptions in Proposition~\ref{prop:smoothed-bootstrap-valid} are not redundant: the fixed-law versions of \textup{(T1)}--\textup{(T4)},\,\textup{(T2$'$)} for \(P\) are used only to apply Theorem~\ref{teo:mainth1-TA} to the original statistic (in particular the fixed-law \textup{(T3)} requires \(C^2\) smoothness, stronger than the \(C^1\) smoothness in \textup{(SB)}), whereas \textup{(SB)} is used to verify that the bootstrap law \(P_n\) belongs to \(\mathcal C_S(P)\).
	
	\begin{proposition}\label{prop:smoothed-bootstrap-valid}
		Assume \(H_{0,S}\), Assumption~\ref{assum:H1}, the fixed-law versions of \textup{(T1)}--\textup{(T4)} and \textup{(T2$'$)} for \(P\) (needed to apply Theorem~\ref{teo:mainth1-TA} to the original statistic), and the smoothed-bootstrap conditions \textup{(SB)}. Then there exist events \(\Omega_n\in\sigma(X_1,\dots,X_n)\) with \(\mathbb P(\Omega_n)\to1\) and random probability measures \(\widetilde P_n\), measurable with respect to \(X_1,\dots,X_n\), such that \(\widetilde P_n=P_n\) on \(\Omega_n\) and the realized sequence \((\widetilde P_n)_{n\ge1}\) belongs to \(\mathcal C_S(P)\) almost surely, where \(P_n=\widehat P_n^{\mathrm{sym}}*K_{b_n}\). Consequently, by Theorem~\ref{thm:bootstrap}, the bootstrap \(p\)-value \(p_{n,S}\) is asymptotically valid: \(\limsup_{n\to\infty}\mathbb P(p_{n,S}\le\alpha)\le\alpha\).
	\end{proposition}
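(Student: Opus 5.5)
The plan is to construct the good events \(\Omega_n\) and the modified laws \(\widetilde P_n\) explicitly, verify conditions (i)--(iii) of \(\mathcal C_S(P)\) for \(\widetilde P_n\), and then invoke Theorem~\ref{thm:bootstrap}.

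\emph{Step 1: in-probability bounds.} I first collect the statements that hold with probability tending to one.
\begin{itemize}
\item By the weak law of large numbers and \(\mathbb E\|X\|^{4+\eta}<\infty\) (fixed-law (T1)), the sample mean, second and \((4+\eta)\)-th empirical moments on \(I_n\) and on \(\aleph_3\) converge in probability.
\item Hence \(\widehat\Sigma^3\to\Sigma\) in probability. By Assumption~\ref{assum:H1} and continuity of eigenvectors under a simple spectrum, \(\widehat u_{i,n}^3\to u_i\) in probability.
\item Proposition~\ref{prop:dF-smoothed-bootstrap} then gives \(d_{\mathcal F}(P_n,P)\to0\) in probability.
\item Remark~\ref{remsmoothPN} gives three further facts: (a) spectral identification \(\widehat\QS_S^3=\QS_{S,n}(P_n)\) with probability tending to one; (b) the bounds \(K_0=O(1)\) and \(K_{1,n},L_n,A_n=O_{\mathbb P}(b_n^{-2})=o_{\mathbb P}(\sqrt n)\), using \(n^{1/4}b_n\to\infty\); and (c) \(\cov(P_n)\to\Sigma\) in probability.
\item (SB-a) and (SB-b) give the convergence parts of (T2)--(T3).
\end{itemize}

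\emph{Step 2: from "in probability" to "every realization".} The key device is a subsequence argument, or equivalently deterministic rates. Convergence in probability means there exist deterministic sequences \(\varepsilon_n\downarrow0\) with \(\mathbb P(d_{\mathcal F}(P_n,P)>\varepsilon_n)\to0\), and similarly for \(\|\cov(P_n)-\Sigma\|_{\op}\), for \(\sup_a\|f_{n,a}-f_a\|_\infty\), and for the weighted \(L_1\) gradient error. For the \(O_{\mathbb P}(b_n^{-2})\) quantities I choose deterministic \(M_n\to\infty\) slowly enough that \(M_nb_n^{-2}=o(\sqrt n)\) while the bounds still hold with probability tending to one. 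I bound the moment \(\int\|x\|^{4+\eta}dP_n\) by a fixed constant \(C\) on a high-probability event, which is possible because it converges in probability to \(\mathbb E\|X\|^{4+\eta}+\)small. Define \(\Omega_n\) as the intersection of all these events together with the spectral-identification event. Since finitely many events each have probability tending to one, \(\mathbb P(\Omega_n)\to1\), and \(\Omega_n\in\sigma(X_1,\dots,X_n)\).

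\emph{Step 3: the modified laws.} Set \(\widetilde P_n=P_n\) on \(\Omega_n\) and \(\widetilde P_n=P^\ast_n\) off \(\Omega_n\). Here \(P^\ast_n\) is a fixed deterministic law in \(\mathcal C_S(P)\); the natural choice is \(P\) itself smoothed, \(P*K_{b_n}\), which is \(\QS_S\)-invariant about \(\mu\) by orthogonal invariance of \(K\). One can check that this sequence satisfies (T1)--(T4) and (T2\('\)) with the same rates, using (SB) for \(P\) and smoothness of \(K\). With this choice, every realized sequence \((\widetilde P_n)\) satisfies the deterministic bounds of Step 2 for all \(n\). Therefore (i), (ii) and (iii) of \(\mathcal C_S(P)\) hold surely, not merely almost surely. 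Measurability of \(\widetilde P_n\) follows since \(P_n\) is a measurable function of the data.

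\emph{Step 4: conclusion.} Theorem~\ref{thm:bootstrap} then yields \(\|J^\dagger_{n,S}-J\|_\infty\to0\) in probability and \(\limsup_{n\to\infty}\mathbb P(p_{n,S}\le\alpha)\le\alpha\).

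\emph{Main obstacle.} The delicate point is (a): showing that the reflection determined by the ordered eigenvectors of \(\cov(P_n)\) coincides exactly with \(\widehat\QS_S^3\), so that \(P_n\) is invariant under \(\QS_{S,n}(P_n)\) rather than only approximately. I would argue this as follows.
\begin{itemize}
\item \(\cov(P_n)=\cov(\widehat P_n^{\mathrm{sym}})+b_n^2I_d\), and \(\cov(\widehat P_n^{\mathrm{sym}})\) commutes with \(\widehat\QS_S^3\) because \(\widehat P_n^{\mathrm{sym}}\) is invariant under it (Proposition~\ref{prop:commute} applied to \(\widehat P_n^{\mathrm{sym}}\)). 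Hence \(\cov(P_n)\) preserves \(\widehat V_S\) and \(\widehat V_S^\perp\).
\item By \(\cov(P_n)\to\Sigma\) in probability and the simple spectrum, for large \(n\) each eigenvector of \(\cov(P_n)\) is close to the corresponding \(u_i\), and its eigenvalue is separated from the others by half the minimal gap of \(\Sigma\).
\item Because \(\cov(P_n)\) preserves the decomposition, each eigenvector lies in \(\widehat V_S\) or in \(\widehat V_S^\perp\). Since \(\widehat V_S\to V_S\), closeness to \(u_i\) forces it into \(\widehat V_S\) exactly when \(i\in S\).
\item Consequently \(\sum_{i\in S}u_{i,n}u_{i,n}^\top=\widehat\PS_S^3\), the projection onto \(\widehat V_S\).
\end{itemize}
This reasoning is what the cited spectral-identification lemma would encode. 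I would invoke that lemma, checking only that its hypotheses are met on \(\Omega_n\).
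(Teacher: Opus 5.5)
Your proposal is correct and follows essentially the same route as the paper. Both arguments choose deterministic slack sequences and define high-probability events $\Omega_n$ from the $d_{\mathcal F}$, (SB-a)--(SB-b), $(4+\eta)$-moment, covariance and eigenvector bounds; on $\Omega_n$, $P_n$ satisfies (i)--(iii), with the exact invariance obtained from Lemma~\ref{lem:spectral-id}; off $\Omega_n$, $P_n$ is replaced by a smoothed copy of $P$ lying in $\mathcal C_S(P)$.

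The only cosmetic difference is that the paper smooths $P$ with an auxiliary bandwidth $\tau_n=n^{-\beta}$ rather than $b_n$, which changes nothing since $b_n$ meets the same rate conditions. Note also that the properties of this off-event law come from the fixed-law assumptions on $P$ via an approximate-identity argument, not from (SB-a)--(SB-b), which concern only the data-driven $P_n$.
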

	
	The next result shows that, under fixed alternatives, the same bootstrap construction yields a consistent test, provided the data-dependent resampling law admits a modification belonging to the appropriate bootstrap class and converges to the corresponding population symmetrization.
	
	For \(S\in\mathcal S\), let \(A_S:=A_{\mu,\QS_S}\) denote the affine reflection with mirror \(V_S\) through \(\mu\), and define the symmetrized population law $P^{\mathrm{sym}}_S:=\frac12 P+\frac12\, P\circ A_S^{-1}.$ Let \(\mathcal C_S(P^{\mathrm{sym}}_S)\) denote the class obtained from \(\mathcal C_S(P)\) by replacing \(P\) with \(P^{\mathrm{sym}}_S\). (We write \(P^{\mathrm{sym}}_S\) for this probability measure to distinguish it from the reflection matrix \(\QS_S\).)
	
	\begin{theorem}\label{thm:power}
		Assume \ref{assum:H1}, Carleman's condition \eqref{carleman}, and the fixed-law analogues of \textup{(T1)}--\textup{(T2)}. Suppose that the global null hypothesis in \eqref{hs} is false. Assume moreover that, for each \(S\in\mathcal S\), the bootstrap law \(P_{n,S}\) satisfies the approximation condition stated above with limiting law \(P^{\mathrm{sym}}_S\), that is, there exist \(\Omega_{n,S}\in\sigma(X_1,\dots,X_n)\) and random probability measures \(\widetilde P_{n,S}\), measurable with respect to \(X_1,\dots,X_n\), such that \(\mathbb P(\Omega_{n,S})\to1\), \(\widetilde P_{n,S}=P_{n,S}\) on \(\Omega_{n,S}\), and \((\widetilde P_{n,S})_{n\ge1}\in\mathcal C_S(P^{\mathrm{sym}}_S)\) almost surely.
		
		Then, for surface-a.e.\ \(h\in\mathbb S^{d-1}\), for every \(S\in\mathcal S\), \(g_h(\QS_S)>0\), \(\widehat g_{n,h}(\widehat\QS_S^3)\xrightarrow{\mathbb P} g_h(\QS_S)\), \(T_{n,S}(h)\xrightarrow{\mathbb P}\infty\), the bootstrap critical value is bounded in probability, \(d_{n,S}^\dagger(1-\alpha)=O_{\mathbb P}(1)\), and \(p_{n,S}\xrightarrow{\mathbb P}0\). Consequently, \(\mathbb P(p_n^{\mathrm{glob}}\le\alpha)\to1\). Thus the global bootstrap test is consistent against fixed alternatives.
	\end{theorem}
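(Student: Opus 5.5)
The plan is to prove the theorem one candidate $S\in\mathcal S$ at a time and then combine. The global conclusion needs no multiplicity argument: $p_n^{\mathrm{glob}}=\max_S p_{n,S}$ and $\mathcal S$ is finite, so $p_{n,S}\xrightarrow{\mathbb P}0$ for every $S$ gives $\max_S p_{n,S}\xrightarrow{\mathbb P}0$, and hence $\mathbb P(p_n^{\mathrm{glob}}\le\alpha)\to1$.

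\emph{Step 1: positivity of $g_h(\QS_S)$.} Since the global null is false and, under \ref{assum:H1}, the reduced and intrinsic hypotheses are equivalent, for every $S\in\mathcal S$ we have $\mathcal L(X-\mu)\ne\mathcal L(\QS_S(X-\mu))$. Carleman's condition \eqref{carleman} is preserved by the orthogonal map $\QS_S$. The sharp Cram\'er--Wold theorem of \cite{cuesta2007sharp} then gives, for each $S$, a surface-null exceptional set $N_S\subset\mathbb S^{d-1}$ off which the projected laws differ, i.e.\ $g_h(\QS_S)>0$. We take $h\notin\bigcup_{S}N_S$, which is still a null set because $\mathcal S$ is finite.

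\emph{Step 2: consistency of $\widehat g_{n,h}(\widehat\QS_S^3)$.} By the fixed-law (T1) and \ref{assum:H1}, the sample covariance of $\aleph_3$ converges to $\Sigma$. Simple spectrum gives continuity of the eigenprojections, so $\widehat\QS_S^3\to\QS_S$ in probability. The class $\mathcal F$ of half-spaces is VC, hence Glivenko--Cantelli, uniformly over translations and rotations. Together with $\bar X_n^r\to\mu$, this yields
\[
\sup_t\Big|\widehat F^1_{n,h}(t)-F_{\langle X-\mu,h\rangle}(t)\Big|\to0,\qquad
\sup_t\Big|\widehat F^2_{n,h,\widehat\QS_S^3}(t)-F_{\langle X-\mu,\widehat\QS_S^3h\rangle}(t)\Big|\to0
\]
in probability, where centering at the sample mean is absorbed by uniform continuity of the projected distribution functions; this is the bounded-density part of (T2). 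The uniform density bound in (T2) also gives $\sup_t|F_{\langle X-\mu,a\rangle}(t)-F_{\langle X-\mu,a'\rangle}(t)|\to0$ as $a'\to a$, via a standard bound of order $\|a-a'\|\,\mathbb E\|X-\mu\|$ plus a small-probability tail term. Applying this with $a'=\widehat\QS_S^3h\to\QS_Sh$ gives $\widehat g_{n,h}(\widehat\QS_S^3)\xrightarrow{\mathbb P}g_h(\QS_S)>0$. Consequently $T_{n,S}(h)=\sqrt n\,\widehat g_{n,h}(\widehat\QS_S^3)\xrightarrow{\mathbb P}\infty$.

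\emph{Step 3: boundedness of the bootstrap critical value.} This is the step I expect to be the main obstacle. The law $P^{\mathrm{sym}}_S$ satisfies $H_{0,S}$ by construction: $A_S$ is an involution fixing $\mu$, and $\mu$ is also the mean of $P^{\mathrm{sym}}_S$. The hypothesis $(\widetilde P_{n,S})\in\mathcal C_S(P^{\mathrm{sym}}_S)$ a.s.\ is exactly what Theorem~\ref{teo:mainth1-TA} needs. Conditionally on the data, along each realized sequence, the bootstrap process $\mathbb T^\dagger_{n,S}$ converges to the Gaussian limit $\mathbb L^S$ built from $P^{\mathrm{sym}}_S$. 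I would reuse the argument behind Theorem~\ref{thm:bootstrap}, i.e.\ Proposition~A.1 of \cite{romano1988}, with $P$ replaced by $P^{\mathrm{sym}}_S$, noting that it only uses the bootstrap side. This gives $\|J^\dagger_{n,S}-J^{\mathrm{sym}}\|_\infty\to0$ in probability, where $J^{\mathrm{sym}}$ is the law of $\|\mathbb L^S_h\|_{\mathcal F_h}$ under $P^{\mathrm{sym}}_S$. Discrepancies on $\Omega_{n,S}^c$ have vanishing probability. One point must be checked: the sign convention for $\widehat u^{3,\dagger}_{i,n}$ and the eigenvector identification in $\mathcal C_S$ must refer to $\Sigma(P^{\mathrm{sym}}_S)$. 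Its eigenvectors are still the $u_i$, because $\QS_S$ commutes with $\Sigma$, so the symmetrized covariance is $\tfrac12(\Sigma+\QS_S\Sigma\QS_S)$, which has the same eigenvectors and eigenvalues. Hence simple spectrum is inherited.

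Since $\|\mathbb L^S_h\|_{\mathcal F_h}$ is a.s.\ finite (a Gaussian sup over a Donsker class), $J^{\mathrm{sym}}$ is a genuine distribution function. Fix $\varepsilon>0$ and choose $M$ with $J^{\mathrm{sym}}(M)>1-\alpha/2$; this is where $\alpha<1$ gives the needed room. With probability tending to one, $J^\dagger_{n,S}(M)\ge1-\alpha$, so $d^\dagger_{n,S}(1-\alpha)\le M$. Thus $d^\dagger_{n,S}(1-\alpha)=O_{\mathbb P}(1)$. The same uniform closeness gives $p_{n,S}=1-J^\dagger_{n,S}(T_{n,S}(h)-)\le 1-J^{\mathrm{sym}}(T_{n,S}(h)-\,)+o_{\mathbb P}(1)$, and since $T_{n,S}(h)\to\infty$ in probability, $p_{n,S}\xrightarrow{\mathbb P}0$. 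The global conclusion follows as described at the start.
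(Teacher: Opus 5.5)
Your proposal is correct and follows essentially the paper's argument. The steps match: positivity of $g_h(\QS_S)$ off a finite union of null sets via sharp Cram\'er--Wold, then uniform Glivenko--Cantelli over half-spaces plus a strip bound for continuity in the direction to get $\widehat g_{n,h}(\widehat\QS_S^3)\to g_h(\QS_S)>0$, then the bootstrap argument of Theorem~\ref{thm:bootstrap} rerun with limit law $P^{\mathrm{sym}}_S$ to bound $d_{n,S}^\dagger(1-\alpha)$ and drive $p_{n,S}\to0$. One small slip: the existence of $M$ with $J^{\mathrm{sym}}(M)>1-\alpha/2$ uses $\alpha>0$, not $\alpha<1$.
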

	
	\begin{remark}\label{rem:power-hypotheses}
		For the consistency of the statistic \(T_{n,S}(h)\) itself, only \(\mathbb E\|X\|^2<\infty\), \(\sup_{a}\|f_a\|_\infty<\infty\), Carleman's condition and the simple spectrum are used. The stronger fixed-law \textup{(T1)}--\textup{(T2)} in the statement are retained only for compatibility with the regularity framework of the bootstrap (Theorem~\ref{thm:bootstrap} and Proposition~\ref{prop:smoothed-bootstrap-alt}).
	\end{remark}
	
	The approximation hypothesis of Theorem~\ref{thm:power} is discharged, for the concrete smoothed-bootstrap construction, by the following analogue of Proposition~\ref{prop:smoothed-bootstrap-valid} with target law \(P^{\mathrm{sym}}_S\). We state it for a general \(P\), since the construction and its limit \(P^{\mathrm{sym}}_S\) make sense whether or not \(H_{0,S}\) holds (if \(H_{0,S}\) holds then \(P^{\mathrm{sym}}_S=P\) and it reduces to Proposition~\ref{prop:smoothed-bootstrap-valid}).
	
	\begin{proposition}\label{prop:smoothed-bootstrap-alt}
		Fix \(S\in\mathcal S\). Assume Assumption~\ref{assum:H1}, the fixed-law versions of \textup{(T1)}--\textup{(T4)} and \textup{(T2$'$)} for the symmetrized law \(P^{\mathrm{sym}}_S\), the smoothed-bootstrap conditions \textup{(SB)}\((P^{\mathrm{sym}}_S)\), and moreover \(\sup_{a\in\mathbb S^{d-1}}\|f_a^{P}\|_\infty<\infty\) and \(\mathbb E_P\|X\|^2<\infty\) for the original law \(P\) (used to control the strip term below under \(P\); these are consequences of, but strictly weaker than, the fixed-law \textup{(T1)}--\textup{(T2)} for \(P\)). Let \(P_{n,S}=\widehat P_{n}^{\mathrm{sym}}*K_{b_n}\) be the symmetrized and smoothed bootstrap law built from the original sample with \(\widehat\QS_S^3\) and \(\widehat\mu_n\). Then there exist events \(\Omega_{n,S}\in\sigma(X_1,\dots,X_n)\) with \(\mathbb P(\Omega_{n,S})\to1\) and random probability measures \(\widetilde P_{n,S}\) with \(\widetilde P_{n,S}=P_{n,S}\) on \(\Omega_{n,S}\) and \((\widetilde P_{n,S})_{n\ge1}\in\mathcal C_S(P^{\mathrm{sym}}_S)\) almost surely.
	\end{proposition}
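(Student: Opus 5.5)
We mirror the proof of Proposition~\ref{prop:smoothed-bootstrap-valid}, with the target law \(P\) replaced by \(P^{\mathrm{sym}}_S\). The only new ingredient is that the data are drawn from \(P\), not from the target. The first step identifies the limit. Since \(\widehat\mu_n\to\mu\) and \(\widehat u_{i,n}^3\to u_i^{\mathrm{sym}}\) in probability, the empirical symmetrization converges to \(P^{\mathrm{sym}}_S\). Here \(u_i^{\mathrm{sym}}\) are the eigenvectors of \(\cov(P^{\mathrm{sym}}_S)\). We must check that these are consistently estimated by the eigenvectors of \(\widehat\Sigma_n^3\), whose limit is \(\cov(P)\), not \(\cov(P^{\mathrm{sym}}_S)\). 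This holds because \(\cov(P^{\mathrm{sym}}_S)=\tfrac12(\Sigma+\QS_S\Sigma\QS_S)\), which commutes with \(\QS_S\). Moreover, under \ref{assum:H1} for \(P\) the \(u_i\) diagonalize both matrices: \(\QS_S u_i=\pm u_i\), so \(\QS_S\Sigma\QS_S u_i=\lambda_i u_i\). Hence \(\cov(P^{\mathrm{sym}}_S)=\Sigma\), and \(\QS_S\) is the same reflection for both laws. The simple spectrum assumed in (T4) for \(P^{\mathrm{sym}}_S\) then transfers directly.

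The second step proves \(d_{\mathcal F}(P_{n,S},P^{\mathrm{sym}}_S)\to0\) in probability, following Proposition~\ref{prop:dF-smoothed-bootstrap}. We split the distance into three terms.
\begin{itemize}
\item The smoothing error \(d_{\mathcal F}(\widehat P_n^{\mathrm{sym}}*K_{b_n},\widehat P_n^{\mathrm{sym}})\) is bounded by a strip term. We control this strip under \(P\) using \(\sup_a\|f^P_a\|_\infty<\infty\) and \(\int|t|k<\infty\); this is where the extra hypotheses on \(P\) enter.
\item The unreflected half \(\tfrac12(\widehat P_n-P)\) is controlled by the Glivenko--Cantelli property of half-spaces.
\item For the reflected half, \(\ga(A_{\widehat\mu_n,\widehat\QS_S^3}x)\) is again a half-space indicator, with direction \(\widehat\QS_S^3a\) and offset shifted by an \(O_{\mathbb P}(1)\cdot\|\widehat\mu_n-\mu\|\)-type perturbation. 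Its distance to the population term is bounded by a Glivenko--Cantelli term plus \(\sup_a\|f_a^P\|_\infty\) times the shift. The shift involves \(\|X\|\,\|\widehat\QS_S^3-\QS_S\|_{\op}\), which we handle by truncating on \(\|X\|\le M\) and using \(\mathbb E_P\|X\|^2<\infty\).
\end{itemize}

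The third step verifies (ii)--(iii) of \(\mathcal C_S(P^{\mathrm{sym}}_S)\) exactly as in Remark~\ref{remsmoothPN}. Invariance of \(P_{n,S}\) under \(A_{\widehat\mu_n,\widehat\QS_S^3}\) is exact by the radial-kernel argument. We have \(\cov(P_{n,S})=\tfrac12(\widehat\Sigma+\widehat\QS_S^3\widehat\Sigma\widehat\QS_S^3)+b_n^2I_d\to\Sigma\), where \(\widehat\Sigma\) is the covariance of the two-thirds sample; this gives (T4). The spectral-identification lemma then shows that \(\widehat\QS_S^3\) equals \(\QS_{S,n}(P_{n,S})\) with probability tending to one. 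This uses \(\widehat\QS_S^3\) commuting with the covariance, whose eigenvectors then split along \(V_{\widehat S}\) and its complement, with gaps matching in order. The moment bound (T1) follows from \(\mathbb E_P\|X\|^{4+\eta}<\infty\), which the fixed-law (T1) for \(P^{\mathrm{sym}}_S\) provides, since the reflection is an isometry about \(\mu\). The growth rates \(O_{\mathbb P}(b_n^{-2})=o_{\mathbb P}(\sqrt n)\) in (T2), (T2\('\)) and (T3) come from derivative bounds on \(K_{b_n}\). The convergence parts are exactly (SB)\((P^{\mathrm{sym}}_S)\).

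Finally, we set \(\Omega_{n,S}\) to be the intersection of the high-probability events on which the spectral identification holds and the bounds hold with fixed constants. On the complement we let \(\widetilde P_{n,S}\) be a fixed deterministic sequence in \(\mathcal C_S(P^{\mathrm{sym}}_S)\), for instance \(P^{\mathrm{sym}}_S*K_{b_n}\). The main obstacle is turning the convergences in probability into almost-sure membership of the realized sequence. Here the \(\Omega_n\) device, together with a subsequence argument in the spirit of Romano's Proposition~A.1, is needed: we would use convergence in probability along subsequences with a.s.\ sub-subsequences. The delicate analytic step is the reflected-half bound, where the data-dependent reflection must be controlled under the nonsymmetric law \(P\).
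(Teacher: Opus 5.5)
Your Steps 1--3 follow the paper's proof: $\cov(P^{\mathrm{sym}}_S)=\Sigma$; $\widehat\QS_S^3$ and $\widehat\mu_n$ are consistent without $H_{0,S}$; the strip bound under $P$ handles the reflected half; Lemma~\ref{lem:spectral-id} gives spectral identification; and (T1) follows from the isometry of $A_S$.

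\textbf{The gap is in the final step, which is the actual content of the statement.} Requiring only that the bounds on $\Omega_{n,S}$ hold ``with fixed constants'' does not put the realized sequence in $\mathcal C_S(P^{\mathrm{sym}}_S)$. Condition (i), the convergence parts of (T2)--(T3), and (T4) require the following along the realized sequence: $d_{\mathcal F}(\widetilde P_{n,S},P^{\mathrm{sym}}_S)\to0$, $\sup_a\|f_{n,a}-f^{\mathrm{sym}}_{S,a}\|_\infty\to0$, $\int(1+\|x\|)\|\nabla f_n-\nabla f^{\mathrm{sym}}_S\|\,dx\to0$, and $\cov(\widetilde P_{n,S})\to\Sigma$. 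Events of probability tending to one that only impose constant bounds give a path no such convergence.

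The sub-subsequence argument you propose does not repair this. It yields almost-sure convergence only along a further subsequence. It does not give membership of the whole sequence $(\widetilde P_{n,S})_{n\ge1}$, which is what the statement claims and what Theorem~\ref{thm:bootstrap} consumes.

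The missing idea is elementary. For finitely many $o_{\mathbb P}(1)$ quantities, you can choose a \emph{deterministic} $\varepsilon_n\downarrow0$ decaying slowly enough that each quantity is $\le\varepsilon_n$ with probability tending to one. Build $\Omega_{n,S}$ as the intersection of:
\begin{itemize}
\item $\{d_{\mathcal F}(P_{n,S},P^{\mathrm{sym}}_S)\le\varepsilon_n\}$;
\item $\{\sup_a\|f_{n,a}-f^{\mathrm{sym}}_{S,a}\|_\infty\le\varepsilon_n\}$;
\item $\{\int(1+\|x\|)\|\nabla f_n-\nabla f^{\mathrm{sym}}_S\|\,dx\le\varepsilon_n\}$;
\item $\{\|\cov(P_{n,S})-\Sigma\|_{\op}\le\varepsilon_n\}$;
\item $\{\max_{i\in S}\|\widehat u^3_{i,n}-u_i\|\le\varepsilon_n\}$;
\item the moment event, and deterministic $o(\sqrt n)$ envelopes for the growth constants.
\end{itemize}
Off $\Omega_{n,S}$, replace $P_{n,S}$ by a fixed member $\nu_n^\ast$ of $\mathcal C_S(P^{\mathrm{sym}}_S)$; your $P^{\mathrm{sym}}_S*K_{b_n}$ works as well as the paper's $P^{\mathrm{sym}}_S*K_{\tau_n}$. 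Then every realized sequence satisfies (i)--(iii) with the deterministic rates $\max(\varepsilon_n,\varepsilon_n^\ast)$. For (ii), the quantitative form of Lemma~\ref{lem:spectral-id} applies for $n\ge n_0$, with $n_0$ deterministic. Membership therefore holds for every $\omega$, with no subsequence argument. This is exactly how the paper proceeds.

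\textbf{A smaller point concerns your first bullet.} $d_{\mathcal F}(\widehat P_n^{\mathrm{sym}}*K_{b_n},\widehat P_n^{\mathrm{sym}})$ is governed by strip masses under the \emph{atomic} measure $\widehat P_n^{\mathrm{sym}}$, not under $P$, so it need not be $O(b_n)$. Before you can use $\sup_a\|f^P_a\|_\infty$, you must pass to the population, for instance via
$|\widehat P_n^{\mathrm{sym}}(\ga_{a,b'})-\widehat P_n^{\mathrm{sym}}(\ga_{a,b})|\le|P^{\mathrm{sym}}_S(\ga_{a,b'})-P^{\mathrm{sym}}_S(\ga_{a,b})|+2\,d_{\mathcal F}(\widehat P_n^{\mathrm{sym}},P^{\mathrm{sym}}_S)$.
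This makes the smoothing term $O(b_n)+2\,d_{\mathcal F}(\widehat P_n^{\mathrm{sym}},P^{\mathrm{sym}}_S)=o_{\mathbb P}(1)$ by your other two bullets.

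The paper avoids this detour by reversing the order. It writes $P_{n,S}\ga_{a,b}=\int\widehat P_n^{\mathrm{sym}}(\ga_{a,b-b_na^\top z})K(z)\,dz$ and compares with $P^{\mathrm{sym}}_S$ at the shifted levels. Only then does it smooth the target, using $\sup_a\|f^{\mathrm{sym}}_{S,a}\|_\infty\le\sup_a\|f^P_a\|_\infty$.
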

	
	Proposition~\ref{prop:smoothed-bootstrap-alt} concerns one fixed candidate \(S\); combining it over the finite family \(\mathcal S\) yields the global consistency statement.
	
	\begin{corollary}\label{cor:global-consistency}
		Assume that the hypotheses of Proposition~\ref{prop:smoothed-bootstrap-alt} hold for every \(S\in\mathcal S\) and, in addition, that Carleman's condition \eqref{carleman} and the fixed-law \textup{(T1)}--\textup{(T2)} for \(P\) hold. Then the approximation hypothesis of Theorem~\ref{thm:power} is satisfied for every \(S\in\mathcal S\); consequently, if the global null hypothesis \eqref{hs} is false, the global smoothed-bootstrap test is consistent against fixed alternatives.
	\end{corollary}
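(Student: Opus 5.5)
The plan is to obtain Corollary~\ref{cor:global-consistency} by invoking Proposition~\ref{prop:smoothed-bootstrap-alt} once for each candidate and then passing the resulting finite family of approximation statements to Theorem~\ref{thm:power}.

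\textbf{Step 1 (per-candidate approximation).} Fix \(S\in\mathcal S\). The hypotheses of Proposition~\ref{prop:smoothed-bootstrap-alt} hold for this \(S\) by assumption. The proposition therefore supplies events \(\Omega_{n,S}\in\sigma(X_1,\dots,X_n)\) with \(\mathbb P(\Omega_{n,S})\to1\), together with random measures \(\widetilde P_{n,S}\) that coincide with \(P_{n,S}=\widehat P_n^{\mathrm{sym}}*K_{b_n}\) on \(\Omega_{n,S}\) and satisfy \((\widetilde P_{n,S})_{n\ge1}\in\mathcal C_S(P^{\mathrm{sym}}_S)\) almost surely. This is verbatim the approximation hypothesis of Theorem~\ref{thm:power} with limiting law \(P^{\mathrm{sym}}_S\). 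Since \(\mathcal S\) is finite (\(|\mathcal S|\le 2^d\)), the same holds simultaneously for all \(S\in\mathcal S\). If a common event is needed, \(\Omega_n:=\bigcap_{S}\Omega_{n,S}\) still has probability tending to one by a finite union bound.

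\textbf{Step 2 (remaining hypotheses of Theorem~\ref{thm:power}).} Assumption~\ref{assum:H1} is part of the hypotheses of Proposition~\ref{prop:smoothed-bootstrap-alt}. Carleman's condition \eqref{carleman} and the fixed-law \textup{(T1)}--\textup{(T2)} for \(P\) are assumed explicitly in the corollary. Hence every hypothesis of Theorem~\ref{thm:power} is in force.

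\textbf{Step 3 (conclusion).} When the global null \eqref{hs} is false, Theorem~\ref{thm:power} gives, for surface-a.e.\ \(h\), that \(p_{n,S}\xrightarrow{\mathbb P}0\) for each \(S\), and hence \(\mathbb P(p_n^{\mathrm{glob}}\le\alpha)\to1\). This is the claimed consistency.

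The only point requiring care is bookkeeping, because the bootstrap law depends on \(S\). I will make sure Proposition~\ref{prop:smoothed-bootstrap-alt} is applied with the target \(P^{\mathrm{sym}}_S\), not \(P\). This matters because \(P\) itself need not satisfy \((\mathrm{T})\) invariance under any \(\QS_S\), whereas \(P^{\mathrm{sym}}_S\) does by construction. The step I expect to be hardest, though it is already delegated to the proposition, is the consistency with \(P^{\mathrm{sym}}_S\) of the eigenstructure used in \(\mathcal C_S\). Under \ref{assum:H1}, symmetrizing by an orthogonal reflection commuting with \(\Sigma\) preserves \(\Sigma\), so \(\Sigma(P^{\mathrm{sym}}_S)=\Sigma\) and \((\mathrm T4)\) refers to the same simple spectrum and eigenvectors. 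I will note this identity explicitly so that the limiting reflection in \(\mathcal C_S(P^{\mathrm{sym}}_S)\) is exactly \(\QS_S\).
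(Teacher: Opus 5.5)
Your proposal is correct and matches the paper's own proof: apply Proposition~\ref{prop:smoothed-bootstrap-alt} to each of the finitely many \(S\in\mathcal S\) to get the approximation hypothesis, note that Assumption~\ref{assum:H1}, Carleman's condition and the fixed-law \textup{(T1)}--\textup{(T2)} complete the hypotheses of Theorem~\ref{thm:power}, and conclude. The common event \(\bigcap_S\Omega_{n,S}\) and the remark that \(\cov(P^{\mathrm{sym}}_S)=\Sigma\) are harmless extras; the latter is already handled inside the proposition and is not needed at this level.
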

	
	The assumptions on \(P^{\mathrm{sym}}_S\) hold whenever \(P\) satisfies the corresponding fixed-law assumptions. Since \(A_S\) is affine with orthogonal linear part, symmetrization preserves the weighted \(C^2\) regularity of \textup{(T3)} and the projected-density bounds of \textup{(T2)}--\textup{(T2$'$)}. Because \(\QS_S\) commutes with \(\Sigma\), it also preserves the covariance eigenstructure, with \(\cov(P^{\mathrm{sym}}_S)=\Sigma\); the explicit verification is given in the Supplementary Material. Moreover, \(\widehat\QS_S^3\xrightarrow{\mathbb P}\QS_S\) and \(\widehat\mu_n\xrightarrow{\mathbb P}\mu\) hold whether or not \(H_{0,S}\) holds, using only \(\mathbb E\|X\|^2<\infty\) and the simple spectrum, because no symmetry of \(P\) is used to estimate the principal components.
	
	\begin{remark}\label{rem:random-h}
		Theorem~\ref{thm:bootstrap} and Corollary~\ref{cor:global-pvalue} hold for every fixed \(h\in\mathbb S^{d-1}\); hence the level guarantee is valid conditionally on the realized \(h\), and therefore also unconditionally when \(h\) is drawn independently of the data (integrate over the law of \(h\)). Theorem~\ref{thm:power}, by contrast, holds for surface-a.e.\ \(h\), and therefore with probability one over the independent randomization of \(h\). Thus randomizing \(h\) preserves the asymptotic level, and consistency holds almost surely over the choice of \(h\).
	\end{remark}
	
	\begin{remark}\label{rem:projections}
		The testing procedure developed in this paper is based on one-dimensional
		projections. This choice is supported by the random-projection principle:
		whenever $H_{0,S}$ holds, the projected distributions of $P$ and of its
		symmetrization about $Q_S$ coincide for every direction $h\in\mathbb{S}^{d-1}$,
		whereas under the alternative a randomly chosen direction detects the
		discrepancy with probability one under suitable identifiability conditions.
		
		Theorem~\ref{teo:mainth1-TA} is stated for a single direction, but the theory is not
		restricted to that case. For a fixed number $k$ of directions
		$h_1,\dots,h_k$, drawn i.i.d.\ uniformly on $\mathbb{S}^{d-1}$ independently of
		the sample and kept fixed for the observed statistic and for all bootstrap
		replicates, Corollary~\ref{cor:finite-projections} shows that the aggregated statistic
		\[
		T^{(k)}{n,S}=\frac{1}{k}\sum{\ell=1}^{k}\sqrt{n}\,
		\widehat{g}_{n,h_\ell}\bigl(\widehat{Q}^{3}_{S}\bigr)
		\]
		is covered by exactly the same asymptotic and bootstrap theory as the
		single-projection case, with all the cross-direction dependence---in
		particular the common eigenvector shift---already encoded in the single limit
		$\mathcal{L}^{S}$. This is the version actually used in the numerical work: the
		simulation study of Section~\ref{sec:simus} and the application of
		Section~\ref{sec:forex-application} report results for $k\in\{1,10,100\}$, and the
		power gain from $k=1$ to $k=10$ is substantial, while $k=10$ and $k=100$ are
		nearly indistinguishable.
		
	\end{remark}	
	
	\begin{remark} \label{rem:sample_splitting}
		The sample splitting procedure involves three balanced subsamples: $\aleph_1, \aleph_2$, and $\aleph_3$. Alternative schemes could be considered, such as allocating a larger sample size to the estimation of the principal directions or to the two-sample test. Optimizing this splitting criterion is beyond the scope of this paper, as it is not clear how an optimal balance should be determined. For instance, under the alternative hypothesis, since there is no true symmetry, reducing the variance of the principal components is less critical than increasing the sample size for the Kolmogorov--Smirnov test.
	\end{remark}

\section{Some particular cases 
}\label{sec:axial}

Three instances of the general theory deserve a separate discussion: reflection across a line, reflection across a hyperplane, and central symmetry. The first one, the one-dimensional case, is arguably the most important in applications and is the case that motivated this work. The second one is the natural formulation when a single direction carries the meaning of the problem, and it is the one used in the real-data application of Section~\ref{sec:forex-application}. We show here how the general results of Sections~\ref{sec:princomp}--\ref{sec:mainth} specialize to each of them.

Throughout, it is useful to keep in mind that the mirror subspace \(V\) is the set of points that the reflection leaves fixed, while the orthogonal complement \(V^\perp\) is the part whose sign is reversed. The axial and the hyperplane cases are complementary in this respect: the former fixes a single direction and flips the remaining \(d-1\), while the latter flips a single direction and fixes the remaining \(d-1\).

\subsection{The axial (line-reflection) case}

Take \(\mathcal S=\{\{i\}:i=1,\dots,d\}\), so that each candidate is a singleton \(S=\{i\}\). Then \(\PS_{\{i\}}=u_iu_i^\top\) and the reflection is the axial reflection \(\QS_{\{i\}}=2u_iu_i^\top-I_d=:R_{u_i}\), which fixes the line \(\operatorname{span}(u_i)\) (eigenvalue \(+1\)) and reverses \(u_i^\perp\) (eigenvalue \(-1\)). The random vector \(X\) is axially symmetric about \(u_0\in\mathbb S^{d-1}\) if \(X-\mu\overset{\mathcal L}{=}R_{u_0}(X-\mu)\); note that \(R_{u_0}=R_{-u_0}\), so the axis \(\operatorname{span}(u_0)\) does not depend on the sign of \(u_0\). Proposition~\ref{prop:commute} then reduces to the statement that any symmetry axis must be an eigenvector of \(\Sigma\), and the null hypothesis \eqref{hs} becomes
\[
\begin{cases}
	H_0: \text{there exists } u_0 \in \mathbb{S}^{d-1}: X - \mathbb{E}[X] \overset{\mathcal{L}}{=} R_{u_0}(X - \mathbb{E}[X]),  \\
	H_1:   H_0 \text{ is false},
\end{cases}
\]
equivalently \(H_0=\cup_{i=1}^d H_{0,i}\) with \(H_{0,i}:X-\mathbb E[X]\overset{\mathcal L}{=}R_{u_i}(X-\mathbb E[X])\).

\begin{example}
	For \(u=e_1=(1,0,\dots,0)^\top\), with \(\mathbf 0\) the \((d-1)\)-dimensional zero vector,
	\[
	R_{e_1}=\begin{pmatrix}
		1 & \mathbf{0}^\top\\
		\mathbf{0} & -I_{d-1}
	\end{pmatrix},
	\]
	i.e., the first coordinate is preserved and the remaining \(d-1\) coordinates are sign-flipped.
\end{example}

In this case the candidate-specific statistic is \(T_{n,i}(h)=\sqrt n\,\widehat g_{n,h}(R_{\widehat u_{i,n}^3})\). Writing \(G_{u_i}\) for \eqref{eq:G-general} with \(\QS_S=R_{u_i}\), \(G_{u_i}(\ga)=2\int_{\mathbb R^d}\ga(x)((u_i^\top x)I_d+xu_i^\top)\nabla f(R_{u_i}x+\mu)\,dx\). Theorem~\ref{teo:mainth1-TA} yields \(\sqrt n\,\widehat g_{n,h}(R_{\widehat u_{i,n}^3})\xrightarrow{\mathcal L}\|\mathbb L_h\|_{\mathcal F_h}\), where
\[
\mathbb L(\ga)=\mathbb L^{(1)}(\ga)-\mathbb L^{(2)}(\ga)-G_{u_i}(\ga)^\top Z_i^3,
\qquad Z_i^3\sim\mathcal N\bigl(0,3P(\psi_i\psi_i^\top)\bigr),
\]
with \(\psi_i(x)=\sum_{j\neq i}\frac{(u_j^\top(x-\mu))(u_i^\top(x-\mu))}{\lambda_i-\lambda_j}u_j.\) Correspondingly, the covariance kernel \eqref{eq:cov-kernel} reduces to
\begin{multline*}
	\mathrm{Cov}\big(\mathbb L_h(\ga_1),\mathbb L_h(\ga_2)\big)
	=
	6\Big(P^c(\ga_1\ga_2)-P^c\ga_1 P^c\ga_2
	+s(\ga_1)^\top \Sigma s(\ga_2)\\
	+s(\ga_1)^\top m(\ga_2)
	+s(\ga_2)^\top m(\ga_1)\Big)\\
	+3\,G_{u_i}(\ga_1)^\top P(\psi_i\psi_i^\top) G_{u_i}(\ga_2),
\end{multline*}
recovering the single-direction formulas. The bootstrap of Section~\ref{subsec:bootstrap-validity-romano} symmetrizes the sample using the affine axial reflection \(A_{\widehat\mu_n,R_{\widehat u_{i,n}^3}}\), and Theorems~\ref{thm:bootstrap}--\ref{thm:power}, together with Propositions~\ref{prop:smoothed-bootstrap-valid} and~\ref{prop:smoothed-bootstrap-alt} for the concrete smoothed-bootstrap implementation, give the validity and consistency of the resulting global axial-symmetry test under the stated regularity and \textup{(SB)} assumptions. This is the procedure implemented in the simulation study of Section~\ref{sec:simus}.

\subsection{Hyperplane (Householder) symmetry}\label{subsec:hyperplane}

The complementary instance is \(S=\{1,\dots,d\}\setminus\{i\}\), of cardinality \(d-1\). The mirror subspace is then the hyperplane \(V_S=\operatorname{span}(u_j:j\neq i)=u_i^\perp\) and the reflection is the Householder matrix \(\QS_S=I_d-2u_iu_i^\top=:H_{u_i}\). It fixes the hyperplane \(u_i^\perp\) and reverses the normal direction \(u_i\). In words, it changes the sign of the \(i\)-th principal component and leaves the other \(d-1\) components untouched.

\begin{example}
	For \(u=e_1\),
	\[
	H_{e_1}=\begin{pmatrix}
		-1 & \mathbf{0}^\top\\
		\mathbf{0} & I_{d-1}
	\end{pmatrix},
	\]
	i.e., the first coordinate is sign-flipped and the remaining \(d-1\) coordinates are preserved. Compare with \(R_{e_1}\) above: the roles of the two blocks are exchanged.
\end{example}

The hypothesis has a simple reading. Write \(S_i=u_i^\top(X-\mu)\) for the \(i\)-th principal component and \(W_i=(I_d-u_iu_i^\top)(X-\mu)\) for the remaining part. Then invariance under \(H_{u_i}\) is
\[
(S_i,W_i)\overset{\mathcal L}{=}(-S_i,W_i).
\]
A test of hyperplane symmetry therefore asks two questions at the same time: whether the single component \(S_i\) is symmetric around zero, and whether the way \(S_i\) interacts with the remaining components is also symmetric. This is the natural formulation when one specific direction carries the meaning of the problem and the interest lies in the sign of that direction, as in the application of Section~\ref{sec:forex-application}.

The two reflections are related by \(H_{u_i}=-R_{u_i}\), as already noted in Section~\ref{sec:setup}. Consequently, axial symmetry along \(u_i\) and hyperplane symmetry across \(u_i^\perp\) differ by a central symmetry. If a distribution has two of these three symmetries, then it also has the third one. In particular, the two invariances coincide when \(X-\mu\) is centrally symmetric, and otherwise neither of them implies the other. They are also different questions in practice: the axial hypothesis fixes \(S_i\) and flips \(W_i\), while the hyperplane hypothesis does the opposite. In either case, by Proposition~\ref{prop:commute}, the normal \(u_i\) to a symmetry hyperplane must be an eigenvector of \(\Sigma\).

Taking \(\mathcal S=\{\{1,\dots,d\}\setminus\{i\}:i=1,\dots,d\}\) turns the general procedure into a test for hyperplane symmetry about an unspecified normal direction. When the direction of interest is determined by the problem, one may instead prespecify a single candidate. The global test then reduces to that candidate and no correction for multiplicity is needed. This is what we do in Section~\ref{sec:forex-application}.

The implementation requires one remark. The mirror subspace has dimension \(d-1\), so the general linearization \eqref{eq:limit-representation} involves \(d-1\) eigenvector terms. Since \(\QS_S=I_d-2u_iu_i^\top\) depends only on the single normal \(u_i\), one may equivalently estimate that one eigenvector and use \(\widehat\QS_S=I_d-2\widehat u_{i,n}\widehat u_{i,n}^\top\). Indeed, by the orthonormality constraint \(\sum_j u_ju_j^\top=I_d\), the perturbation \(d\QS_S=-2(du_i\,u_i^\top+u_i\,du_i^\top)\) obtained from \(u_i\) coincides with the one obtained by summing over the other \(d-1\) eigenvectors. The two linearizations agree, so only \(u_i\) need be estimated in practice. All other steps are identical.

\subsection{Central symmetry}

The empty index set \(S=\varnothing\) gives \(\QS_\varnothing=-I_d\), i.e.\ central symmetry \(X-\mu\overset{\mathcal L}{=}-(X-\mu)\). This reflection does not depend on the eigenvectors at all (the mirror subspace is \(\{0\}\)), so no principal-component estimation is required for this candidate and the linearization term in \eqref{eq:limit-representation} is absent: the limit reduces to \(\mathbb L=\mathbb L^{(1)}-\mathbb L^{(2)}\). Central symmetry can thus be tested with the same split-sample statistic, using \(Q=-I_d\) in \eqref{empirical}, and is the simplest member of the family \eqref{eq:candidate-family}. As a convention, when \(S=\varnothing\) all sums over \(i\in S\) are understood as zero and \(\QS_{S,n}=-I_d\) deterministically; no principal component is estimated, so the sample is split into just two blocks of sizes \(n/2\). Additionally, in the limit expression the factors \(\sqrt3\) and \(6\) must then be replaced by \(\sqrt2\) and \(4\).

	\section{Simulation study} \label{sec:simus}
	We evaluate the finite-sample performance of the proposed test in the axial (line-reflection) case of Section~\ref{sec:axial}, i.e.\ with \(\mathcal S=\{\{i\}:i=1,\dots,d\}\). We sample from the mixture \(\frac12 P_1+\frac12 P_2O_\theta^{-1}\), where \(P_1\) is uniform on the rectangle with vertices \((\pm1,\pm3)\), \(P_2\) is uniform on the square with vertices \((\pm1/2,2)\) and \((\pm1/2,3)\), and \(O_\theta\) denotes rotation by angle \(-\theta\). We consider \(n\in\{300,500,1000,5000\}\) and \(\theta=k/3\), \(k=0,1,2,3\). \(\theta=0\) yields an axially symmetric mixture, so it is a null configurations, whereas \(\theta=1/3,2/3, 1\) are alternatives. Figure~\ref{fig:conjuntos} shows realizations of this mixture, together with the empirical principal components.
	
	Each configuration was replicated 1000 times, using \(B=500\) bootstrap resamples per replication. Within each replication, the random projection directions were drawn once and kept fixed for the observed statistic and all bootstrap replicates, and the candidate reflection was re-estimated from the third block of every bootstrap sample, as in Algorithm~1. For each candidate \(S\), the bootstrap \(p\)-value \(p_{n,S}\) was computed from the \(B\) bootstrap replicates, and the global \(p\)-value \(p_n^{\mathrm{glob}}=\max_{S\in\mathcal S}p_{n,S}\) was used for the rejection decision.
	The bootstrap draws in Step~3 of Algorithm~1 use an isotropic Gaussian kernel, \(K=\mathcal N(0,I_d)\), with bandwidth \(b_n=n^{-1/5}\). For this mixture model (\(d=2\)), this choice satisfies \(nb_n^{d+2}\to\infty\) with room to spare, since \(1/5<1/(d+2)=1/4\) here; this is unlike the boundary case \(1/(d+2)=1/5\) encountered at \(d=3\) in the real-data application of Section~\ref{sec:forex-application}, where the bandwidth rule and kernel choice are adjusted accordingly (see the discussion there). The bootstrap \(p\)-value is computed in its finite-\(B\) form \(\widehat p_{n,S}^{\,B}=\bigl(1+\sum_{b=1}^{B}\mathbf 1\{T_{n,S}^{\dagger,b}(h)\ge T_{n,S}(h)\}\bigr)/(B+1)\), where \(T_{n,S}^{\dagger,1},\dots,T_{n,S}^{\dagger,B}\) are the bootstrap replicates, and the global \(p\)-value is \(\widehat p_n^{\mathrm{glob},B}=\max_{S}\widehat p_{n,S}^{\,B}\); the \(+1\) in numerator and denominator is the standard, slightly conservative finite-\(B\) approximation to the conditional bootstrap tail probability. Figure~\ref{fig:potencias} reports the estimated power, computed as the Monte Carlo rejection proportion, together with boxplots of the bootstrap \(p\)-values. The rejection probability at the null configuration \(\theta=0\) decreases toward the nominal level as \(n\) grows and increases sharply at the intermediate and largest angles, where the \(p\)-values concentrate near zero.



	Motivated by Remark~\ref{rem:projections}, we also considered aggregated statistics based on \(k=10\) and \(k=100\) random directions. Power increases substantially from \(k=1\) to \(k=10\), while the curves for \(k=10\) and \(k=100\) are nearly indistinguishable, suggesting that most of the gain is already achieved with a modest number of projections. By Corollary~\ref{cor:finite-projections}, for any fixed collection of sampled directions the aggregated statistics with \(k=10\) and \(k=100\) are covered by the same asymptotic and bootstrap theory as the single-projection case.
	
	
	\begin{figure}[H]
		\centering
		\includegraphics[width = .7\textwidth]{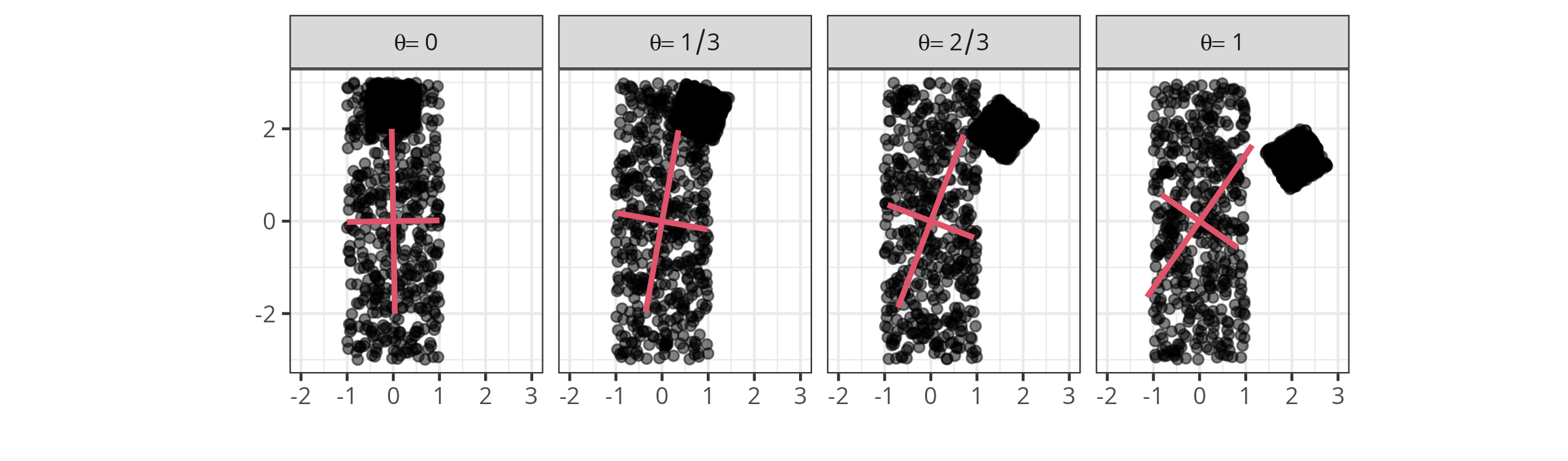}
		\caption{Realizations of $\frac12 P_1 + \frac12 P_2 O^{-1}_\theta$ with $n=1000$ and $\theta = k\pi/3$ for $k \in \{0,1,2,3\}$. The empirical principal components are also shown as solid lines.}
		\label{fig:conjuntos}
	\end{figure}
	
	\begin{figure}[H]
		\centering
		\includegraphics[width = .7\textwidth]{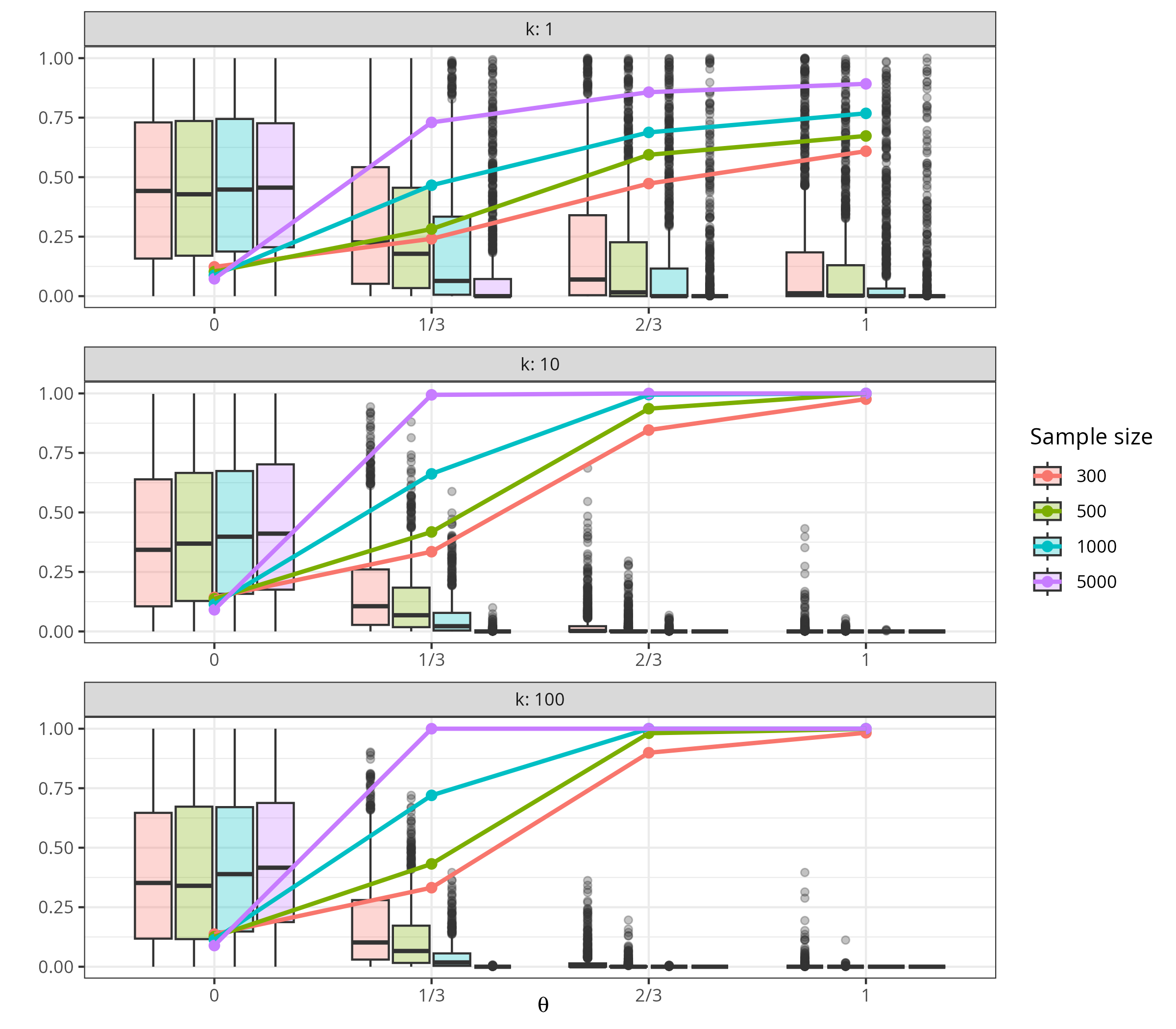}
		\caption{Estimated power curves and empirical p-value distributions under the null and alternative hypotheses for different sample sizes and numbers of random directions. Solid lines and points represent the estimated power based on the Monte Carlo rejection proportions. Boxplots display the corresponding distribution of the p-values across 1000 replications.}
		\label{fig:potencias}
	\end{figure}


\section{Real-data application: crash risk in carry trade exchange rates}
\label{sec:forex-application}

\paragraph{Data and motivation}

In this section we apply our methodology to a dataset of foreign exchange rates. We use daily closing prices for $\mathrm{AUD/JPY}$ (Australian dollar vs.\ Japanese yen), $\mathrm{MXN/JPY}$ (Mexican peso vs.\ Japanese yen), and $\mathrm{NZD/JPY}$ (New Zealand dollar vs.\ Japanese yen), downloaded from Yahoo Finance (tickers \texttt{AUDJPY=X}, \texttt{MXNJPY=X}, \texttt{NZDJPY=X}) for the period January 1, 2015 to January 1, 2026. After removing days with a missing quote for any of the three pairs and taking log-differences, this yields $n=2862$ daily log-returns. These currency pairs are typically used in a well-known financial strategy called the carry trade. In this strategy, investors borrow money in a currency with very low interest rates (the Japanese yen) and invest it in currencies that pay higher interest rates. Such positions usually accumulate small and steady gains during quiet periods. They can also suffer large and sudden losses during episodes of market panic, when many investors try to sell at the same time (a process known as deleveraging). We expect this structural behaviour to produce a strong asymmetry in the underlying distribution.

Our data capture only the daily price changes of the currencies, not the interest earned by the strategy. This application therefore focuses on the exchange-rate risk shared by typical yen-funded investments.

We order the variables as $(\mathrm{AUD/JPY}, \mathrm{MXN/JPY}, \mathrm{NZD/JPY})$. The sample correlations are $0.65, 0.86$ and $0.65$ for the first-second, first-third and second-third pairs of variables respectively, so the three series move closely together. The three marginal distributions show negative sample skewness ($-0.49$, $-0.84$ and $-0.37$) and substantial excess kurtosis. Both features point to the heavy left tails caused by sudden market drops.

Previous symmetry analyses, such as \citet{hudecova2021testing}, have also looked at exchange rates like $\mathrm{AUD/JPY}$, but using directions fixed in advance. Our approach is different: we do not fix the reflection direction beforehand, we estimate it from the covariance structure of this currency basket.

The leading eigenvector of the sample covariance matrix is $\widehat{u}_1 \approx (0.53, 0.71, 0.46)$. All three entries share the same sign and have similar size. Hence $\widehat{u}_1$ describes a market-wide movement in which the three currencies move together against the yen, and we read it as the common risk factor of the carry trade. The three sample eigenvalues are $1.52\times 10^{-4}$, $3.40\times 10^{-5}$ and $6.99\times 10^{-6}$, so this single direction accounts for $78.8\%$ of the total variance, against $17.6\%$ and $3.6\%$ for the other two. The remaining two directions are contrasts between the currencies and have no prespecified financial meaning.

The eigenvalues are also clearly distinct: consecutive ones differ by factors of $4.5$ and $4.9$. The simple-spectrum condition required by our theory is therefore comfortably satisfied in this dataset, and the leading direction is well identified. This is confirmed empirically: $\widehat{u}_1$ barely moves across random splits of the sample, as reported in Section~\ref*{sec:supp-sensitivity} of the supplementary material.

When a daily observation is nearly collinear with this direction, the market is reacting as a single block to global factors. This gives a simple geometric picture of the asymmetry of the strategy. Observations on the positive side of the axis correspond to periods of risk appetite: frequent but small joint appreciations. Movements on the negative side correspond to episodes of panic and simultaneous deleveraging. Since the daily mean log-return is close to zero, falling below the mean is essentially a negative return (for instance $\mathrm{AUD/JPY}_t < \mathrm{AUD/JPY}_{t-1}$, and likewise for the other pairs). Visually, these crashes appear as long vectors stretching into the left tail of the distribution. This is exactly the kind of asymmetry that our test is designed to detect.

\paragraph{The hypothesis under test}

The relevant question here is whether the sign of the common factor matters, so the natural null hypothesis is symmetry with respect to the hyperplane orthogonal to $u_1$. This is the particular case discussed in Section~\ref{subsec:hyperplane}, applied with $d=3$ and $i=1$. We prespecify the candidate family as $\mathcal{S}_{\mathrm{app}}=\bigl\{\{2,3\}\bigr\}$. The mirror subspace is $V_{\{2,3\}}=\operatorname{span}(u_2,u_3)=u_1^{\perp}$ and the corresponding reflection is $Q_{\{2,3\}}=I_3-2u_1u_1^{\top}=H_{u_1}$. The null hypothesis is therefore
$$H_{0,\{2,3\}}:\qquad X-\mu \overset{d}{=} H_{u_1}(X-\mu).$$
Since the candidate family has a single element, the global test reduces to that single candidate: $p_n^{\mathrm{glob}}=p_{n,\{2,3\}}$. In the notation of Section~\ref{subsec:hyperplane}, with $S=u_1^{\top}(X-\mu)$ the common market factor and $W=(I_3-u_1u_1^{\top})(X-\mu)$ the country-specific variation, the hypothesis reads $(S,W)\overset{d}{=}(-S,W)$. The test thus checks whether large joint drops are as likely as large joint rises, and also whether the link between the common factor and the country-specific part is the same on both sides.

The general procedure allows the candidate family to contain all $2^d$ subsets of principal directions. It is natural to ask why we do not scan all of them here. There are three reasons.

First, the question of interest is specific. We do not ask whether the data admit some symmetry or other. We ask whether the common factor of the basket is symmetric, since that is the direction with a financial meaning.

Second, scanning has a statistical cost. As noted in Remark~\ref{rem:global-test}, the intersection structure of the global test ($p_n^{\mathrm{glob}}=\max_S p_{n,S}$) requires no multiplicity correction for validity, but becomes more conservative as the candidate family grows, which reduces power. With a single candidate fixed in advance, our main focus here, this loss of power does not arise.

Third, only one of the three directions has a financial interpretation. As reported above, $\widehat{u}_1$ carries $78.8\%$ of the variance and admits a direct reading as the common factor of the basket. The other two directions are contrasts between the currencies, with $17.6\%$ and $3.6\%$ of the variance. A rejection along one of them would be much harder to translate into a statement about the carry trade.

We stress that this last point is about interpretation and not about estimation error. The precision of an estimated eigenvector is driven by the gaps between eigenvalues relative to their own size, and here all three eigenvalues are well separated: consecutive ones differ by factors of $4.5$ and $4.9$. Accordingly, the coefficients $\sqrt{\lambda_i\lambda_j}/(\lambda_i-\lambda_j)$ that govern the linearization term are $0.61$, $0.23$ and $0.57$ for the pairs $(1,2)$, $(1,3)$ and $(2,3)$. All three directions are thus estimated with comparable accuracy, and the full family could be used here at the cost of the power loss from the intersection structure discussed above.

A related question is why we test hyperplane symmetry and not axial symmetry. The mirror subspace is the set of points that the reflection leaves fixed. Axial symmetry along $u_1$ would keep the common factor untouched and flip the two remaining directions, so it would be a statement about the country-specific components. That is not the question raised by the carry trade. Hyperplane symmetry across $u_1^{\perp}$ flips the sign of the common factor and leaves the rest fixed, which is precisely the crash-risk question. The two hypotheses are also formally different: as noted in Section~\ref{subsec:hyperplane}, they differ by a central symmetry, so neither implies the other.

\paragraph{Implementation and results}

We apply Algorithm~1 with a single random sample split, generated from a fixed seed chosen before inspecting any result.

We use aggregated statistics based on $k\in\{1,10,100\}$ random projection directions. These directions are drawn independently of the data and are kept fixed for the observed statistic and for every bootstrap replicate. Because $k$ is fixed, these statistics are covered by Corollary~\ref{cor:finite-projections}.

For the primary analysis we run a bootstrap with $B=10\,000$ replicates. The draws from the symmetrized pool are smoothed with a $C^\infty$, compactly-supported, orthogonally invariant kernel $K$: $X_b=\text{draw}+b_nZ$, where $Z\sim K$ is drawn on the ball $\{\|z\|<1\}$ (a smooth bump kernel, $K(z)\propto\exp(-1/(1-\|z\|^2))$ for $\|z\|<1$) and rescaled to satisfy $\int zz^\top K(z)\,dz=I_3$, as required by \textup{(SB)}. We use this compactly-supported kernel, rather than a Gaussian, because \textup{(SB)} requires $K\in C_c^\infty(\mathbb R^3)$. The bandwidth is $b_n=0.35\cdot s_n\cdot n^{-1/6}$, where $s_n$ is the average standard deviation of the daily log-returns. This exponent, rather than the more familiar $n^{-1/5}$, is chosen because \textup{(SB)} requires $nb_n^{d+2}\to\infty$ with $d=3$, i.e.\ $nb_n^5\to\infty$: at $n^{-1/5}$ this product is asymptotically constant, exactly on the boundary of validity, whereas $n^{-1/6}$ satisfies the condition with room to spare ($nb_n^5=n^{1/6}\to\infty$). The bootstrap $p$-value is $\widehat{p}_n^{\,B}=\bigl(1+\#\{b: T_b\ge T_{\mathrm{obs}}\}\bigr)/(B+1)$.

Table~\ref{tab:final-run} reports the results. The null hypothesis $H_{0,\{2,3\}}$ is rejected at the $5\%$ level for the three values of $k$. With $B=10\,000$ the smallest attainable $p$-value is $1/10\,001\approx 10^{-4}$. The reported $p$-values ($0.0141$, $0.0017$ and $0.0029$) are well above that floor, so they can be compared across values of $k$.

\begin{table}[ht]
\centering
\begin{tabular}{cccc}
\toprule
$k$ & $T_{\mathrm{obs}}$ & boot.\ $q_{0.95}$ & $p$-value \\
\midrule
1   & 3.0842 & 2.6917 & \textbf{0.0141} \\
10  & 2.9328 & 2.3833 & \textbf{0.0017} \\
100 & 2.7024 & 2.2801 & \textbf{0.0029} \\
\bottomrule
\end{tabular}
\caption{Final run ($B=10\,000$ bootstrap replicates): observed statistic, bootstrap $95\%$ quantile, and bootstrap $p$-value, for each aggregation level $k$.}
\label{tab:final-run}
\end{table}

The procedure involves two user choices that are not part of the hypothesis: the random split of the sample and the bandwidth of the smoothed bootstrap. Checking that neither choice drives the conclusion is a routine diagnostic, to be run in any application of the method and not only here. We report the details in Section~\ref*{sec:supp-sensitivity} of the supplementary material and summarize them now. Across ten independent random splits, the estimated common direction moves by at most $2.3^{\circ}$, as expected from the well separated eigenvalues reported above. The tests with $k=10$ and $k=100$ reject at the $5\%$ level in all ten splits. Varying the bandwidth multiplier from $0.1$ to $2.0$ times its base value (so $b_n/s_n$ ranges from $0.0265$ to $0.5308$), and recomputing the test with $k=100$ and $B=300$ replicates, leaves the $p$-value near the minimum attainable value ($1/301\approx0.0033$) in every case.

\paragraph{Interpretation and limitations}

The rejection is consistent with the financial narrative of the carry trade: the strategy is exposed to a severe crash risk along a single common direction. The test also has natural limitations. It shows that the asymmetry is present, but it does not identify which investors trigger a crash, nor does it establish the chain of events during a sell-off, nor does it prove that crash risk is the only source of the profitability of the strategy. In addition, our theory assumes independent observations, while financial markets often go through prolonged periods of turbulence. Working with log-returns at least removes the instability of the raw exchange-rate levels and delivers a covariance-stationary series. The i.i.d. bootstrap we use could in principle be replaced by a block or stationary bootstrap to accommodate such temporal dependence directly, but we leave this extension for future work.

\section{Discussion and concluding remarks}\label{sec:discussion}

We have developed an asymptotically valid and consistent bootstrap test for invariance of a \(d\)-dimensional distribution under an orthogonal reflection about an unknown mirror subspace, of which only the dimension is restricted to a prespecified set \(D\subseteq\{0,1,\dots,d-1\}\). The solution combines three ingredients that, we believe, form a reusable template for testing invariance under group actions indexed by unknown parameters. First, a structural identity links the unknown parameter to an estimable population feature: any distributional symmetry must commute with $\Sigma$, and under a simple spectrum this reduces the parameter space from the orthogonal group to the finite family of principal-subspace reflections. Second, sample splitting decouples the estimation of the parameter from the empirical process defining the statistic, and makes the linearization of the plug-in transformation tractable. Third, we resample from a data-dependent law that satisfies the null invariance exactly: the empirical Haar average over the estimated action (for \(G=\mathbb Z_2\), the symmetrized empirical law), smoothed to meet the regularity required by the triangular-array limit theorem. Extending the approach to richer groups (rotational or cylindrical symmetry, elliptical symmetry after an unknown affine transformation) would require a group-specific reduction step for the first ingredient. For continuous groups the candidate family is no longer finite, and calibrating over an infinite family of estimated representations is the main open challenge.

The optimal allocation of the sample among the three splits (Remark~\ref{rem:sample_splitting}) remains as an open question within the present framework as well.

	
\section*{Supplementary Material}
This supplementary material contains all proofs: the proofs of Propositions~\ref{prop:commute}--\ref{prop:smoothed-bootstrap-alt}, Theorems~\ref{teo:mainth1-TA}, \ref{thm:bootstrap} and \ref{thm:power}, and Corollary~\ref{cor:finite-projections}, the derivation of the covariance kernel \eqref{eq:cov-kernel}, the detailed verification of the regularity conditions for the smoothed symmetrized bootstrap law, and the statements and proofs of all the auxiliary lemmas used in these arguments.

\setcounter{section}{0}
\renewcommand{\thesection}{S\arabic{section}}
\renewcommand{\theHsection}{S\arabic{section}}
\numberwithin{equation}{section}

	\section{Notation and conventions}

Results numbered with the prefix \textup{S} (Lemma~\ref{lem:muSigma}, Section~\ref{sec:supp-aux}, \eqref{eq:4-terms-TA-corrected}, etc.) belong to this Supplementary Material; all other references, such as ``Theorem~\ref{teo:mainth1-TA}'', ``Proposition~\ref{prop:commute}'' or ``\eqref{eq:cov-kernel}'', point to the main paper. 
Throughout we fix \(S\in\mathcal S\) and use the following notation. We write \(\aleph_{n,1},\aleph_{n,2},\aleph_{n,3}\) both for the three subsamples and for the corresponding partition of \(\{1,\ldots,n\}\), and \(\QS_{S,n}=2\sum_{i\in S}u_{i,n}u_{i,n}^\top-I_d\), \(\QS_S=2\sum_{i\in S}u_iu_i^\top-I_d\), \(\widehat\QS_S^3=2\sum_{i\in S}\widehat u_{i,n}^3(\widehat u_{i,n}^3)^\top-I_d\) -- the reflection associated with the \(n\)-th law, the limiting reflection and the plug-in reflection, respectively -- and \(\QS_{S,W}=2\sum_{i\in S}w_iw_i^\top-I_d\) for a frame \(W=(w_i)_{i\in S}\). We write \(P_n^c\) for the law of \(X_{n,1}-\mu_n\), \(\widehat P_n^{c,r}\) for its empirical counterpart over \(\aleph_{n,r}\), and \(\rho_n(\ga,\ga')=\|\ga-\ga'\|_{L_2(P_n^c)}\); \((Q\ga)(x)=\ga(Qx)\), so \(Q\ga_{a,b}=\ga_{Q^\top a,b}\) and \(Q^\top=Q\) for our reflections.

\section{Proofs of the structural results}

\begin{proof}[Proof of Proposition~\ref{prop:commute}]
	If $X-\mu\overset{\mathcal L}{=}Q(X-\mu)$, then, since $Q$ is linear and $\mathbb E[Q(X-\mu)]=0$, $\Sigma=\cov(X-\mu)=\cov(Q(X-\mu))=Q\Sigma Q^\top$. Because $Q$ is orthogonal, $Q^\top=Q^{-1}$, and right-multiplying by $Q$ gives $\Sigma Q=Q\Sigma$; that is, $Q$ commutes with $\Sigma$.
	
	Assume now that $\Sigma$ has simple spectrum. Fix $i$. From $Q\Sigma=\Sigma Q$, $\Sigma(Qu_i)=Q\Sigma u_i=\lambda_i (Qu_i)$, so $Qu_i$ is an eigenvector of $\Sigma$ with eigenvalue $\lambda_i$. Since $\lambda_i$ is simple, the corresponding eigenspace is $\operatorname{span}(u_i)$, whence $Qu_i\in\operatorname{span}(u_i)$. As $Q$ is orthogonal, $\|Qu_i\|=1$, so $Qu_i=\epsilon_i u_i$ with $\epsilon_i\in\{-1,+1\}$. Writing $Q$ in the basis $\{u_i\}$, $Q=\sum_i\epsilon_i u_iu_i^\top$; setting $S=\{i:\epsilon_i=+1\}$ and using $\sum_i u_iu_i^\top=I_d$, $Q=\sum_{i\in S}u_iu_i^\top-\sum_{i\notin S}u_iu_i^\top=2\PS_S-I_d$, which is $\QS_{V_S}$ from \eqref{EqDefQ}.
\end{proof}

\begin{proof}[Proof of Corollary~\ref{Coro4.2}]
	By the first part of Proposition~\ref{prop:commute} (which requires only invariance), each \(R_{u_j}\) commutes with \(\Sigma\). Hence \(R_{u_j}(\Sigma u_j)=\Sigma(R_{u_j}u_j)=\Sigma u_j\), so \(\Sigma u_j\) lies in the \(+1\)-eigenspace of \(R_{u_j}\), which is \(\operatorname{span}(u_j)\); therefore each \(u_j\) is an eigenvector of \(\Sigma\). Since eigenvectors for distinct eigenvalues are orthogonal, and \(u_{j-1},u_j\) are not orthogonal for \(j=2,\dots,k\), all \(u_1,\dots,u_k\) share the same eigenvalue \(\delta\ge0\). Thus \(\Sigma v=\delta v\) for every \(v\in W\), and \(\cov(\PS_W(X-\mu))=\PS_W\Sigma\PS_W=\delta\PS_W\), since \(\PS_W x\in W\).
\end{proof}

\section{Auxiliary results}\label{sec:supp-aux}

This section collects the auxiliary lemmas used in the proofs, each one followed by its own proof. The proofs of Lemmas~\ref{lem:muSigma}, \ref{lem:uniform-entropy-vw} and \ref{lem:lemaux-TA-global} do not involve the reflection map; the two lemmas that genuinely use the mirror structure -- the principal-component linearization (Lemma~\ref{lem:eigenvector-linearization}) and the differentiability of the reflection map (Lemma~\ref{lem:dif-QRu-TA}) -- are stated and proved for a general mirror subspace, the axial case being recovered when \(|S|=1\).

\begin{lemma}\label{lem:muSigma}
	Assume \(d_{\mathcal F}(P_n,P)\to 0\) and \textup{(T1)}. Let \(X_n\sim P_n\), \(X\sim P\). Set \(\mu_n=\mathbb{E}[X_n]\), \(\mu=\mathbb{E}[X]\), \(\Sigma_n=\cov(X_n)\), \(\Sigma=\cov(X)\). Then \(\mu_n\to\mu\), \(\Sigma_n\to\Sigma\), and for every \(u\in\mathbb S^{d-1}\) and \(s\in[1,2]\), \(\sup_{n}\mathbb{E}|u^\top(X_n-\mu_n)|^s<\infty\) and \(\sup_{n,u}\mathbb{P}(|u^\top(X_n-\mu_n)|>M)\le C M^{-2}\) for a finite constant \(C\) independent of \(n,u,M\).
\end{lemma}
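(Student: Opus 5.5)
The proof combines uniform integrability from (T1) with weak convergence coming from $d_{\mathcal F}(P_n,P)\to0$. Since $\mathcal F$ contains all half-space indicators, $d_{\mathcal F}(P_n,P)\to0$ gives, for every $a\in\mathbb S^{d-1}$, uniform convergence of the distribution functions of $a^\top X_n$ to that of $a^\top X$. Hence $a^\top X_n\xrightarrow{\mathcal L}a^\top X$ for every direction $a$, and by the Cram\'er--Wold device $X_n\xrightarrow{\mathcal L}X$.

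\textbf{Step 1 (moments converge).} By (T1), $\sup_n\mathbb E\|X_n\|^{4+\eta}<\infty$. This bound makes $\{X_n\}$, $\{X_nX_n^\top\}$ and $\{\|X_n\|^2\}$ uniformly integrable, since a uniform bound on a moment of order strictly greater than $2$ implies uniform integrability of the second-order moments. Weak convergence combined with uniform integrability then gives $\mathbb E X_n\to\mathbb E X$ and $\mathbb E X_nX_n^\top\to\mathbb E XX^\top$. The latter uses the Skorokhod representation, or Vitali's theorem applied to continuous functions of polynomial growth. Consequently $\mu_n\to\mu$ and $\Sigma_n=\mathbb E X_nX_n^\top-\mu_n\mu_n^\top\to\Sigma$. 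As a byproduct, $\mathbb E\|X\|^{4+\eta}<\infty$ by Fatou's lemma, so all limiting quantities are finite.

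\textbf{Step 2 (uniform projected moments).} The sequence $\mu_n$ is bounded, because $\|\mu_n\|\le(\mathbb E\|X_n\|^2)^{1/2}\le C$. Therefore
\[
|u^\top(X_n-\mu_n)|\le\|X_n\|+\|\mu_n\|,
\qquad\text{so}\qquad
\mathbb E|u^\top(X_n-\mu_n)|^2\le 2\,\mathbb E\|X_n\|^2+2\|\mu_n\|^2\le C',
\]
uniformly in $n$ and $u$. For $s\in[1,2]$, Jensen's (or Lyapunov's) inequality gives $\mathbb E|\cdot|^s\le(\mathbb E|\cdot|^2)^{s/2}\le\max(1,C')$. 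The tail bound then follows from Chebyshev's inequality:
\[
\mathbb P\bigl(|u^\top(X_n-\mu_n)|>M\bigr)\le C'/M^2,
\]
with $C'$ independent of $n$, $u$ and $M$. Equivalently, one can use $u^\top\Sigma_nu\le\|\Sigma_n\|_{\op}$, which is bounded because $\Sigma_n$ converges.

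\textbf{Main obstacle.} The only point needing care is deriving weak convergence from $d_{\mathcal F}$. I would state the Cram\'er--Wold step explicitly: the distribution functions of the projections converge at every point, not only at continuity points, because half-space indicators with arbitrary thresholds belong to $\mathcal F$. The rest is standard uniform integrability.
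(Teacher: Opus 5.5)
Your proposal is correct and follows essentially the same route as the paper: you get weak convergence from $d_{\mathcal F}(P_n,P)\to0$, spelled out via Cram\'er--Wold where the paper simply notes that half-spaces are convergence-determining; you then use uniform integrability from \textup{(T1)} to obtain convergence of the first and second moments. Finally, like the paper, you derive the projected bounds from Lyapunov's and Chebyshev's inequalities together with $\sup_n\mathbb E\|X_n\|^2<\infty$ and $\sup_n\|\mu_n\|<\infty$; your bounds are also uniform in $u$, which slightly exceeds what the moment claim requires.
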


\begin{proof}[Proof of Lemma~\ref{lem:muSigma}]
	Since half-spaces are convergence-determining, \(d_{\mathcal F}(P_n,P)\to 0\) implies \(P_n\xrightarrow{\mathcal L}P\). The bound \(\sup_n \mathbb{E}\|X_n\|^4<\infty\) from \textup{(T1)} makes \(\{\|X_n\|^p\}\) uniformly integrable for \(p\le2\); convergence of moments \cite[Thm.~3.5]{billingsley1999} gives \(\mu_n\to\mu\) and, entrywise, \(\Sigma_n\to\Sigma\). The uniform bounds on \(\mathbb{E}|u^\top(X_n-\mu_n)|^s\) and the tail estimate follow from Lyapunov's and Markov's inequalities together with \(\sup_n\mathbb{E}\|X_n\|^2<\infty\) and \(\sup_n\|\mu_n\|<\infty\).
\end{proof}

\begin{lemma}\label{lem:eigenvector-linearization}
	Assume \(d_{\mathcal F}(P_n,P)\to0\) and \textup{(T1)}--\textup{(T4)}. Let \(\widehat \Sigma_n^3\) be the sample covariance over \(\aleph_{n,3}\), \(m_n\sim n/3\), and for \(i=1,\dots,d\) let \(\widehat u_{i,n}^3\) be its \(i\)-th unit eigenvector with \(\langle \widehat u_{i,n}^3,u_{i,n}\rangle\ge0\). Then, for every \(i\), \(\sqrt n (\widehat u_{i,n}^3-u_{i,n})=\sqrt n (\widehat  P_n^{ 3}-P_n)\psi_{i,n}+o_{\mathbb P}(1)\), and for any \(T\subseteq\{1,\dots,d\}\), \(\bigl(\sqrt n (\widehat u_{i,n}^3-u_{i,n})\bigr)_{i\in T}\xrightarrow{\mathcal L}(Z_i^{3})_{i\in T}\), a centered Gaussian vector with \(\mathrm{Cov}(Z_i^3,Z_j^3)=3\,P(\psi_i\psi_j^\top)\), where
	\[
	\psi_{i,n}(x)=\sum_{j\neq i}\frac{(u_{j,n}^\top(x-\mu_n))(u_{i,n}^\top(x-\mu_n))}{\lambda_{i,n}-\lambda_{j,n}} u_{j,n},
	\qquad
	\psi_i(x)=\sum_{j\neq i}\frac{(u_j^\top(x-\mu))(u_i^\top(x-\mu))}{\lambda_i-\lambda_j} u_j.
	\]
\end{lemma}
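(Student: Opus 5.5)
The plan is a standard first-order perturbation argument for simple eigenvectors, carried out along the triangular array. It has three layers: centring of the sample covariance, eigenvector perturbation, and a Lindeberg CLT.

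\textbf{Step 1: linearise the sample covariance.} Write
\[
\widehat\Sigma_n^3-\Sigma_n=(\widehat P_n^3-P_n)\bigl[(x-\mu_n)(x-\mu_n)^\top\bigr]-(\bar X_n^3-\mu_n)(\bar X_n^3-\mu_n)^\top .
\]
Under \textup{(T1)}, $\sup_n\mathbb E\|X_{n,1}\|^4<\infty$. Chebyshev then gives $\sqrt n(\bar X_n^3-\mu_n)=O_{\mathbb P}(1)$, so the quadratic term is $O_{\mathbb P}(n^{-1})$. It also gives $E_n:=\widehat\Sigma_n^3-\Sigma_n=O_{\mathbb P}(n^{-1/2})$ in operator norm.

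\textbf{Step 2: eigenvector perturbation.} By \textup{(T4)} and Lemma~\ref{lem:muSigma}, the eigenvalue gaps $\lambda_{i,n}-\lambda_{j,n}$ are bounded away from zero for large $n$, uniformly in $n$. The resolvent/Davis--Kahan expansion for a simple eigenvalue then gives
\[
\widehat u_{i,n}^3-u_{i,n}=\sum_{j\ne i}\frac{u_{j,n}^\top E_n u_{i,n}}{\lambda_{i,n}-\lambda_{j,n}}\,u_{j,n}+r_n,\qquad \|r_n\|\le C\|E_n\|_{\op}^2 .
\]
The constant $C$ depends only on the minimal gap. The sign convention $\langle\widehat u_{i,n}^3,u_{i,n}\rangle\ge0$ fixes the branch. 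The component of $\widehat u_{i,n}^3-u_{i,n}$ along $u_{i,n}$ is $O(\|E_n\|^2)$ by normalisation.

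Substituting Step 1, the leading term equals $(\widehat P_n^3-P_n)\psi_{i,n}$ up to $O_{\mathbb P}(n^{-1})$. Indeed, $P_n\psi_{i,n}=0$ since $u_{j,n}^\top\Sigma_nu_{i,n}=0$ for $j\neq i$, so centring is harmless. Hence $\sqrt n(\widehat u_{i,n}^3-u_{i,n})=\sqrt n(\widehat P_n^3-P_n)\psi_{i,n}+o_{\mathbb P}(1)$. This is the first claim.

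\textbf{Step 3: CLT along the array.} Since $|\aleph_{n,3}|\sim n/3$,
\[
\sqrt n(\widehat P_n^3-P_n)\psi_{i,n}=\sqrt{n/|\aleph_{n,3}|}\;|\aleph_{n,3}|^{-1/2}\sum_{j\in\aleph_{n,3}}\psi_{i,n}(X_{n,j}),
\]
a normalised sum of i.i.d.\ centred vectors times a factor tending to $\sqrt3$. I would apply the Lindeberg--Feller CLT with the Cram\'er--Wold device to the stacked vector $(\psi_{i,n})_{i\in T}$. The Lindeberg condition follows from a Lyapunov bound: $\|\psi_{i,n}(x)\|\le C\|x-\mu_n\|^2$ and \textup{(T1)} gives a uniformly bounded $(2+\eta/2)$-th moment of $\|\psi_{i,n}\|$.

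It remains to show that the covariances converge, $P_n(\psi_{i,n}\psi_{j,n}^\top)\to P(\psi_i\psi_j^\top)$. The eigenpairs converge, $(\lambda_{i,n},u_{i,n})\to(\lambda_i,u_i)$, by \textup{(T4)} and continuity of simple eigenpairs with the stated sign choice, and $\mu_n\to\mu$ by Lemma~\ref{lem:muSigma}. So $\psi_{i,n}\to\psi_i$ uniformly on compacts, with a polynomial envelope of degree $2$. The products $\psi_{i,n}\psi_{j,n}^\top$ are quartic, and their uniform integrability comes from the $4+\eta$ moments in \textup{(T1)}. Together with $P_n\xrightarrow{\mathcal L}P$ (Lemma~\ref{lem:muSigma}), a Skorokhod/continuous-mapping argument gives the limit. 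This yields $\mathrm{Cov}(Z_i^3,Z_j^3)=3P(\psi_i\psi_j^\top)$.

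\textbf{Main obstacle.} I expect the delicate point to be this moment/uniform-integrability step. The influence functions are quadratic, so the fourth moments must converge, and weak convergence alone does not give that; the extra $\eta$ in \textup{(T1)} is exactly what supplies it. A secondary point is making the remainder bound in Step 2 uniform in $n$, which the uniform spectral gap handles.
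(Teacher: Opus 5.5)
Your proposal is correct and follows essentially the same route as the paper. The shared steps are: the decomposition of $E_n$ into a centred average minus $\Delta_n\Delta_n^\top$; the first-order simple-eigenvalue expansion with an $O(\|E_n\|_{\op}^2)$ remainder and the observation $P_n\psi_{i,n}=0$; and a Lyapunov CLT using a $(2+\eta/2)$-moment bound together with $|\aleph_{n,3}|/n\to1/3$. The only difference is minor: you get $P_n(\psi_{i,n}\psi_{j,n}^\top)\to P(\psi_i\psi_j^\top)$ from a Skorokhod representation plus uniform integrability from the $(4+\eta)$-moments, whereas the paper truncates at radius $R$ and bounds the tail with the same moment condition.
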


\begin{proof}[Proof of Lemma~\ref{lem:eigenvector-linearization}]
	Write $E_n=\widehat \Sigma_n^3-\Sigma_n,$ $\delta_{i,n}=\min_{j\neq i}|\lambda_{i,n}-\lambda_{j,n}|,$ $\widehat  P_n^{ 3}f=\frac1{n-2m_n}\sum_{k\in\aleph_{n,3}}f(X_{n,k}),$ $Y_{n,k}=X_{n,k}-\mu_n,$ $\Delta_n=\frac1{n-2m_n}\sum_{k\in\aleph_{n,3}}Y_{n,k}.$ By \textup{(T4)}, \(\delta_{i,n}\ge c>0\) for \(n\) large. Since \(E_n=\frac1{n-2m_n}\sum_{k\in\aleph_{n,3}}(Y_{n,k}Y_{n,k}^\top-\Sigma_n)-\Delta_n\Delta_n^\top\) and \textup{(T1)} gives \(\sup_n\E\|X_n\|^4<\infty\), the first term is \(O_{\mathbb P}(m_n^{-1/2})\) in operator norm and \(\|\Delta_n\Delta_n^\top\|_{\op}=O_{\mathbb P}(m_n^{-1})\); hence \(\|E_n\|_{\op}=O_{\mathbb P}(m_n^{-1/2})\) and \(\mathbb P(\|E_n\|_{\op}<c/2)\to1\). On \(\{\|E_n\|_{\op}<c/2\}\), perturbation theory for simple eigenvalues \cite[Thm.~V.2.8]{stewart1990matrix} gives \(\widehat u_{i,n}^3-u_{i,n}=\sum_{j\neq i}\frac{u_{j,n}^\top E_nu_{i,n}}{\lambda_{i,n}-\lambda_{j,n}} u_{j,n}+r_{i,n}\), \(\|r_{i,n}\|\le C\|E_n\|_{\op}^2\). Since \(u_{j,n}^\top\Sigma_nu_{i,n}=0\) for \(j\neq i\), \(u_{j,n}^\top E_nu_{i,n}=\frac1{n-2m_n}\sum_{k}(u_{j,n}^\top Y_{n,k})(u_{i,n}^\top Y_{n,k})-(u_{j,n}^\top\Delta_n)(u_{i,n}^\top\Delta_n)\); summing over \(j\neq i\), \(\sum_{j\neq i}\frac{u_{j,n}^\top E_nu_{i,n}}{\lambda_{i,n}-\lambda_{j,n}} u_{j,n}=(\widehat  P_n^{ 3}-P_n)\psi_{i,n}+R_{n,1}\), \(\|R_{n,1}\|=O_{\mathbb P}(m_n^{-1})\), while \(P_n\psi_{i,n}=0\). Thus \(\widehat u_{i,n}^3-u_{i,n}=(\widehat  P_n^{ 3}-P_n)\psi_{i,n}+O_{\mathbb P}(m_n^{-1})\), and since \(m_n\sim n/3\), \(\sqrt n (\widehat u_{i,n}^3-u_{i,n})=\sqrt n (\widehat  P_n^{ 3}-P_n)\psi_{i,n}+o_{\mathbb P}(1)\).
	
	Joint limit. Fix \(T\subseteq\{1,\dots,d\}\) and set \(\Xi_{n,k}=(\psi_{i,n}(X_{n,k}))_{i\in T}\), \(k\in\aleph_{n,3}\); these are i.i.d.\ within the row and centered (\(P_n\psi_{i,n}=0\)). Because \(\delta_{i,n}\ge c\) and \(\sup_n\|\mu_n\|<\infty\) (Lemma~\ref{lem:muSigma}), \(\|\psi_{i,n}(X_{n,k})\|\le C\|X_{n,k}-\mu_n\|^2\), so \textup{(T1)} gives \(\sup_n\E\|\Xi_{n,1}\|^{2+\eta/2}<\infty\); and \(P_n(\psi_{i,n}\psi_{j,n}^\top)\to P(\psi_i\psi_j^\top)\). To justify the latter, split \(|P_n(\psi_{i,n}\psi_{j,n}^\top)-P(\psi_i\psi_j^\top)|\le|P_n(\psi_{i,n}\psi_{j,n}^\top-\psi_i\psi_j^\top)|+|P_n(\psi_i\psi_j^\top)-P(\psi_i\psi_j^\top)|\): the first term \(\to0\) by truncation at radius \(R\): \(P_n|\psi_{i,n}\psi_{j,n}^\top-\psi_i\psi_j^\top|\le P_n[|\psi_{i,n}\psi_{j,n}^\top-\psi_i\psi_j^\top|\mathbf 1_{\{\|x\|\le R\}}]+P_n[|\psi_{i,n}\psi_{j,n}^\top-\psi_i\psi_j^\top|\mathbf 1_{\{\|x\|>R\}}]\); on \(\{\|x\|\le R\}\) the integrand \(\to0\) uniformly (from \(u_{j,n}\to u_j\), \(\lambda_{j,n}\to\lambda_j\), \(\mu_n\to\mu\)), and the tail is \(\le C\,P_n[(1+\|X\|^4)\mathbf 1_{\{\|X\|>R\}}]\), uniformly small for large \(R\) by the \((4+\eta)\)-moment bound in \textup{(T1)}; the second \(\to0\) by \(P_n\xrightarrow{\mathcal L}P\) (Lemma~\ref{lem:muSigma}) applied to the \(O(\|x\|^4)\)-growth map \(\psi_i\psi_j^\top\), again by uniform integrability and truncation. Hence \(\Cov(\Xi_{n,1})\to\Gamma:=(P(\psi_i\psi_j^\top))_{i,j\in T}\). With \(N_{3,n}=n-2m_n\), Lyapunov's condition holds and the multivariate CLT gives \(N_{3,n}^{-1/2}\sum_k\Xi_{n,k}\xrightarrow{\mathcal L}\mathcal N(0,\Gamma)\). Since \(N_{3,n}/n\to1/3\), \((\sqrt n(\widehat P_n^3-P_n)\psi_{i,n})_{i\in T}\xrightarrow{\mathcal L}\mathcal N(0,3\Gamma)\), i.e.\ \((Z_i^3)_{i\in T}\) with \(\Cov(Z_i^3,Z_j^3)=3P(\psi_i\psi_j^\top)\). Combining with the linearization proves the claim; \(T=\{i\}\) recovers the axial marginal.
\end{proof}

\begin{lemma}\label{lem:uniform-entropy-vw}
	There exist \(A<\infty\), \(v<\infty\) such that for all \(\varepsilon\in(0,1)\), \(\sup_{n\ge1}\log N(\varepsilon,\mathcal F,\rho_n)\le v\log(A/\varepsilon)\), and consequently \(\int_0^1\sqrt{\sup_{n\ge1}\log N(\varepsilon,\mathcal F,\rho_n)}\,d\varepsilon<\infty\).
\end{lemma}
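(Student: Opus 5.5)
The plan is to treat $\mathcal F$ as a VC class and apply the uniform entropy bound for VC classes, which controls covering numbers in $L_2(Q)$ uniformly over \emph{all} probability measures $Q$. Since each $\rho_n$ is the $L_2(P_n^c)$ semimetric, the supremum over $n$ is then automatic.

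First, I would check that $\mathcal F=\{\ga_{a,b}:a\in\mathbb S^{d-1},b\in\mathbb R\}$ is VC. Each $\ga_{a,b}$ is the indicator of the closed half-space $\{x:a^\top x\le b\}$. The collection of all closed half-spaces in $\mathbb R^d$ has VC dimension $d+1$. One way to see this is that these sets are the sets $\{x: g(x)\le 0\}$ with $g$ ranging over the $(d+1)$-dimensional vector space of affine functions $x\mapsto a^\top x-b$; by the standard result on positivity sets of a finite-dimensional function space (e.g.\ van der Vaart and Wellner, Lemma~2.6.15), the VC index is at most $d+2$. Restricting to unit normals $a$ only removes sets, so the VC index of $\mathcal F$ is at most $d+2$.

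Second, I would invoke the uniform covering bound for VC classes of indicators (van der Vaart and Wellner, Theorem~2.6.4, or Theorem~2.6.7 with envelope $F\equiv1$). It gives a universal constant $K$ such that
\[
\sup_Q N(\varepsilon,\mathcal F,L_2(Q))\le K\,V(\mathcal F)\,(4e)^{V(\mathcal F)}\varepsilon^{-2(V(\mathcal F)-1)},\qquad 0<\varepsilon<1,
\]
where the supremum is over all probability measures $Q$ on $\mathbb R^d$. In particular it covers $Q=P_n^c$ for every $n$. Taking logarithms yields $\log N\le c_0+2(V-1)\log(1/\varepsilon)$. This can be absorbed into the form $v\log(A/\varepsilon)$ by taking $v=2(V-1)\vee1$ and choosing $A\ge e$ large enough that $v\log A\ge c_0$. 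The bound uses $\log(1/\varepsilon)\ge0$ on $(0,1)$.

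Finally, integrability of the entropy integral follows from $\int_0^1\sqrt{v\log(A/\varepsilon)}\,d\varepsilon\le\sqrt v\int_0^1(\sqrt{\log A}+\sqrt{\log(1/\varepsilon)})\,d\varepsilon$. This is finite because $\int_0^1\sqrt{\log(1/\varepsilon)}\,d\varepsilon=\Gamma(3/2)<\infty$.

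There is no real obstacle here. The only points needing care are confirming the VC bound for half-spaces and noting that the cited covering bound is uniform in $Q$, so it does not depend on any property of $P_n$. Measurability is not an issue: $\mathcal F$ is pointwise measurable via the countable subfamily with rational $a$ (normalized) and rational $b$, in line with the convention adopted in Section~\ref{sec:mainth}.
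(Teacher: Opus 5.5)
Your proposal is correct and follows essentially the same route as the paper: $\mathcal F$ is a VC class of indicators with envelope $1$, so the van der Vaart--Wellner bound (Theorem~2.6.7, or 2.6.4) holds uniformly over all probability measures $Q$. In particular it holds for $Q=P_n^c$ for every $n$, and the entropy integral is then finite. Your extra details are accurate fillings-in of steps the paper leaves implicit: the VC index $d+2$ for half-spaces, the absorption of the constant into $v\log(A/\varepsilon)$, and the $\Gamma(3/2)$ evaluation of the integral.
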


\begin{proof}[Proof of Lemma~\ref{lem:uniform-entropy-vw}]
	\(\mathcal F\) is a VC class of indicators with envelope \(1\); by \cite[Th.\ 2.6.7]{vaart2023empirical} there are universal \(A,v\) with \(N(\varepsilon,\mathcal F,L_2(Q))\le(A/\varepsilon)^v\) for every probability measure \(Q\) and \(\varepsilon\in(0,1)\). Taking \(Q=P_n^c\) and \(\sup_n\) gives the entropy bound, and \(\int_0^1\sqrt{v\log(A/\varepsilon)}\,d\varepsilon<\infty\) gives the integral bound.
\end{proof}

To handle the reflection map for a general mirror subspace, call an \emph{\(|S|\)-frame} any ordered tuple \(W=(w_i)_{i\in S}\) of vectors in \(\mathbb R^d\), said to be orthonormal when its entries are orthonormal vectors. We fix an open neighborhood \(\mathcal U\) of the set of orthonormal \(|S|\)-frames such that \(Q_{S,W}=2\sum_{i\in S}w_iw_i^\top-I_d\) is invertible for every frame \(W\in\mathcal U\).

\begin{lemma}\label{lem:dif-QRu-TA}
	Assume \textup{(T1)}--\textup{(T3)} and \(d_{\mathcal F}(P_n,P)\to0\). For a frame \(W=(w_i)_{i\in S}\in\mathcal U\), let \(Q_{S,W}=2\sum_{i\in S}w_iw_i^\top-I_d\) and \(\Phi_n^{S}(W)[\ga]:= \int_{\mathbb R^d}\ga(x)\, f_n(Q_{S,W}x+\mu_n)\, dx\). Then \(\Phi_n^{S}\) is Fr\'echet differentiable at every orthonormal frame \(U=(u_i)_{i\in S}\), with \(D\Phi_n^{S}(U)[V](\ga)=\sum_{i\in S}G_{n,U,i}(\ga)^\top v_i\), \(V=(v_i)_{i\in S}\),
	\[
	G_{n,U,i}(\ga)=2\int_{\mathbb R^d}\ga(x)\bigl((u_i^\top x)I_d+xu_i^\top\bigr)\nabla f_n(Q_{S,U}x+\mu_n)\, dx,\qquad i\in S.
	\]
	Moreover, there exist \(r_0>0\) and \(C_n=o(\sqrt n)\), independent of \(U\), such that whenever \(\|V\|\le r_0\) and \(U+V\in\mathcal U\), \(\bigl\|\Phi_n^{S}(U+V)-\Phi_n^{S}(U)-D\Phi_n^{S}(U)[V]\bigr\|_{\mathcal F}\le C_n\|V\|^2\) and \(\sup_{n\ge1}\sup_{U}\sup_{i\in S}\|G_{n,U,i}\|_\mathcal{F}<\infty\). If \(U_n\to U\) (orthonormal frames), then \(\|G_{n,U_n,i}-G_{u_i}^{S}\|_\mathcal{F}\to0\) for every \(i\in S\), with \(G_{u_i}^{S}\) as in \eqref{eq:G-general}. Finally, \(\Phi_n^{S}(U)(\ga)=P_n^c(Q_{S,U}\ga)\) for every orthonormal \(U\) and \(\ga\in\mathcal F\).
\end{lemma}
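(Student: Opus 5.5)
The plan is to treat \(\Phi_n^{S}(W)[\ga]=\int\ga(x)f_n(Q_{S,W}x+\mu_n)\,dx\) as a composition of the smooth matrix-valued map \(W\mapsto Q_{S,W}\) with the integral functional \(Q\mapsto\int\ga(x)f_n(Qx+\mu_n)\,dx\). I would then Taylor-expand \(f_n\) to second order along the segment \(Q_t=Q_{S,U+tV}\), \(t\in[0,1\)].

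\textbf{First-order term.} Since \(Q_{S,U+V}-Q_{S,U}=2\sum_{i\in S}(v_iu_i^\top+u_iv_i^\top)+2\sum_{i\in S}v_iv_i^\top\), the linear part in \(V\) is \(\delta Q=2\sum_i(v_iu_i^\top+u_iv_i^\top)\). The chain rule gives the integrand \(\nabla f_n(Q_{S,U}x+\mu_n)^\top\delta Q\,x\). Now
\[
\nabla f_n^\top(v_iu_i^\top+u_iv_i^\top)x=(u_i^\top x)\,\nabla f_n^\top v_i+(v_i^\top x)\,u_i^\top\nabla f_n .
\]
Written as \(v_i^\top\bigl((u_i^\top x)I_d+xu_i^\top\bigr)\nabla f_n\), this matches \(G_{n,U,i}(\ga)^\top v_i\) with the stated formula. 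The integrability needed is \(\int(1+\|x\|)\|\nabla f_n(Qx+\mu_n)\|\,dx\). After the change of variables \(y=Qx+\mu_n\) (Jacobian 1, \(Q\) orthogonal) and using \(\sup_n\|\mu_n\|<\infty\) from Lemma~\ref{lem:muSigma}, this is bounded by \(C\int(1+\|y\|)\|\nabla f_n(y)\|\,dy\). That quantity is bounded uniformly in \(n\) by (T3), since \(\int(1+\|y\|)\|\nabla f\|<\infty\) and \(\int(1+\|y\|)\|\nabla f_n-\nabla f\|\to0\). Because \(|\ga|\le1\), this gives \(\sup_{n,U,i}\|G_{n,U,i}\|_{\mathcal F}<\infty\).

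\textbf{Remainder.} This is the main obstacle. Two issues arise: the frame \(U+tV\) is not orthonormal, so \(Q_t\) is only near-orthogonal, and the Hessian bound in (T3) is an integral bound, not a sup bound. I would write the remainder as \(\int_0^1(1-t)\,\frac{d^2}{dt^2}\int\ga(x)f_n(Q_tx+\mu_n)\,dx\,dt\) plus the term \(\int\ga(x)\nabla f_n(Q_{S,U}x+\mu_n)^\top\bigl(2\sum_iv_iv_i^\top\bigr)x\,dx\). The latter is \(O(\|V\|^2)\) by the gradient bound above. The second derivative contains \(x^\top\dot Q_t^\top Hf_n(Q_tx+\mu_n)\dot Q_tx\) with \(\|\dot Q_t\|\le C\|V\|\), together with a gradient term involving \(\ddot Q_t=4\sum_iv_iv_i^\top\). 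For \(\|V\|\le r_0\) small, \(Q_t\) is invertible with \(\|Q_t^{-1}\|\le2\) and \(|\det Q_t|^{-1}\le2\). The change of variables \(y=Q_tx+\mu_n\) then bounds the Hessian term by \(C\|V\|^2\int(1+\|y\|^2)\|Hf_n(y)\|_{\op}\,dy\le C\|V\|^2A_n\). The gradient term is bounded by \(C\|V\|^2\sup_n\int(1+\|y\|)\|\nabla f_n\|\). Hence \(C_n=C(1+A_n)=o(\sqrt n)\), independent of \(U\) and uniform over \(\ga\). Fréchet differentiability follows from this quadratic bound.

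\textbf{Convergence \(\|G_{n,U_n,i}-G^{S}_{u_i}\|_{\mathcal F}\to0\).} I would split the difference into two pieces. The first replaces \(\nabla f_n\) by \(\nabla f\) at fixed \(U_n\). After the change of variables it is bounded by \(C\int(1+\|y\|)\|\nabla f_n-\nabla f\|(y+\mu_n-\mu)\,dy\), which tends to 0 by (T3). The second piece is \(\int(1+\|x\|)\|\nabla f(Q_{S,U_n}x+\mu_n)-\nabla f(Q_Sx+\mu)\|\,dx\), together with the coefficient change \(u_{i,n}\to u_i\). This tends to 0 by continuity of translations and orthogonal maps in weighted \(L^1\): approximate \(\nabla f\) in \(L^1((1+\|y\|)dy)\) by a compactly supported continuous function and use dominated convergence. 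I would also state the uniformity in \(\ga\) explicitly, which holds since \(|\ga|\le1\).

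\textbf{Identity \(\Phi_n^S(U)(\ga)=P_n^c(Q_{S,U}\ga)\).} For orthonormal \(U\), \(Q=Q_{S,U}\) is orthogonal and involutive. Substituting \(y=Qx+\mu_n\) gives \(\int\ga(Q(y-\mu_n))f_n(y)\,dy=\mathbb E[\ga(Q(X_{n,1}-\mu_n))]=P_n^c(Q\ga)\).
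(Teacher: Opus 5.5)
Your proposal is correct and follows essentially the paper's route. You Taylor-expand $f_n$ to second order in the increment of the reflection, use an affine change of variables with uniformly invertible, near-orthogonal linear part to reduce the gradient and Hessian terms to the \textup{(T3)} integrals (giving $C_n=C(1+A_n)$), and approximate $(1+\|y\|)\nabla f$ by a compactly supported function to get $G_{n,U_n,i}\to G^{S}_{u_i}$.

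There is one bookkeeping slip, which does not affect the bound. With your parametrization $Q_t=Q_{S,U+tV}$, the first $t$-derivative at $t=0$ is already exactly $D\Phi_n^{S}(U)[V]$. So the extra term $\int\ga(x)\nabla f_n(Q_{S,U}x+\mu_n)^\top(2\sum_{i\in S}v_iv_i^\top)x\,dx$ should not be added on top of the second-order integral remainder. That term belongs instead to the linear interpolation $Q_{S,U}+t(Q_{S,U+V}-Q_{S,U})$ implicit in the paper, for which $\ddot Q_t=0$. Also, apply the Taylor formula pointwise in $x$ and then use Fubini, rather than differentiating under the integral sign.
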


\begin{proof}[Proof of Lemma~\ref{lem:dif-QRu-TA}]
	Here \(\|V\|^2=\sum_{i\in S}\|v_i\|^2\). Since \(\QS_{S,W}\) is invertible on \(\mathcal U\), \(\|\Phi_n^{S}(W)\|_\mathcal{F}\le\int f_n(\QS_{S,W}x+\mu_n)\,dx=|\det(\QS_{S,W})|^{-1}<\infty\); and if \(U\) is orthonormal, \(\QS_{S,U}^{-1}=\QS_{S,U}\), so \(\Phi_n^{S}(U)(\ga)=P_n^c(\QS_{S,U}\ga)\). Fix an orthonormal \(U=(u_i)_{i\in S}\). The reflection increment is
	\[
	\Delta_{U,V}(x)=\QS_{S,U+V}x-\QS_{S,U}x=2\sum_{i\in S}\bigl((u_i^\top x)I_d+u_ix^\top\bigr)v_i+2\sum_{i\in S}(v_i^\top x)v_i,
	\]
	linear plus quadratic in \(V\); the candidate differential is \(D\Phi_n^{S}(U)[V](\ga)=\sum_{i\in S}G_{n,U,i}(\ga)^\top v_i\) (the transpose turns \(u_ix^\top\) into \(xu_i^\top\)). Since \(\|(u_i^\top x)I_d+xu_i^\top\|_{\op}\le2\|x\|\), the change of variables \(y=\QS_{S,U}x+\mu_n\) and \textup{(T3)} give \(\|G_{n,U,i}\|_\mathcal{F}\le C\int(1+\|y\|)\|\nabla f_n(y)\|\,dy<\infty\) uniformly, so \(\sup_{n,U,i}\|G_{n,U,i}\|_\mathcal{F}<\infty\). By Taylor's theorem,
	\begin{multline*}
	f_n(\QS_{S,U+V}x+\mu_n)=f_n(\QS_{S,U}x+\mu_n)+\nabla f_n(\QS_{S,U}x+\mu_n)^\top\Delta_{U,V}(x)\\
	+\tfrac12\Delta_{U,V}(x)^\top\!\Big(\!\int_0^1\!Hf_n(\cdots)dt\Big)\Delta_{U,V}(x).
	\end{multline*}
	The linear part of \(\Delta_{U,V}\) reproduces \(\sum_{i\in S}G_{n,U,i}(\ga)^\top v_i\) plus a term quadratic in \(V\) bounded by \(C\|V\|^2\int(1+\|y\|)\|\nabla f_n\|\le C\|V\|^2\). For the Hessian term, \(\|\Delta_{U,V}(x)\|\le6\sqrt{|S|}\|x\|\|V\|\), so after \(y=\QS_{S,U}x+\mu_n\), \(\|\Delta_{U,V}(x)\|^2\le C(1+\|y\|^2)\|V\|^2\); since \(\Delta_{U,V}\) is linear in \(x\), the substitution \(z=y+t\Delta_{U,V}(\QS_{S,U}^\top(y-\mu_n))\) has affine map with linear part \(I_d+2t(\sum_{i\in S}(v_iu_i^\top+u_iv_i^\top+v_iv_i^\top))\QS_{S,U}^\top\), invertible for \(\|V\|\) small with uniformly bounded inverse operator norm and determinant; hence \(\int(1+\|y\|^2)\|Hf_n(\cdots)\|_{\op}dy\le C\int(1+\|z\|^2)\|Hf_n(z)\|_{\op}dz\le CA_n\) by \textup{(T3)}. Thus \(\|\Phi_n^{S}(U+V)-\Phi_n^{S}(U)-D\Phi_n^{S}(U)[V]\|_\mathcal{F}\le C_n\|V\|^2\) with \(C_n=C_0+C_1A_n=o(\sqrt n)\), so \(\Phi_n^{S}\) is Fr\'echet differentiable at \(U\).
	
	For the last claim, let \(U_n\to U\) be orthonormal frames and fix \(i\). Splitting \(G_{n,U_n,i}-G_{u_i}^{S}\) into a density difference (bounded by \(C\int(1+\|y\|)\|\nabla f_n-\nabla f\|+C\int(1+\|y\|)\|\nabla f(\cdot+\mu_n-\mu)-\nabla f\|\to0\) by \textup{(T3)} and \(L_1\)-continuity of translations) and a frame difference (bounded by \(C\|u_{i,n}-u_i\|\int(1+\|y\|)\|\nabla f\|\) plus an integral that vanishes as \(\QS_{S,U_n}\QS_{S,U}\to I_d\), by \(L_1\)-approximation of \((1+\|y\|)\nabla f\) by a compactly supported continuous map), we get \(\|G_{n,U_n,i}-G_{u_i}^{S}\|_\mathcal{F}\to0\).
\end{proof}

\begin{lemma}\label{lem:lemaux-TA-global}
	Assume \(d_\mathcal{F}(P_n,P)\to 0\), \textup{(T1)} and \textup{(T2)}. Then \(\sqrt n(\widehat  P_n^{c,1}-P_n^c,\widehat  P_n^{c,2}-P_n^c)\xrightarrow{\mathcal L}(\mathbb L^{(1)},\mathbb L^{(2)})\) in \(\ell^\infty(\mathcal F)\), where \(\mathbb L^{(1)},\mathbb L^{(2)}\) are independent centered Gaussian processes with
	\[
	\mathbb L^{(r)}(\ga_{a,b})=\sqrt 3\big(\mathbb B^{r,c}(\ga_{a,b})+f_a(b) a^\top Z^{r}\big),\qquad (a,b)\in\mathbb S^{d-1}\times\mathbb R,
	\]
	with \(\mathbb B^{r,c}\) a \(P^c\)-Brownian bridge, \(Z^{r}\sim\mathcal N(0,\Sigma)\), jointly Gaussian with \(\mathrm{Cov}(\mathbb B^{r,c}(\ga_{a,b}), c^\top Z^{r})=P^c[(\ga_{a,b}-P^c\ga_{a,b}) c^\top Y]\), \(Y\sim P^c\).
\end{lemma}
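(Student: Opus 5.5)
We write the centered split empirical process over each block as an uncentered process plus a mean-shift correction. For $r=1,2$ and $\ga=\ga_{a,b}$ we have $\ga_{a,b}(x-\bar X_n^r)=\ga_{a,b+a^\top(\bar X_n^r-\mu_n)}(x-\mu_n)$. Writing $\widehat P_n^{r}$ for the uncentered empirical measure of the $Y_{n,k}=X_{n,k}-\mu_n$ over $\aleph_{n,r}$ and $\Delta_n^r=\bar X_n^r-\mu_n$, this gives
\[
\sqrt n\bigl(\widehat P_n^{c,r}-P_n^c\bigr)(\ga_{a,b})
=\sqrt n\bigl(\widehat P_n^{r}-P_n^c\bigr)(\ga_{a,b+a^\top\Delta_n^r})
+\sqrt n\bigl(F_{n,a}(b+a^\top\Delta_n^r)-F_{n,a}(b)\bigr),
\]
where $F_{n,a}$ is the distribution function of $a^\top Y_{n,1}$. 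The plan has three steps. First, prove joint weak convergence of the uncentered processes together with the means. Second, show that the shift inside the first term is asymptotically negligible by asymptotic equicontinuity. Third, linearize the second term using (T2).

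\emph{Step 1 (uncentered processes).} We apply a triangular-array Donsker theorem for the classes $\mathcal F$ under $P_n^c$, for instance \cite[Th.~2.8.9 / 2.11.22]{vaart2023empirical} (uniform entropy version), enlarging the class by the coordinate maps $y\mapsto c^\top y$. The uniform entropy integral is finite by Lemma~\ref{lem:uniform-entropy-vw} together with the VC property of linear functions. The envelope condition for the linear part follows from (T1). The Lindeberg-type condition also follows from (T1), since $\sup_n\mathbb E\|Y_{n,1}\|^{4+\eta}<\infty$. For convergence of the covariance kernels, $P_n^c(\ga_{a,b}\ga_{a',b'})\to P^c(\ga_{a,b}\ga_{a',b'})$, we argue as follows. $P_n^c\to P^c$ weakly by Lemma~\ref{lem:muSigma}. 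Intersections of two half-spaces have $P^c$-null boundaries, because $P$ has bounded projected densities by (T2). Convergence of the mixed moments $P_n^c(\ga\,c^\top y)$ then follows by uniform integrability. Since $\aleph_{n,1}$ and $\aleph_{n,2}$ are disjoint blocks of i.i.d.\ variables, the two limits are independent. Because $|\aleph_{n,r}|/n\to1/3$, we obtain
\[
\sqrt n\bigl(\widehat P_n^r-P_n^c,\ \Delta_n^r\bigr)\to\sqrt3\,(\mathbb B^{r,c},Z^r)
\]
jointly, with the stated cross-covariance.

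\emph{Step 2 (equicontinuity for the shifted term).} We show
\[
\sup_{a,b}\rho_n\bigl(\ga_{a,b+a^\top\Delta_n^r},\ga_{a,b}\bigr)^2
\le\sup_{a,b}\bigl|F_{n,a}(b+a^\top\Delta_n^r)-F_{n,a}(b)\bigr|
\le K_0\|\Delta_n^r\|=O_{\mathbb P}(n^{-1/2}).
\]
Asymptotic uniform $\rho_n$-equicontinuity of the Donsker processes, which is uniform in $n$ thanks to Lemma~\ref{lem:uniform-entropy-vw}, then makes the difference between the shifted and unshifted uncentered processes $o_{\mathbb P}(1)$ uniformly over $\mathcal F$. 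One point needs care here: $\rho_n$ changes with $n$. We handle this by using the version of the theorem stated in terms of $\rho_n$, with $\sup_n$ entropy control.

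\emph{Step 3 (drift term), which we expect to be the main obstacle.} A mean-value expansion gives
\[
\sqrt n\bigl(F_{n,a}(b+a^\top\Delta)-F_{n,a}(b)\bigr)=f_{n,a}(b)\,a^\top\sqrt n\Delta+R_n,
\qquad
|R_n|\le\tfrac12K_{1,n}\sqrt n\|\Delta\|^2=K_{1,n}\,O_{\mathbb P}(n^{-1/2})=o_{\mathbb P}(1),
\]
where the last equality uses $K_{1,n}=o(\sqrt n)$ from (T2). Next we replace $f_{n,a}(b)$ by $f_a(b)$, using $\sup_a\|f_{n,a}-f_a\|_\infty\to0$ and $\sqrt n\Delta=O_{\mathbb P}(1)$. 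The resulting limit map
\[
(z,Z)\mapsto z+f_\cdot(\cdot)\,(\cdot)^\top Z
\]
is continuous from $\ell^\infty(\mathcal F)\times\mathbb R^d$ to $\ell^\infty(\mathcal F)$, because $\sup_a\|f_a\|_\infty<\infty$. The continuous mapping theorem combined with Slutsky's lemma then yields the stated limit $\mathbb L^{(r)}=\sqrt3(\mathbb B^{r,c}+f_a(b)\,a^\top Z^r)$, with the two processes independent. The delicate part is making the second-order remainder uniform over $(a,b)$ while only having the growing bound $K_{1,n}$; this is exactly why the rate $o(\sqrt n)$ is imposed in (T2).
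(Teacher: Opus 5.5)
Your proposal is correct and follows essentially the paper's own argument. It uses the same three-term decomposition: the shifted uncentered process, whose increment you control by $\rho_n$-equicontinuity with the strip bound $\rho_n^2\le K_0\|\Delta_n^r\|$, and the drift, which you linearize via \textup{(T2)} with remainder $\tfrac12K_{1,n}\sqrt n\|\Delta_n^r\|^2=o_{\mathbb P}(1)$; you then close with continuous mapping, Slutsky and block independence. The only cosmetic difference is how you obtain joint convergence of the process and the block mean: you enlarge the Donsker class by linear maps, whereas the paper combines marginal tightness with a Cram\'er--Wold/multivariate CLT argument for the finite-dimensional laws.
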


\begin{proof}[Proof of Lemma~\ref{lem:lemaux-TA-global}]
	Fix \(r\in\{1,2\}\), \(m_n=|\aleph_{n,r}|\sim n/3\). Set \(\mathbb P_{n,r}^{\mu_n}(\ga)=\frac1{m_n}\sum_{j\in\aleph_{n,r}}\ga(X_{n,j}-\mu_n)\), \(\alpha_{n,r}=\sqrt{m_n}(\mathbb P_{n,r}^{\mu_n}-P_n^c)\), \(\Delta_{n,r}=\bar X_n^{r}-\mu_n\), \(\beta_{n,r}=\sqrt{m_n}\Delta_{n,r}\), \(\mathbb G_n^{r}=\sqrt{n/m_n}\,\alpha_{n,r}\). Since \(\widehat  P_n^{c,r}(\ga_{a,b})=\mathbb P_{n,r}^{\mu_n}(\ga_{a,b+a^\top\Delta_{n,r}})\),
	\begin{equation}\label{eq:decomp-global-fixed-corr}
		\sqrt n (\widehat  P_n^{c,r}-P_n^c)(\ga_{a,b})=\underbrace{\mathbb G_n^{r}(\ga_{a,b+a^\top\Delta_{n,r}})-\mathbb G_n^{r}(\ga_{a,b})}_{I_{n,1}}+\underbrace{\mathbb G_n^{r}(\ga_{a,b})}_{I_{n,2}}+\underbrace{\sqrt n(P_n^c\ga_{a,b+a^\top\Delta_{n,r}}-P_n^c\ga_{a,b})}_{I_{n,3}}.
	\end{equation}
	By Lemma~\ref{lem:uniform-entropy-vw} and \cite[Section~2.8.3, Theorem~2.8.9]{vaart2023empirical}, \(\alpha_{n,r}\) is asymptotically tight and \(\rho_n\)-equicontinuous in probability. Since \((\ga_{a,b+a^\top\delta}-\ga_{a,b})^2\le\mathbf 1\{|a^\top x-b|\le\|\delta\|\}\), \textup{(T2)} gives \(\sup_{a,b}\rho_n^2(\ga_{a,b+a^\top\Delta_{n,r}},\ga_{a,b})\le2K_0\|\Delta_{n,r}\|\) a.s.; as \(\Delta_{n,r}=O_{\mathbb P}(n^{-1/2})\) (Lemma~\ref{lem:muSigma}), \(\|I_{n,1}\|_{\mathcal F}=o_{\mathbb P}(1)\) by equicontinuity. By Lemma~\ref{lem:muSigma}, \(P_n^c\xrightarrow{\mathcal L}P^c\); indicators and their products have \(P^c\)-null boundaries by \textup{(T2)}, so \(\Cov(\alpha_{n,r}(\ga),\alpha_{n,r}(\ga'))\to P^c(\ga\ga')-P^c\ga P^c\ga'\), and \(I_{n,2}\xrightarrow{\mathcal L}\sqrt3\,\mathbb B^{r,c}\). For \(I_{n,3}\), Taylor and \textup{(T2)} give \(I_{n,3}(a,b)=f_{n,a}(b)a^\top\sqrt n\Delta_{n,r}+r_{n,r}\) with \(\sup_{a,b}|r_{n,r}|\le\frac12K_{1,n}\sqrt n\|\Delta_{n,r}\|^2=o_{\mathbb P}(1)\); with \(J_{n,r}(\ga_{a,b})=f_a(b)a^\top\sqrt n\Delta_{n,r}\) and \(\sup_{a,b}|f_{n,a}-f_a|\to0\), \(\|I_{n,3}-J_{n,r}\|_{\mathcal F}=o_{\mathbb P}(1)\).
	
	Joint convergence \((\alpha_{n,r},\beta_{n,r})\xrightarrow{\mathcal L}(\mathbb B^{r,c},Z^{r})\), \(Z^{r}\sim N(0,\Sigma)\), follows from asymptotic tightness of \(\alpha_{n,r}\), tightness of \(\beta_{n,r}\) (\(\mathbb E\|\beta_{n,r}\|^2=\mathrm{tr}(\Sigma_n)\)), and Cram\'er--Wold: for \(\ga_1,\dots,\ga_m\) and \(c\), \((\alpha_{n,r}(\ga_\ell),c^\top\beta_{n,r})=m_n^{-1/2}\sum_j\xi_{n,j}\) with bounded first coordinates and \(\sup_n\E\|Y_{n,1}\|^{2+\eta}<\infty\), so the multivariate CLT applies; the mixed covariance is \(P^c[(\ga_\ell-P^c\ga_\ell)c^\top Y]\), since \(P_n^c(\ga_\ell c^\top\cdot)\to P^c(\ga_\ell c^\top\cdot)\): the map \(y\mapsto\ga_\ell(y)c^\top y\) has linear growth and is discontinuous only on the half-space boundary \(\{a_\ell^\top y=b_\ell\}\), which is \(P^c\)-null (as \(a_\ell^\top Y\) has a density by \textup{(T2)}), so the convergence follows from \(P_n^c\xrightarrow{\mathcal L}P^c\) by truncating \(\ga_\ell(y)c^\top y\) at radius \(R\) and using the uniform integrability of \(\{\|c^\top Y_n\|\}\) supplied by \textup{(T1)}. Multiplying by \(\sqrt{n/m_n}\to\sqrt3\) and using continuity of \(T(z)(\ga_{a,b})=f_a(b)a^\top z\), the continuous mapping theorem, \(\|I_{n,3}-J_{n,r}\|_{\mathcal F}=o_{\mathbb P}(1)\), \(\|I_{n,1}\|_{\mathcal F}=o_{\mathbb P}(1)\) and Slutsky give \(\sqrt n(\widehat P_n^{c,r}-P_n^c)\xrightarrow{\mathcal L}\mathbb L^{(r)}\) with \(\mathbb L^{(r)}(\ga_{a,b})=\sqrt3(\mathbb B^{r,c}(\ga_{a,b})+f_a(b)a^\top Z^{r})\). Finally, independence of \(\aleph_{n,1},\aleph_{n,2}\) factorizes the joint characteristic function, so \(\mathbb L^{(1)},\mathbb L^{(2)}\) are independent.
\end{proof}

\begin{lemma}\label{lem:spectral-id}
	Let \(\Sigma\) be symmetric with simple spectrum \(\lambda_1>\cdots>\lambda_d\) and orthonormal eigenvectors \(u_1,\dots,u_d\), let \(S\subseteq\{1,\dots,d\}\), and \(V_S=\operatorname{span}\{u_i:i\in S\}\). Let \((\Sigma_n)\) be symmetric with \(\|\Sigma_n-\Sigma\|_{\op}\to0\), and for each \(n\) let \(W_n\subseteq\mathbb R^d\) be \(\Sigma_n\)-invariant with \(\dim W_n=|S|\) and \(\|\PS_{W_n}-\PS_{V_S}\|_{\op}\to0\). Then, for \(n\) large, \(W_n\) is the sum of the eigenspaces of \(\Sigma_n\) whose eigenvalues occupy the ranks in \(S\) within the decreasing ordering of the spectrum of \(\Sigma_n\); consequently \(2\PS_{W_n}-I_d=\QS_{S,n}\).
\end{lemma}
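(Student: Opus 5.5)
The plan is to use eigenvalue continuity to separate the spectrum of \(\Sigma_n\) into well-isolated clusters, and then exploit the fact that an invariant subspace is a direct sum of eigenspaces.

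\emph{Step 1 (simple spectrum for \(\Sigma_n\)).} Let \(c=\tfrac13\min_{i\ne j}|\lambda_i-\lambda_j|>0\). By Weyl's inequality, the ordered eigenvalues satisfy \(|\lambda_{i,n}-\lambda_i|\le\|\Sigma_n-\Sigma\|_{\op}\). Hence, for \(n\) large, \(|\lambda_{i,n}-\lambda_i|<c\) for every \(i\). In particular \(\Sigma_n\) has simple spectrum, and the \(i\)-th ranked eigenvalue \(\lambda_{i,n}\) is the unique eigenvalue lying in the interval \((\lambda_i-c,\lambda_i+c)\). Let \(u_{i,n}\) denote the corresponding unit eigenvector. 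By the Davis--Kahan \(\sin\Theta\) theorem (or a resolvent/Riesz projection argument, using the gap \(\ge c\)), \(\|u_{i,n}u_{i,n}^\top-u_iu_i^\top\|_{\op}\le C\|\Sigma_n-\Sigma\|_{\op}/c\to0\).

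\emph{Step 2 (structure of \(W_n\)).} A \(\Sigma_n\)-invariant subspace of a symmetric matrix with simple spectrum is spanned by eigenvectors: the restriction \(\Sigma_n|_{W_n}\) is symmetric on \(W_n\), so it has an orthonormal eigenbasis inside \(W_n\), and each of these eigenvectors spans one of the one-dimensional eigenspaces \(\operatorname{span}(u_{j,n})\). Therefore \(W_n=\operatorname{span}\{u_{j,n}:j\in T_n\}\) for some index set \(T_n\) with \(|T_n|=|S|\), and \(\PS_{W_n}=\sum_{j\in T_n}u_{j,n}u_{j,n}^\top\).

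\emph{Step 3 (identifying \(T_n=S\)).} This is the only genuinely combinatorial step. Since there are finitely many subsets \(T\), I will argue by contradiction along subsequences. Suppose \(T_n=T\ne S\) for infinitely many \(n\). By Step 1, along those \(n\),
\[
\PS_{W_n}=\sum_{j\in T}u_{j,n}u_{j,n}^\top\;\to\;\sum_{j\in T}u_ju_j^\top=\PS_{V_T}.
\]
But \(\PS_{W_n}\to\PS_{V_S}\) by hypothesis, so \(\PS_{V_T}=\PS_{V_S}\). Since \(\{u_j\}\) is a basis, this forces \(T=S\), a contradiction. Equivalently and quantitatively, \(\|\PS_{V_T}-\PS_{V_S}\|_{\op}=1\) whenever \(T\ne S\) (pick \(j\) in the symmetric difference and apply both projections to \(u_j\)), so a uniform separation of \(1\) rules out all \(T\ne S\) once \(n\) is large enough that both approximation errors are below \(1/2\).

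\emph{Step 4 (conclusion).} By Step 3, \(T_n=S\) for all large \(n\). Hence \(W_n\) is the sum of the eigenspaces of \(\Sigma_n\) whose ranks lie in \(S\), and
\[
2\PS_{W_n}-I_d=2\sum_{i\in S}u_{i,n}u_{i,n}^\top-I_d=\QS_{S,n}.
\]
The only step requiring care is Step 1's eigenprojection continuity; it is standard perturbation theory, and I expect no real obstacle.
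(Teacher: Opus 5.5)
Your proposal is correct and follows essentially the same route as the paper's proof. Both use Weyl's inequality to get a simple spectrum for \(\Sigma_n\), write the invariant subspace \(W_n\) as a direct sum of eigenlines indexed by some \(T_n\) with \(|T_n|=|S|\), and use Davis--Kahan to get convergence of the eigenprojections. Both then exclude \(T_n\neq S\) because \(\PS_{W_n}\) would be separated from \(\PS_{V_S}\) in operator norm. Your subsequence argument with a fixed \(T\neq S\) and the exact separation \(\|\PS_{V_T}-\PS_{V_S}\|_{\op}=1\) is only a cosmetic variant of the paper's choice of \(k\in T_n\setminus S\).
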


\begin{proof}[Proof of Lemma~\ref{lem:spectral-id}]
	By Weyl's inequality \(|\lambda_i(\Sigma_n)-\lambda_i|\le\|\Sigma_n-\Sigma\|_{\op}\to0\), so for \(n\) large the spectrum of \(\Sigma_n\) is simple with gap \(\ge g:=\tfrac12\min_{i\neq j}|\lambda_i-\lambda_j|>0\). Fix such \(n\) and let \(u_{1,n},\dots,u_{d,n}\) be its ordered unit eigenvectors. As \(\Sigma_n\) is symmetric and \(W_n\) is \(\Sigma_n\)-invariant, \(W_n=\bigoplus_{i\in T_n}\operatorname{span}(u_{i,n})\) for a unique \(T_n\) with \(|T_n|=|S|\), and \(\PS_{W_n}=\sum_{i\in T_n}u_{i,n}u_{i,n}^\top\). Davis--Kahan gives \(\|u_{i,n}u_{i,n}^\top-u_iu_i^\top\|_{\op}\le\|\Sigma_n-\Sigma\|_{\op}/g\to0\). If \(T_n\neq S\) along a subsequence, pick \(k\in T_n\setminus S\); then \(u_{k,n}\in W_n\), \(u_{k,n}\to u_k\), so \(\PS_{W_n}u_k\to u_k\) while \(\PS_{V_S}u_k=0\), giving \(\|\PS_{W_n}-\PS_{V_S}\|_{\op}\ge\tfrac12\), a contradiction. Hence \(T_n=S\) for \(n\) large and \(2\PS_{W_n}-I_d=\QS_{S,n}\).
\end{proof}

\begin{lemma}\label{lem:continuity-J}
	Assume the fixed-law versions of \textup{(T2)} and \textup{(T4)} (the limit uses the fixed law \(P\)) and the integrability \(\int_{\mathbb R^d}(1+\|x\|)\|\nabla f(x)\|\,dx<\infty\) (so that the coefficients \(G_{u_i}^{S}\) in \eqref{eq:G-general} are well defined and \(\mathbb L_h^{S}\) exists as in Theorem~\ref{teo:mainth1-TA}). Then \(J(x):=\mathbb P(\|\mathbb L_h^{S}\|_{\mathcal F_h}\le x)\) is continuous on \(\mathbb R\).
\end{lemma}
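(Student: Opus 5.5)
The plan is to invoke the classical result on suprema of Gaussian processes: if $\{\mathbb X(t):t\in T\}$ is a centered Gaussian process with almost surely bounded sample paths, then the law of $\sup_t|\mathbb X(t)|$ is continuous on $(0,\infty)$. This is Tsirelson's theorem in the form of \cite[Theorem~2.3 / Prop.~A.2.x]{vaart2023empirical}, or equivalently Lifshits' result. At most it can have an atom at the left endpoint of its support. So the work splits into three steps: (i) show $\mathbb L_h^{S}$ is a separable centered Gaussian process with bounded sample paths; (ii) deduce continuity of $J$ on $\mathbb R\setminus\{0\}$ (trivially $J=0$ on $(-\infty,0)$); (iii) rule out an atom at $0$, i.e.\ show $\mathbb P(\|\mathbb L_h^{S}\|_{\mathcal F_h}=0)=0$.

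For (i), I would use the representation \eqref{eq:limit-representation}. $\mathbb L^{(1)},\mathbb L^{(2)}$ are tight Gaussian in $\ell^\infty(\mathcal F)$ by Lemma~\ref{lem:lemaux-TA-global}, via Brownian bridges over the VC class $\mathcal F$ plus the bounded drift $f_a(b)a^\top Z^r$, which needs $\sup_a\|f_a\|_\infty<\infty$ from (T2). The linearization term $\sum_{i\in S}G_{u_i}^{S}(\ga)^\top Z_i^3$ is a finite-dimensional Gaussian vector multiplied by the deterministic maps $G_{u_i}^S$. These maps are bounded on $\mathcal F$ because $\|(u_i^\top x)I_d+xu_i^\top\|_{\op}\le 2\|x\|$ and $\int(1+\|x\|)\|\nabla f\|<\infty$, exactly as in Lemma~\ref{lem:dif-QRu-TA}. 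Independence gives joint Gaussianity, so $\mathbb L_h^S$ is a centered Gaussian element of $\ell^\infty(\mathcal F_h)$. Separability follows by restricting to rational $b$, using continuity of $b\mapsto\mathbb L_h^S(\ga_{h,b})$ in $L_2$, which comes from the bounded densities. The Tsirelson/Lifshits theorem then gives that the law of $\|\mathbb L_h^S\|_{\mathcal F_h}$ is absolutely continuous on $(r_0,\infty)$, with $r_0$ the left endpoint of its support. That endpoint is $0$ for a centered Gaussian norm, since $0$ lies in the support of a centered Gaussian measure. So continuity holds on $(0,\infty)$.

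Step (iii) is the main obstacle. $\|\mathbb L_h^S\|_{\mathcal F_h}=0$ means $\mathbb L_h^S(\ga_{h,b})=0$ for all $b$. It suffices to exhibit a single $b_0$ with $\var(\mathbb L_h^S(\ga_{h,b_0}))>0$, because then $\mathbb P(\|\mathbb L_h^S\|\le\varepsilon)\le\mathbb P(|N(0,\sigma^2)|\le\varepsilon)\to0$. By \eqref{eq:cov-kernel}, the variance is the sum of a nonnegative term coming from the independent $\mathbb L^{(1)}$, the same from $\mathbb L^{(2)}$, and the nonnegative PCA term. It therefore suffices that $\var(\mathbb B^{1,c}(\ga)+f_h(b)h^\top Z^1)>0$ for some $b$. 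This is the variance of the $L_2(P^c)$ function $y\mapsto \ind\{h^\top y\le b\}+f_h(b)h^\top y$, up to centering. It vanishes only if that function is a.s.\ constant. Since $h^\top Y$ has a density (T2), it is nondegenerate, and the sum of a step function and a linear function of $h^\top Y$ cannot be a.s.\ constant. A linear function is a.s.\ constant only if $f_h(b)=0$, and then the indicator would have to be constant, forcing $F_h(b)\in\{0,1\}$. Choosing $b_0$ with $0<F_h(b_0)<1$ then gives a positive variance. Combining (ii) and (iii) yields continuity of $J$ on all of $\mathbb R$.
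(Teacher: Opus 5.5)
Your proposal is correct and follows essentially the same route as the paper. Both arguments combine a classical atomlessness result for suprema of Gaussian processes with the identical non-degeneracy step: pick \(b_0\) with \(0<F_h(b_0)<1\) and show \(\var(\ind\{W\le b_0\}+f_h(b_0)W)>0\). The paper cites Hoffmann-J{\o}rgensen--Shepp--Dudley for Gaussian seminorms, which excludes atoms on \((0,\infty)\) directly, so it does not need your extra observation (used with Tsirelson's theorem) that the left endpoint of the support is \(0\); and it obtains separability by building an explicit continuous modification (the bridge written as \(B_r(F_h(b))\), continuity of the drift term, and continuity of \(b\mapsto G_{u_i}^{S}(\ga_{h,b})\) by dominated convergence), where you appeal to tightness and \(L_2\)-continuity.
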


\begin{proof}[Proof of Lemma~\ref{lem:continuity-J}]
	Let \(Y\sim P^c\), \(W:=h^\top Y\). By \textup{(T4)}, \(\var(W)=h^\top\Sigma h>0\); by \textup{(T2)}, \(W\) has density \(f_h\) and continuous c.d.f.\ \(F_h\). Fix \(b_0\) with \(0<F_h(b_0)<1\). From \eqref{eq:cov-kernel}, \(\var(\mathbb L_h^{S}(\ga_{h,b}))=\var(\mathbb L^{(1)}(\ga_{h,b}))+\var(\mathbb L^{(2)}(\ga_{h,b}))+\var(\sum_{i\in S}G_{u_i}^{S}(\ga_{h,b})^\top Z_i^3)\), so it suffices that \(\var(\mathbb L^{(1)}(\ga_{h,b_0}))>0\), i.e.\ \(\var(\mathbf 1\{W\le b_0\}+f_h(b_0)W)>0\). If it vanished, \(\mathbf 1\{W\le b_0\}+f_h(b_0)W=c\) a.s.\ for some constant \(c\); if \(f_h(b_0)=0\) the vanishing of the variance would force \(\mathbf 1\{W\le b_0\}\) to be a.s.\ constant, contradicting \(0<F_h(b_0)<1\), while if \(f_h(b_0)\ne0\) it forces \(W\) to be constant on \(\{W>b_0\}\), contradicting that \(W\) has a density. Hence \(\mathbb P(\|\mathbb L_h^{S}\|_{\mathcal F_h}=0)=0\). We work with a separable modification of the process. Restricted to \(\mathcal F_h\), the marginal law of \(b\mapsto\mathbb B^{r,c}(\ga_{h,b})\), \(r=1,2\), coincides with that of \(b\mapsto B_r(F_h(b))\), since both are centered Gaussian processes with covariance \(F_h(b\wedge b')-F_h(b)F_h(b')\); as \(F_h\) is continuous, each restricted bridge admits a continuous modification, which we choose without altering its joint finite-dimensional distributions with \(Z^1,Z^2\) (the family \((\mathbb B^{r,c}(\ga_{h,b}),Z^r)_b\) is jointly Gaussian, so such a modification exists and leaves all cross-covariances, in particular \(\Cov(\mathbb B^{r,c}(\ga_{h,b}),c^\top Z^r)\), unchanged). Moreover \(b\mapsto f_h(b)h^\top Z^{r}\) is continuous by \textup{(T2)}. Finally, for every \(i\in S\), \(b\mapsto G_{u_i}^{S}(\ga_{h,b})\) is continuous by dominated convergence: if \(b_m\to b\) then \(\ga_{h,b_m}(x)\to\ga_{h,b}(x)\) for every \(x\) outside the Lebesgue-null hyperplane \(\{h^\top x=b\}\), while the integrand is dominated by \(\|((u_i^\top x)I_d+xu_i^\top)\nabla f(\QS_Sx+\mu)\|\le2\|x\|\,\|\nabla f(\QS_Sx+\mu)\|\), whose integral equals, after the orthogonal change of variables \(y=\QS_Sx+\mu\), \(2\int\|y-\mu\|\,\|\nabla f(y)\|\,dy\le C\int(1+\|y\|)\|\nabla f(y)\|\,dy<\infty\) by \textup{(T3)}. Hence \(b\mapsto\mathbb L_h^{S}(\ga_{h,b})=\mathbb L^{(1)}(\ga_{h,b})-\mathbb L^{(2)}(\ga_{h,b})-\sum_{i\in S}G_{u_i}^{S}(\ga_{h,b})^\top Z_i^3\) admits a continuous, hence separable, modification, and \(M_h^{S}:=\|\mathbb L_h^{S}\|_{\mathcal F_h}=\sup_{b\in\mathbb Q}|\mathbb L_h^{S}(\ga_{h,b})|\) a.s. Therefore \(M_h^{S}\) is a measurable seminorm of a centered Gaussian element; by \cite{HoffmannJorgensenSheppDudley1979}, its distribution has no atoms on \((0,\infty)\), the only possible atom being at \(0\). Since \(\mathbb P(M_h^{S}=0)=0\), the distribution function \(J(x)=\mathbb P(M_h^{S}\le x)\) is continuous on \(\mathbb R\).
\end{proof}

\begin{lemma}\label{lem:ulln-reflections}
	Fix \(h\in\mathbb S^{d-1}\). Assume \(\mathbb E\|X\|^2<\infty\) and \(\sup_{a}\|f_a\|_\infty<\infty\), where \(f_a\) is the density of \(a^\top(X-\mu)\). Let \(X_1,\dots,X_m\) be i.i.d.\ copies of \(X\) with mean \(\bar X_m\). Then
	\[
	\sup_{Q\in\Od}\sup_{t\in\mathbb R}\Big|\tfrac1m\sum_{j=1}^m\mathbf 1\{h^\top Q(X_j-\bar X_m)\le t\}-\mathbb P\big(h^\top Q(X-\mu)\le t\big)\Big|\xrightarrow{\ \mathrm{a.s.}\ }0.
	\]
\end{lemma}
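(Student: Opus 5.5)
The plan is to reduce the statement to a Glivenko--Cantelli theorem for the half-space class \(\mathcal F\), with centering handled separately. For \(Q\in\Od\), set \(a=Q^\top h\in\mathbb S^{d-1}\). Then \(h^\top Q(X_j-\bar X_m)\le t\) is the same event as \(a^\top(X_j-\mu)\le t+a^\top(\bar X_m-\mu)\). Write \(\mathbb P_m\) for the empirical measure of \(X_1-\mu,\dots,X_m-\mu\) and \(\Delta_m=\bar X_m-\mu\). The quantity inside the supremum becomes
\[
\mathbb P_m\ga_{a,t+a^\top\Delta_m}-P^c\ga_{a,t},
\]
so the double supremum over \(Q\) and \(t\) is dominated by a supremum over \(a\in\mathbb S^{d-1}\) and \(t\in\mathbb R\). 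This makes the supremum over \(\Od\) harmless: it is absorbed into the supremum over directions.

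Next, split the quantity into two terms:
\[
\bigl(\mathbb P_m-P^c\bigr)\ga_{a,t+a^\top\Delta_m}\qquad\text{and}\qquad P^c\ga_{a,t+a^\top\Delta_m}-P^c\ga_{a,t}.
\]
The first term is bounded by \(\|\mathbb P_m-P^c\|_{\mathcal F}\). This tends to \(0\) almost surely because \(\mathcal F\) is a VC class of indicators (as used in Lemma~\ref{lem:uniform-entropy-vw}) and hence a universal Glivenko--Cantelli class. The required measurability holds because the supremum can be restricted to a countable dense subfamily: rational \(a\) and \(t\), using right-continuity in \(t\) and the continuity of projected laws provided by the density bound.

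The second term is bounded by \(\sup_a\|f_a\|_\infty\,\|\Delta_m\|\), using the mean value bound \(|F_a(t+s)-F_a(t)|\le\|f_a\|_\infty|s|\) together with \(|a^\top\Delta_m|\le\|\Delta_m\|\). The strong law of large numbers, which needs only \(\mathbb E\|X\|<\infty\) and so is implied by \(\mathbb E\|X\|^2<\infty\), gives \(\Delta_m\to0\) almost surely. The two terms together give the claim.

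I expect the only delicate point to be the measurability of the supremum over the uncountable index set \(\Od\times\mathbb R\). I would handle it by the reduction above to rational \((a,t)\). Alternatively, the statement can be read in outer almost-sure convergence, the paper's stated convention, which makes that reduction unnecessary.
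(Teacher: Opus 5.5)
Your proposal is correct and follows the paper's proof essentially step for step. The substitution \(a=Q^\top h\) absorbs the supremum over \(\Od\) into one over directions. The same two-term split then gives a Glivenko--Cantelli term for half-spaces plus a centering term bounded by \(\sup_a\|f_a\|_\infty\,\|\bar X_m-\mu\|\), which vanishes by the strong law. The only cosmetic difference is that you use the empirical measure of the centered variables \(X_j-\mu\) and compare with \(P^c\), whereas the paper uses the uncentered empirical measure and compares with \(P\).
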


\begin{proof}[Proof of Lemma~\ref{lem:ulln-reflections}]
	For \(Q\in\Od\) write \(v=Q^\top h\in\mathbb S^{d-1}\), so \(h^\top Qx=v^\top x\); let \(\mathbb P_m=\frac1m\sum_j\delta_{X_j}\). Then \(\frac1m\sum_j\mathbf 1\{h^\top Q(X_j-\bar X_m)\le t\}=\mathbb P_m\mathbf 1\{v^\top\!\cdot\le t+v^\top\bar X_m\}\). With \(F_v^c(t):=\mathbb P(v^\top(X-\mu)\le t)\),
	\begin{multline*}
	\mathbb P_m\mathbf 1\{v^\top\!\cdot\le t+v^\top\bar X_m\}-F_v^c(t)=\underbrace{(\mathbb P_m-\mathbb P)\mathbf 1\{v^\top\!\cdot\le t+v^\top\bar X_m\}}_{(\mathrm I)}\\
	+\underbrace{\mathbb P(v^\top X\le t+v^\top\bar X_m)-\mathbb P(v^\top X\le t+v^\top\mu)}_{(\mathrm{II})}.
	\end{multline*}
	Half-spaces form a VC (hence Glivenko--Cantelli) class, so \(\sup_{v,s}|(\mathbb P_m-\mathbb P)\mathbf 1\{v^\top\!\cdot\le s\}|\to0\) a.s., bounding \(\sup_{Q,t}|(\mathrm I)|\). For \((\mathrm{II})\), the density of \(v^\top X\) is bounded by \(K_0:=\sup_a\|f_a\|_\infty\), so \(|(\mathrm{II})|\le K_0\|\bar X_m-\mu\|\) uniformly, and \(\|\bar X_m-\mu\|\to0\) a.s.\ by the strong law. Hence the supremum tends to \(0\) a.s.
\end{proof}

\section{Proof of Theorem~\ref{teo:mainth1-TA}}

\begin{proof}[Proof of Theorem~\ref{teo:mainth1-TA}]
	Fix \(S\in\mathcal S\); under \(H_0\), \(P_n^c\QS_{S,n}=P_n^c\). With \(\mathbb T_n(\ga)=\sqrt n(\widehat  P_n^{c,1}(\ga)-\widehat  P_n^{c,2}(\widehat\QS_S^3\ga))\), we decompose
	\begin{multline}\label{eq:4-terms-TA-corrected}
		\mathbb T_n=\underbrace{\sqrt n (\widehat  P_n^{c,1}-P_n^c)}_{\mathbf A_n}+\underbrace{\sqrt n (P_n^c-P_n^c\,\widehat\QS_S^3)}_{\mathbf B_n}\\
		+\underbrace{\sqrt n (P_n^c\QS_{S,n}-\widehat  P_n^{c,2}\QS_{S,n})}_{\mathbf C_n}+\underbrace{\sqrt n (\widehat  P_n^{c,2}-P_n^c)(\QS_{S,n}-\widehat\QS_S^3)}_{\mathbf D_n},
	\end{multline}
	with \(\mathbf A_n,\mathbf B_n,\mathbf C_n\) independent (independent blocks).
	
	\medskip\noindent\textbf{Terms \(\mathbf A_n\), \(\mathbf C_n\).} By Lemma~\ref{lem:lemaux-TA-global}, \(\mathbf A_n\xrightarrow{\mathcal L}\mathbb L^{(1)}\). For \(\mathbf C_n\), set \(Y_{n,j}=X_{n,j}-\mu_n\), \(j\in\aleph_{n,2}\), and \(Y'_{n,j}=\QS_{S,n}Y_{n,j}\); since \(\QS_{S,n}\) is a symmetry of \(P_n^c\), \((Y'_{n,j})_j\stackrel{\mathcal L}{=}(Y_{n,j})_j\), and because \(\widehat  P_n^{c,2}(\QS_{S,n}\ga)=\frac1{m_n}\sum_j\ga(Y'_{n,j}-\bar Y'^{(2)}_n)\) with \(\bar Y'^{(2)}_n=\QS_{S,n}\bar Y_n^{(2)}\) and \(P_n^c\QS_{S,n}=P_n^c\), the map \(\ga\mapsto\sqrt n(P_n^c(\QS_{S,n}\ga)-\widehat  P_n^{c,2}(\QS_{S,n}\ga))\) is the same functional of \((Y'_{n,j})_j\) that \(-\sqrt n(\widehat  P_n^{c,2}-P_n^c)\) is of \((Y_{n,j})_j\). Hence \(\mathbf C_n\stackrel{\mathcal L}{=}-\sqrt n(\widehat  P_n^{c,2}-P_n^c)\xrightarrow{\mathcal L}-\mathbb L^{(2)}\).
	
	\medskip\noindent\textbf{Term \(\mathbf D_n\).} Let \(\mathbb G_n^{(2)}=\sqrt n(\widehat  P_n^{c,2}-P_n^c)\); since \(Q^\top=Q\), \(\|\mathbf D_n\|_{\mathcal F}=\sup_{\ga}|\mathbb G_n^{(2)}(\QS_{S,n}\ga)-\mathbb G_n^{(2)}(\widehat\QS_S^3\ga)|\). As \(\mathbb G_n^{(2)}\) is recentered at \(\bar Y_n^{(2)}\), we use the decomposition from the proof of Lemma~\ref{lem:lemaux-TA-global}: with \(\Delta_{n,2}=\bar Y_n^{(2)}=\bar X_n^{(2)}-\mu_n\),
	\begin{equation}\label{eq:Gn2-decomp}
		\mathbb G_n^{(2)}(\ga_{a,b})=\underbrace{\sqrt{\tfrac n{m_n}}\alpha_{n,2}(\ga_{a,b})}_{\mathbb G_n^{\mathrm{raw}}}+\underbrace{f_{n,a}(b)a^\top\sqrt n\Delta_{n,2}}_{J_n(\ga_{a,b})}+R_n(\ga_{a,b}),\quad \|R_n\|_{\mathcal F}=o_{\mathbb P}(1).
	\end{equation}
	Raw part: \(\mathbb G_n^{\mathrm{raw}}\) is \(\rho_n\)-equicontinuous and \(\rho_n(\QS_{S,n}\ga,\widehat\QS_S^3\ga)\le\Delta_n^{\mathcal F}(\widehat\QS_S^3):=\sup_\ga\rho_n(\widehat\QS_S^3\ga,\QS_{S,n}\ga)\), so its contribution is \(\le\omega_n^{\mathrm{raw}}(\Delta_n^{\mathcal F}(\widehat\QS_S^3))\). Remainder: \(\le2\|R_n\|_{\mathcal F}=o_{\mathbb P}(1)\). Drift: with \(a_1=\QS_{S,n}a\), \(a_2=\widehat\QS_S^3 a\) at common level \(b\), \textup{(T2)}--\textup{(T2$'$)} give \(\|f_{n,a_1}(b)a_1-f_{n,a_2}(b)a_2\|\le(K_0+L_n)\|a_1-a_2\|\), and \(\|a_1-a_2\|\le4\sum_{i\in S}\|\widehat u_{i,n}^3-u_{i,n}\|=O_{\mathbb P}(n^{-1/2})\), so the drift contribution is \((K_0+L_n)O_{\mathbb P}(n^{-1/2})O_{\mathbb P}(1)=o_{\mathbb P}(1)\). Finally, a strip bound with \textup{(T2)} and Lemma~\ref{lem:muSigma} gives \(\Delta_n^{\mathcal F}(\widehat\QS_S^3)\le CK_0^{1/3}\|\widehat\QS_S^3-\QS_{S,n}\|_{\op}^{1/3}=O_{\mathbb P}(n^{-1/6})=o_{\mathbb P}(1)\). Hence \(\|\mathbf D_n\|_{\mathcal F}\xrightarrow{\mathbb P}0\).
	
	\medskip\noindent\textbf{Term \(\mathbf B_n\).} Under \(H_0\), \(\mathbf B_n=-\sqrt n(P_n^c\widehat\QS_S^3-P_n^c\QS_{S,n})\). By \textup{(T4)} and Lemma~\ref{lem:eigenvector-linearization}, the plug-in frame \((\widehat u_{i,n}^3)_{i\in S}\to(u_i)_{i\in S}\) lies in \(\mathcal U\) w.p.\(\to1\); Lemma~\ref{lem:dif-QRu-TA} at \(U_n=(u_{i,n})_{i\in S}\) with \(V=(\widehat u_{i,n}^3-u_{i,n})_{i\in S}\) gives \(\sup_\ga|\mathbf B_n(\ga)+\sum_{i\in S}G_{n,U_n,i}(\ga)^\top\sqrt n(\widehat u_{i,n}^3-u_{i,n})|\le C_n\sqrt n\|V\|^2=o_{\mathbb P}(1)\). By Lemma~\ref{lem:eigenvector-linearization}, \((\sqrt n(\widehat u_{i,n}^3-u_{i,n}))_{i\in S}\xrightarrow{\mathcal L}(Z_i^3)_{i\in S}\), and by the last part of Lemma~\ref{lem:dif-QRu-TA}, \(\|G_{n,U_n,i}-G_{u_i}^{S}\|_\infty\to0\), so \(\mathbf B_n\xrightarrow{\mathcal L}-\sum_{i\in S}G_{u_i}^{S}(\cdot)^\top Z_i^3\).
	
	\medskip Combining \eqref{eq:4-terms-TA-corrected}, the independence of \(\mathbf A_n,\mathbf B_n,\mathbf C_n\), and \(\mathbf D_n=o_{\mathbb P}(1)\) (Slutsky), \(\mathbb T_n\xrightarrow{\mathcal L}\mathbb L^{S}=\mathbb L^{(1)}-\mathbb L^{(2)}-\sum_{i\in S}G_{u_i}^{S}(\cdot)^\top Z_i^3\) as in \eqref{eq:limit-representation}. Since the map \(z\mapsto\sup_{\ga\in\mathcal F_h}|z(\ga)|\) is continuous on \(\ell^\infty(\mathcal F)\) and \(\sqrt n\,\widehat g_{n,h}(\widehat\QS_S^3)=\sup_{\ga\in\mathcal F_h}|\mathbb T_n(\ga)|\), the continuous mapping theorem yields \(\sqrt n\,\widehat g_{n,h}(\widehat\QS_S^3)\xrightarrow{\mathcal L}\|\mathbb L_h^{S}\|_{\mathcal F_h}\).
\end{proof}

\noindent Derivation of the covariance kernel \eqref{eq:cov-kernel}. For \(\ga_j=\ga_{h,b_j}\in\mathcal F_h\), with \(s(\ga_j)=f_h(b_j)h\), \(m(\ga_j)=\mathbb E[(X-\mu)(\ga_j(X-\mu)-P^c\ga_j)]\), and \(\mathbb L^{(r)}(\ga)=\sqrt3(\mathbb B^{r,c}(\ga)+s(\ga)^\top Z^{r})\), the independence of \(\mathbb L^{(1)},\mathbb L^{(2)},(Z_i^3)_{i\in S}\) and \(\Cov(\mathbb B^{r,c}(\ga),c^\top Z^r)=P^c[(\ga-P^c\ga)c^\top Y]\) (Lemma~\ref{lem:lemaux-TA-global}) give, for each \(r\), \(\Cov(\mathbb L^{(r)}(\ga_1),\mathbb L^{(r)}(\ga_2))=3(P^c(\ga_1\ga_2)-P^c\ga_1P^c\ga_2+s(\ga_1)^\top\Sigma s(\ga_2)+s(\ga_1)^\top m(\ga_2)+s(\ga_2)^\top m(\ga_1))\); the two independent copies contribute the factor \(6\). The eigenvector block contributes \(\Cov(\sum_i G_{u_i}^S(\ga_1)^\top Z_i^3,\sum_j G_{u_j}^S(\ga_2)^\top Z_j^3)=3\sum_{i,j\in S}G_{u_i}^S(\ga_1)^\top P(\psi_i\psi_j^\top)G_{u_j}^S(\ga_2)\). Summing yields \eqref{eq:cov-kernel}; for \(|S|=1\) it reduces to the axial kernel.

\begin{proof}[Proof of Corollary~\ref{cor:finite-projections}]
	The map \(z\mapsto\frac1k\sum_{\ell=1}^k\|z\|_{\mathcal F_{h_\ell}}\) is Lipschitz on \(\ell^\infty(\mathcal F)\), so Theorem~\ref{teo:mainth1-TA} and the continuous mapping theorem give \(T_{n,S}^{(k)}=\frac1k\sum_{\ell=1}^k\sup_{\ga\in\mathcal F_{h_\ell}}|\mathbb T_{n,S}(\ga)|\xrightarrow{\mathcal L}M_S^{(k)}\). Writing \(\mathcal F^{(k)}=\bigcup_{\ell=1}^k\mathcal F_{h_\ell}\), the continuity-in-the-threshold argument in the proof of Lemma~\ref{lem:continuity-J} applies to each direction \(h_\ell\) separately, so the restriction of \(\mathbb L^{S}\) to \(\mathcal F^{(k)}\) admits a separable version and \(M_S^{(k)}\) is a measurable seminorm of a separable centered Gaussian element. By \cite{HoffmannJorgensenSheppDudley1979} its law has no atoms on \((0,\infty)\); and since \(\{M_S^{(k)}=0\}\subseteq\{\|\mathbb L^{S}\|_{\mathcal F_{h_1}}=0\}\), the non-degeneracy step in the proof of Lemma~\ref{lem:continuity-J} gives \(\mathbb P(M_S^{(k)}=0)=0\). Hence the distribution function of \(M_S^{(k)}\) is continuous. Given this continuity, the bootstrap convergence and \(p\)-value arguments in the proof of Theorem~\ref{thm:bootstrap} (P\'olya's theorem, the quantile bounds, and the probability integral transform) go through verbatim with \(\|\cdot\|_{\mathcal F_h}\) replaced by \(\frac1k\sum_{\ell=1}^k\|\cdot\|_{\mathcal F_{h_\ell}}\), applied to \(T_{n,S}^{(k)}\) and \(T_{n,S}^{(k),\dagger}\). Finally, if \(h_1,\dots,h_k\) are drawn independently of the data and kept fixed for the observed statistic and for all bootstrap replicates, the conclusions hold conditionally on \((h_1,\dots,h_k)\) and, integrating over their law, also unconditionally; resampling new directions within each bootstrap replicate would introduce a source of randomness not covered by this argument.
\end{proof}

\section{Bootstrap validity}

\begin{proof}[Proof of Theorem~\ref{thm:bootstrap}]
	Fix \(S\in\mathcal S\). For a deterministic \((\nu_n)\in\mathcal C_S(P)\), applying the bootstrap construction with \(\nu_n\) in place of the data-dependent law and Theorem~\ref{teo:mainth1-TA} gives \(T_{n,S}^{\dagger,\nu_n}(h)\xrightarrow{\mathcal L}\|\mathbb L_h^{S}\|_{\mathcal F_h}\), so \(\|J_{n,S}^{\dagger,\nu_n}-J\|_\infty\to0\) by P\'olya and Lemma~\ref{lem:continuity-J}. By hypothesis there are \(\Omega_n\), \(\widetilde P_n\) with \(\mathbb P(\Omega_n)\to1\), \(\widetilde P_n=P_n\) on \(\Omega_n\), \((\widetilde P_n)\in\mathcal C_S(P)\) a.s.; letting \(\widetilde T_{n,S}^\dagger(h)\), \(\widetilde J_{n,S}^\dagger\) be built from \(\widetilde P_n\), Proposition~A.1 of \cite{romano1988} gives conditional convergence \(\widetilde T_{n,S}^\dagger(h)\xrightarrow{\mathcal L}\|\mathbb L_h^{S}\|_{\mathcal F_h}\) in probability, equivalently \(\|\widetilde J_{n,S}^\dagger-J\|_\infty\to0\) in probability (\(J\) continuous). On \(\Omega_n\), \(J_{n,S}^\dagger=\widetilde J_{n,S}^\dagger\), so \(\mathbb P(\|J_{n,S}^\dagger-J\|_\infty>\varepsilon)\le\mathbb P(\Omega_n^c)+\mathbb P(\|\widetilde J_{n,S}^\dagger-J\|_\infty>\varepsilon)\to0\). Also \(T_{n,S}(h)\xrightarrow{\mathcal L}\|\mathbb L_h^{S}\|_{\mathcal F_h}\) (Theorem~\ref{teo:mainth1-TA} applies since \(P\) satisfies the fixed-law conditions and \(H_{0,S}\)), so \(\|J_{n,S}-J\|_\infty\to0\) and \(\|J_{n,S}-J_{n,S}^\dagger\|_\infty\to0\) in probability.
	
	Let \(d_{n,S}^\dagger(1-\alpha)=\inf\{x:J_{n,S}^\dagger(x)\ge1-\alpha\}\), \(c_{1-\alpha}=\inf\{x:J(x)\ge1-\alpha\}\); by continuity of \(J\), \(J(c_{1-\alpha})=1-\alpha\). If \(c_{1-\alpha}\) is the unique \((1-\alpha)\)-quantile of \(J\), the quantile argument gives \(d_{n,S}^\dagger(1-\alpha)\xrightarrow{\mathbb P}c_{1-\alpha}\); combined with \(T_{n,S}(h)\xrightarrow{\mathcal L}Y\sim J\) and continuity of \(J\) at \(c_{1-\alpha}\), \(\mathbb P(T_{n,S}(h)>d_{n,S}^\dagger(1-\alpha))\to\mathbb P(Y>c_{1-\alpha})=1-J(c_{1-\alpha})=\alpha\).
	
	Without the uniqueness assumption we argue directly (\(d_{n,S}^\dagger\) is random, so we bound it below with high probability). Fix \(\varepsilon\in(0,\min\{\alpha,1-\alpha\})\) and set \(\underline c:=\inf\{x:J(x)\ge1-\alpha-\varepsilon\}\), so \(J(\underline c)=1-\alpha-\varepsilon\in(0,1-\alpha)\) by continuity. Since \(J_{n,S}^\dagger(\underline c)\xrightarrow{\mathbb P}J(\underline c)<1-\alpha\), with probability tending to one \(J_{n,S}^\dagger(\underline c)<1-\alpha\), whence \(d_{n,S}^\dagger(1-\alpha)\ge\underline c\). On that event \(\{T_{n,S}(h)>d_{n,S}^\dagger(1-\alpha)\}\subseteq\{T_{n,S}(h)>\underline c\}\), so
	\[
	\begin{aligned}
	\mathbb P\bigl(T_{n,S}(h)>d_{n,S}^\dagger(1-\alpha)\bigr)
	&\le
	\mathbb P\bigl(T_{n,S}(h)>\underline c\bigr)+\mathbb P\bigl(d_{n,S}^\dagger(1-\alpha)<\underline c\bigr)\\
	&\longrightarrow
	\mathbb P(Y>\underline c)=1-J(\underline c)=\alpha+\varepsilon,
	\end{aligned}
	\]
	using \(T_{n,S}(h)\xrightarrow{\mathcal L}Y\) and that \(\underline c\) is a continuity point of \(J\). Letting \(\varepsilon\downarrow0\) gives \(\limsup_n\mathbb P(T_{n,S}(h)>d_{n,S}^\dagger(1-\alpha))\le\alpha\), which suffices for level validity.
	
	Finally, uniform convergence \(\|J_{n,S}^\dagger-J\|_\infty\to0\) in probability with \(J\) continuous also gives \(\sup_x|J_{n,S}^\dagger(x-)-J(x)|\to0\) in probability (a monotone function within \(\varepsilon\) of a continuous one in sup norm is within \(\varepsilon\) of it at left limits too). Hence \(p_{n,S}=1-J_{n,S}^\dagger(T_{n,S}(h)-)=1-J(T_{n,S}(h))+o_{\mathbb P}(1)\). Since \(T_{n,S}(h)\xrightarrow{\mathcal L}Y\sim J\) and \(J\) is continuous, the continuous mapping theorem gives \(J(T_{n,S}(h))\xrightarrow{\mathcal L}J(Y)\), and \(J(Y)\sim\mathrm{Unif}(0,1)\) by the probability integral transform (\(J\) continuous). Therefore \(1-J(T_{n,S}(h))\xrightarrow{\mathcal L}\mathrm{Unif}(0,1)\) and \(\limsup_n\mathbb P(p_{n,S}\le\alpha)\le\alpha\).
\end{proof}

\begin{remark}\label{rem:smoothPN-supp}
	(Detailed verification of Remark~\ref{remsmoothPN} of the main text: growth and identification conditions for the smoothed symmetrized bootstrap law \(P_n=\widehat P_n^{\mathrm{sym}}*K_{b_n}\).)

	\emph{Spectral identification.} We argue that \(\widehat\QS_S^3\) coincides with the reflection determined by the ordered eigenvectors of \(\Sigma_n:=\cov(P_n)\), i.e.\ \(\widehat\QS_S^3=\QS_{S,n}(P_n)\), with probability tending to one. Since \(\widehat\QS_S^3=2\PS_{\widehat V}-I_d\) is a symmetry of \(P_n\) (where \(\widehat V:=\operatorname{span}\{\widehat u_{i,n}^3:i\in S\}\)), it commutes with \(\Sigma_n\), so \(\widehat V\) is \(\Sigma_n\)-invariant. This does not imply that each \(\widehat u_{i,n}^3\) is itself an eigenvector of \(\Sigma_n\) when \(|S|\ge2\); only the subspace \(\widehat V\) matters, since \(\QS_{S,n}(P_n)\) is determined by the span of the eigenvectors of \(\Sigma_n\) with ordered indices in \(S\). Under \(H_{0,S}\), the symmetrized second moments converge, \(\cov(\widehat P_n^{\mathrm{sym}})\xrightarrow{\mathbb P}\tfrac12\Sigma+\tfrac12\QS_S\Sigma\QS_S=\Sigma\) (using \(\QS_S\Sigma=\Sigma\QS_S\)), so \(\Sigma_n\xrightarrow{\mathbb P}\Sigma\) in operator norm as \(b_n\to0\); and \(\PS_{\widehat V}=\sum_{i\in S}\widehat u_{i,n}^3(\widehat u_{i,n}^3)^\top\xrightarrow{\mathbb P}\PS_{V_S}\) because \(\widehat u_{i,n}^3\to u_i\). Lemma~\ref{lem:spectral-id} then applies with \(\Sigma_n=\cov(P_n)\) and \(W_n=\widehat V\), and yields \(\widehat\QS_S^3=\QS_{S,n}(P_n)\) on the event where these convergences hold, i.e.\ with probability tending to one. This is precisely condition (ii) of the class \(\mathcal C_S(P)\). In particular the sign conventions on the \(\widehat u_{i,n}^3\) are irrelevant to \(\widehat\QS_S^3\), which depends only on the mirror subspace \(\widehat V\); the signs enter only in the linearization of Theorem~\ref{teo:mainth1-TA}.

	\emph{Growth conditions in \textup{(T2)}--\textup{(T2$'$)}.} Since here \(\mu_n=\widehat\mu_n\), the projected density of the centered law \(a^\top(X_n-\widehat\mu_n)\) is \(f_{n,a}(t)=\int k_{b_n}(t-a^\top(y-\widehat\mu_n))\,d\widehat P_n^{\mathrm{sym}}(y)\), where \(k\) is the one-dimensional marginal of \(K\). The kernel envelope gives the crude bound \(\sup_a\|f_{n,a}\|_\infty\le\|k\|_\infty b_n^{-1}\), but on the events in (SB-a) the actual supremum is bounded by \(\sup_a\|f_a\|_\infty+o(1)=O(1)\), as required by the revised \textup{(T2)}. Moreover, \(\sup_a\|f'_{n,a}\|_\infty\le \|k'\|_\infty b_n^{-2}=o(\sqrt n)\) whenever \(n^{1/4}b_n\to\infty\). The directional-Lipschitz condition \textup{(T2$'$)} is also automatic: differentiating \(f_{n,a}\) in \(a\) gives, for all \(a,a'\in\mathbb S^{d-1}\) and all \(t\),
	\[
	|f_{n,a}(t)-f_{n,a'}(t)|
	\le
	\|k'\|_\infty b_n^{-2}\,\|a-a'\|\int_{\mathbb R^d}\|y-\widehat\mu_n\|\,d\widehat P_n^{\mathrm{sym}}(y),
	\]
	so \textup{(T2$'$)} holds with \(L_n=O_{\mathbb P}(b_n^{-2})=o_{\mathbb P}(\sqrt n)\) under \(n^{1/4}b_n\to\infty\), on the event \(E_n(M)\) below.

	\emph{Growth condition in \textup{(T3)}.} If \(\int_{\mathbb R^d}(1+\|z\|^2)\|HK(z)\|_{\mathrm{op}}\,dz<\infty\), then
	\[
	\int_{\mathbb R^d}(1+\|x\|^2)\|Hf_n(x)\|_{\mathrm{op}}\,dx
	\le
	C b_n^{-2}\left(1+\int_{\mathbb R^d}\|y\|^2\,d\widehat P_n^{\mathrm{sym}}(y)\right).
	\]
	Hence, on the event \(E_n(M):=\{\int_{\mathbb R^d}\|y\|^2\,d\widehat P_n^{\mathrm{sym}}(y)\le M\}\), one has \(\int_{\mathbb R^d}(1+\|x\|^2)\|Hf_n(x)\|_{\mathrm{op}}\,dx\le C_M b_n^{-2}\). Since \(\mathbb P(E_n(M))\to1\) for \(M\) large enough, the growth requirement in \textup{(T3)} is ensured with probability tending to one whenever \(n^{1/4}b_n\to\infty\).

	\emph{Convergence parts and \textup{(T4)}.} The remaining part of \textup{(T3)}, \(\int_{\mathbb R^d}(1+\|x\|)\|\nabla f_n(x)-\nabla f(x)\|\,dx\to0\), falls within the scope of the classical theory of kernel estimation of density derivatives: if the target law has a \(C^1\) density and the bandwidth satisfies standard derivative-estimation conditions (for instance \(b_n\to0\) and \(nb_n^{d+2}\to\infty\)), one expects \textup{(T2)}--\textup{(T3)} to hold for the kernel-smoothed symmetrized bootstrap law; a convenient example is \(b_n=n^{-\alpha}\) with \(0<\alpha<\min\{1/4,1/(d+2)\}\). Sufficient conditions for (SB-a)--(SB-b) are expected to follow from suitable uniform-in-direction kernel density-derivative consistency results (e.g.\ a VC/entropy bound for the kernel-derivative class \(\{k_{b_n}^{(r)}(t-a^\top\cdot):a\in\mathbb S^{d-1},\,t\in\mathbb R\}\) combined with a stability argument in \(\widehat\QS_S^3\to\QS_S\), \(\widehat\mu_n\to\mu\)); we do not pursue this here. Finally, if \(\Sigma\) has simple eigenvalues and \(b_n\to0\), then \(\cov(P_n)\to\Sigma\) in operator norm in probability, so \textup{(T4)} follows from the continuity of eigenvalues and eigenvectors under simple spectrum. Thus the genuinely delicate part is not \(d_{\mathcal F}(P_n,P)\) or \textup{(T4)}, but the simultaneous verification of the derivative-based conditions \textup{(T2)}--\textup{(T3)}.
\end{remark}

\begin{proof}[Proof of Proposition~\ref{prop:smoothed-bootstrap-valid}]
	We build high-probability events \(\Omega_n\) from deterministic bounds and take the off-\(\Omega_n\) law to be an explicit member of \(\mathcal C_S(P)\), so that the modified sequence lies in \(\mathcal C_S(P)\) for every \(n\). Throughout, when \(S=\varnothing\) all maxima and sums over \(i\in S\) are read as vacuous, \(\QS_{S,n}=-I_d\) holds deterministically, and the spectral-identification step is trivial.
	
	A member of \(\mathcal C_S(P)\). Under \(H_{0,S}\), let \(\nu_n^\ast=P*K_{\tau_n}\) with \(\tau_n=n^{-\beta}\), \(0<\beta<\min\{1/4,1/(d+2)\}\). Since \(K\) is orthogonally invariant and \(P\) is invariant under \(A_{\mu,\QS_S}\), \(\nu_n^\ast\) is invariant under \(A_{\mu,\QS_S}\); moreover \(\cov(\nu_n^\ast)=\Sigma+\tau_n^2 I_d\) has the same eigenvectors as \(\Sigma\) with simple spectrum, so \(\QS_{S,n}(\nu_n^\ast)=\QS_S\) and (ii) holds exactly. For (iii), convolution gives \(\sup_a\|f_{n,a}^\ast\|_\infty\le\sup_a\|f_a\|_\infty\), while the fixed-law versions of \textup{(T1)}--\textup{(T4)} on \(P\), together with \(\tau_n=n^{-\beta}\), give \(\sup_a\|(f_{n,a}^\ast)'\|_\infty=O(\tau_n^{-2})\) and a Hessian constant of order \(O(\tau_n^{-2})=o(\sqrt n)\) since \(\beta<1/4\); condition \textup{(T2$'$)} is verified explicitly as follows. The projected density of the centered law is \(f_{n,a}^\ast=f_a*k_{\tau_n}\), where \(f_a\) is the fixed-law projected density of \(P\); hence, using the fixed-law \textup{(T2$'$)} bound \(\sup_b|f_a(b)-f_{a'}(b)|\le L\|a-a'\|\) for \(P\),
	\[
	\sup_b|f_{n,a}^\ast(b)-f_{n,a'}^\ast(b)|
	\le
	\int\sup_b|f_a(b-\tau_n z)-f_{a'}(b-\tau_n z)|\,k(z)\,dz
	\le
	L\|a-a'\|,
	\]
	so \textup{(T2$'$)} holds for \(\nu_n^\ast\) with \(L_n^\ast=L=O(1)=o(\sqrt n)\). For the convergence part of \textup{(T3)}, the gradient of \(f*K_{\tau_n}\) is \((\nabla f)*K_{\tau_n}\), so by a weighted-\(L_1\) approximate-identity argument (\(K\ge0\), \(\int K=1\), \(\int\|z\|K(z)\,dz<\infty\)),
	\[
	\int_{\mathbb R^d}(1+\|x\|)\big\|\nabla(f*K_{\tau_n})(x)-\nabla f(x)\big\|\,dx
	\le
	\int\!\!\int(1+\|x\|)\big\|\nabla f(x-\tau_n z)-\nabla f(x)\big\|\,dx\,K(z)\,dz\to0
	\]
	as \(\tau_n\to0\), under the fixed-law \textup{(T3)} integrability of \(\nabla f\). The projected consistency of \textup{(T2)} is likewise explicit: since \(f_{n,a}^\ast=f_a*k_{\tau_n}\), using \(\sup_a\|f_a'\|_\infty<\infty\) (fixed-law \textup{(T2)}),
	\[
	\sup_{a,b}|f_{n,a}^\ast(b)-f_a(b)|
	=\sup_{a,b}\Big|\int\{f_a(b-\tau_n z)-f_a(b)\}k(z)\,dz\Big|
	\le\tau_n\sup_a\|f_a'\|_\infty\!\int|z|k(z)\,dz\to0.
	\]
	Together with (i) (which follows as in Proposition~\ref{prop:dF-smoothed-bootstrap}, \(\tau_n\to0\)), \((\nu_n^\ast)\in\mathcal C_S(P)\), so \(\mathcal C_S(P)\neq\varnothing\).
	
	Explicit events. Choose a deterministic sequence \(\varepsilon_n\downarrow0\) slowly enough that each of the convergence-in-probability bounds appearing in \(\Omega_n\) below holds with probability tending to one (possible since each underlying quantity is \(o_{\mathbb P}(1)\)). Let \(\bar K_{1,n},\bar L_n,\bar A_n^{\mathrm{H}}=o(\sqrt n)\) dominate the derivative, directional-Lipschitz and Hessian bounds of Remark~\ref{rem:smoothPN-supp} (all \(O(b_n^{-2})=o(\sqrt n)\) since \(n^{1/4}b_n\to\infty\)). For \(M\) large set
	\begin{multline*}
		\Omega_n=\{d_{\mathcal F}(P_n,P)\le\varepsilon_n\}\cap\{\textstyle\sup_a\|f_{n,a}-f_a\|_\infty\le\varepsilon_n\}\\
		\cap\{\textstyle\int(1+\|x\|)\|\nabla f_n-\nabla f\|\le\varepsilon_n\}\\
		\cap\{\textstyle\int\|y\|^{4+\eta}\,d\widehat P_n^{\mathrm{sym}}\le M\}\cap\{\|\cov(P_n)-\Sigma\|_{\op}\le\varepsilon_n\}\\
		\cap\{\textstyle\max_{i\in S}\|\widehat u_{i,n}^3-u_i\|\le\varepsilon_n\}.
	\end{multline*}
	On \(\Omega_n\): (i) holds with \(d_{\mathcal F}(P_n,P)\le\varepsilon_n\); (SB-a) gives \(\sup_a\|f_{n,a}\|_\infty\le\sup_a\|f_a\|_\infty+\varepsilon_n\), while the remaining growth constants obey \(K_{1,n}\le\bar K_{1,n}\), \(L_n\le\bar L_n\), \(A_n^{\mathrm{H}}\le\bar A_n^{\mathrm{H}}\), and (SB-a),(SB-b) give the convergence parts of \textup{(T2)},\textup{(T3)}. Condition \textup{(T1)} for \(P_n=\widehat P_n^{\mathrm{sym}}*K_{b_n}\) follows from the moment of the convolution: by the \(c_r\)-inequality,
	\[
	\int\|x\|^{4+\eta}\,dP_n(x)
	\le
	C\Big(\int\|y\|^{4+\eta}\,d\widehat P_n^{\mathrm{sym}}(y)+b_n^{4+\eta}\int\|z\|^{4+\eta}K(z)\,dz\Big)
	\le
	C(M+1),
	\]
	using \(\int\|y\|^{4+\eta}d\widehat P_n^{\mathrm{sym}}\le M\) and the compact support of \(K\); hence \textup{(T1)} holds with a uniform bound. The moment event also has probability tending to one, by the law of large numbers (not merely tightness). Recall that \(\widehat P_n^{\mathrm{sym}}=\frac1{2|I_n|}\sum_{j\in I_n}\bigl(\delta_{X_j}+\delta_{A_{\widehat\mu_n,\widehat\QS_S^3}(X_j)}\bigr)\), where \(I_n\) indexes the two thirds of the sample not used to estimate the eigenvectors. With \(r=4+\eta\) and \(N_n=|I_n|\), orthogonality of \(\widehat\QS_S^3\) gives \(\|A_{\widehat\mu_n,\widehat\QS_S^3}(X_j)\|\le\|X_j\|+2\|\widehat\mu_n\|\), so
	\[
	\int\|y\|^{r}\,d\widehat P_n^{\mathrm{sym}}(y)
	=\frac1{2N_n}\sum_{j\in I_n}\bigl(\|X_j\|^{r}+\|A_{\widehat\mu_n,\widehat\QS_S^3}(X_j)\|^{r}\bigr)
	\le C_r\Bigl\{\frac1{N_n}\sum_{j\in I_n}\|X_j\|^{r}+\|\widehat\mu_n\|^{r}\Bigr\}.
	\]
	By the law of large numbers and \(\mathbb E\|X\|^{r}<\infty\), \(\frac1{N_n}\sum_{j\in I_n}\|X_j\|^{r}\xrightarrow{\mathbb P}\mathbb E\|X\|^{r}\) and \(\widehat\mu_n\xrightarrow{\mathbb P}\mu\), so the right-hand side converges in probability to \(C_r(\mathbb E\|X\|^{r}+\|\mu\|^{r})\). (We use convergence in probability rather than an almost-sure statement, since the random partitions for different \(n\) are not coupled as a nested sequence.) Choosing a deterministic \(M>C_r(\mathbb E\|X\|^{r}+\|\mu\|^{r})\) therefore gives \(\mathbb P(\int\|y\|^{r}\,d\widehat P_n^{\mathrm{sym}}(y)\le M)\to1\). Finally, writing \(\Sigma_n:=\cov(P_n)\): because \(P_n^c\) is invariant under \(\widehat\QS_S^3=2\PS_{\widehat V}-I_d\), we have \(\widehat\QS_S^3\Sigma_n=\Sigma_n\widehat\QS_S^3\), so \(\Sigma_n\) commutes with \(\PS_{\widehat V}\) and the \(+1\)-eigenspace \(\widehat V=\operatorname{span}\{\widehat u_{i,n}^3:i\in S\}\) is \(\Sigma_n\)-invariant. Together with the closeness \(\|\Sigma_n-\Sigma\|_{\op}\le\varepsilon_n\) and \(\|\PS_{\widehat V}-\PS_{V_S}\|_{\op}\le C\varepsilon_n\) (for \(n\) large), Lemma~\ref{lem:spectral-id} applies, so \(\widehat\QS_S^3=\QS_{S,n}(P_n)\) and (ii) holds. By (SB-a),(SB-b), Proposition~\ref{prop:dF-smoothed-bootstrap}, the law of large numbers, and the slow choice of \(\varepsilon_n\), \(\mathbb P(\Omega_n)\to1\).
	
	Modification. Redefining \(\Omega_n=\varnothing\) for the finitely many small \(n\) where the spectral identification may fail, set \(\widetilde P_n=P_n\) on \(\Omega_n\) and \(\widetilde P_n=\nu_n^\ast\) off \(\Omega_n\). Then \(\widetilde P_n=P_n\) on \(\Omega_n\) and \(\mathbb P(\Omega_n)\to1\); and for every \(\omega\), the realized sequence \((\widetilde P_n(\omega))_{n\ge1}\) satisfies conditions (i)--(iii) with the deterministic asymptotic bounds \(\max(\varepsilon_n,\varepsilon_n^\ast)\downarrow0\), a common finite bound for the projected densities, and \(\max(\bar K_1,\bar K_1^\ast,\bar L,\bar L^\ast,\bar A^{\mathrm H},(\bar A^{\mathrm H})^\ast)=o(\sqrt n)\), and (ii) exactly (both \(P_n\) on \(\Omega_n\) and \(\nu_n^\ast\) obey the invariance condition for all sufficiently large $n$, while the finitely many initial terms are immaterial, so the whole sequence obeys the sequence-level conditions (i),(iii)). Hence \((\widetilde P_n)_{n\ge1}\in\mathcal C_S(P)\) almost surely, and Theorem~\ref{thm:bootstrap} gives \(\limsup_n\mathbb P(p_{n,S}\le\alpha)\le\alpha\).
\end{proof}

\begin{proof}[Proof of Proposition~\ref{prop:dF-smoothed-bootstrap}]
	Let \(\widetilde P_n\) be the empirical measure of the unsymmetrized sample; \(d_{\mathcal F}(\widetilde P_n,P)\to0\) in probability (VC). Write \(A_{\widehat\mu_n,\widehat\QS_S^3}(x)=\widehat\mu_n+\widehat\QS_S^3(x-\widehat\mu_n)\), \(A_{\mu,\QS_S}(x)=\mu+\QS_S(x-\mu)\); under \(H_{0,S}\), \(P(A_{\mu,\QS_S}^{-1}\ga)=P(\ga)\). From \(\widehat P_n^{\mathrm{sym}}(\ga)=\tfrac12\widetilde P_n(\ga)+\tfrac12\widetilde P_n(A_{\widehat\mu_n,\widehat\QS_S^3}^{-1}\ga)\), adding and subtracting \(P(A_{\widehat\mu_n,\widehat\QS_S^3}^{-1}\ga)\) and using \(A_{\widehat\mu_n,\widehat\QS_S^3}^{-1}\ga\in\mathcal F\),
	\[
	\|\widehat P_n^{\mathrm{sym}}-P\|_{\mathcal F}\le\|\widetilde P_n-P\|_{\mathcal F}+\sup_{\ga\in\mathcal F}|P(A_{\widehat\mu_n,\widehat\QS_S^3}^{-1}\ga)-P(A_{\mu,\QS_S}^{-1}\ga)|.
	\]
	The last term is bounded by an explicit strip argument. Write \(\delta_n:=\|\widehat\mu_n-\mu\|+\|\widehat\QS_S^3-\QS_S\|_{\op}\) (with \(\|\widehat\QS_S^3-\QS_S\|_{\op}\le4\sum_{i\in S}\|\widehat u_{i,n}^3-u_i\|\)). Since
	\[
	A_{\widehat\mu_n,\widehat\QS_S^3}(x)-A_{\mu,\QS_S}(x)
	=(I_d-\widehat\QS_S^3)(\widehat\mu_n-\mu)+(\widehat\QS_S^3-\QS_S)(x-\mu)
	\]
	and \(\|I_d-\widehat\QS_S^3\|_{\op}\le2\) (orthogonality of \(\widehat\QS_S^3\)),
	\[
	\|A_{\widehat\mu_n,\widehat\QS_S^3}(x)-A_{\mu,\QS_S}(x)\|
	\le 2\|\widehat\mu_n-\mu\|+\|\widehat\QS_S^3-\QS_S\|_{\op}\|x-\mu\|
	\le \delta_n\bigl(\|x-\mu\|+2\bigr).
	\]
	Fix \(\ga=\ga_{a,b}\in\mathcal F\) and \(\varepsilon>0\). On the symmetric difference \(\{a^\top A_{\widehat\mu_n,\widehat\QS_S^3}(X)\le b\}\,\triangle\,\{a^\top A_{\mu,\QS_S}(X)\le b\}\) one has \(|a^\top A_{\mu,\QS_S}(X)-b|\le\|A_{\widehat\mu_n,\widehat\QS_S^3}(X)-A_{\mu,\QS_S}(X)\|\), so this event is contained in
	\[
	\bigl\{|a^\top A_{\mu,\QS_S}(X)-b|\le\varepsilon\bigr\}\cup\bigl\{\delta_n(\|X-\mu\|+2)>\varepsilon\bigr\}.
	\]
	Since \(a^\top A_{\mu,\QS_S}(X)=a^\top\mu+(\QS_S a)^\top(X-\mu)\) with \(\|\QS_S a\|=1\), the first event has probability at most \(2K_0\varepsilon\), where \(K_0:=\sup_{a'\in\mathbb S^{d-1}}\|f_{a'}\|_\infty<\infty\). For the second, on the event \(\{\delta_n\le\varepsilon/4\}\) it implies \(\|X-\mu\|>\varepsilon/\delta_n-2\ge\varepsilon/(2\delta_n)\), so by Chebyshev and \(\mathbb E\|X\|^2<\infty\) its probability is at most \(4\,\mathbb E\|X-\mu\|^2\,\delta_n^2/\varepsilon^2\). Hence, on \(\{\delta_n\le\varepsilon/4\}\), uniformly in \(\ga\in\mathcal F\),
	\[
	\sup_{\ga\in\mathcal F}\bigl|P(A_{\widehat\mu_n,\widehat\QS_S^3}^{-1}\ga)-P(A_{\mu,\QS_S}^{-1}\ga)\bigr|
	\le 2K_0\varepsilon+4\,\mathbb E\|X-\mu\|^2\,\frac{\delta_n^2}{\varepsilon^2}.
	\]
	Take \(\varepsilon=\delta_n^{2/3}\) (if \(\delta_n=0\) the supremum vanishes trivially); the constraint \(\delta_n\le\varepsilon/4\) then reads \(\delta_n\le4^{-3}\), and since \(\delta_n\xrightarrow{\mathbb P}0\) (as \(\widehat\mu_n\xrightarrow{\mathbb P}\mu\) and \(\widehat u_{i,n}^3\xrightarrow{\mathbb P}u_i\), \(i\in S\)), \(\mathbb P(\delta_n\le4^{-3})\to1\). On that event the bound equals \(C\delta_n^{2/3}\) with \(C=2K_0+4\,\mathbb E\|X-\mu\|^2\), and therefore \(\sup_{\ga}|P(A_{\widehat\mu_n,\widehat\QS_S^3}^{-1}\ga)-P(A_{\mu,\QS_S}^{-1}\ga)|=O_{\mathbb P}(\delta_n^{2/3})\to0\) in probability. Hence \(\|\widehat P_n^{\mathrm{sym}}-P\|_{\mathcal F}\to0\) in probability.
	
	It remains to pass from \(\widehat P_n^{\mathrm{sym}}\) to \(P_n=\widehat P_n^{\mathrm{sym}}*K_{b_n}\). We do not bound \(\|\widehat P_n^{\mathrm{sym}}*K_{b_n}-\widehat P_n^{\mathrm{sym}}\|_{\mathcal F}\) directly, since for an atomic measure a half-space can shift its boundary across atoms and this quantity need not be \(O(b_n)\); instead we use the smoothness of \(P\). Writing a draw from \(P_n\) as \(Y+b_n Z\) with \(Y\sim\widehat P_n^{\mathrm{sym}}\), \(Z\sim K\) independent, \(\ga_{a,b}(Y+b_nZ)=\ga_{a,b-b_na^\top Z}(Y)\), so \(P_n\ga_{a,b}=\int\widehat P_n^{\mathrm{sym}}(\ga_{a,b-b_na^\top z})\,K(z)\,dz\). Hence, using \(\int K=1\) and \(\ga_{a,b-b_na^\top z}\in\mathcal F\),
	\begin{multline*}
	|P_n\ga_{a,b}-P\ga_{a,b}|
	\le
	\int\big|\widehat P_n^{\mathrm{sym}}(\ga_{a,b-b_na^\top z})-P(\ga_{a,b-b_na^\top z})\big|K(z)\,dz\\
	+\int\big|P(\ga_{a,b-b_na^\top z})-P(\ga_{a,b})\big|K(z)\,dz.
	\end{multline*}
	The first integral is \(\le\|\widehat P_n^{\mathrm{sym}}-P\|_{\mathcal F}\). For the second, \(P\ga_{a,b}=F_a(b)\) with \(F_a\) the c.d.f.\ of \(a^\top X\); since \(a^\top X\) has density bounded by \(\sup_a\|f_a\|_\infty\), \(|F_a(b-b_na^\top z)-F_a(b)|\le\sup_a\|f_a\|_\infty\,b_n|a^\top z|\); moreover, by the orthogonal invariance of \(K\), \(a^\top Z\sim k\) (the one-dimensional marginal) for every \(a\in\mathbb S^{d-1}\), so \(\int|a^\top z|K(z)\,dz=\int_{\mathbb R}|t|k(t)\,dt\), and the second integral is \(\le\sup_a\|f_a\|_\infty\,b_n\int_{\mathbb R}|t|k(t)\,dt=O(b_n)\), uniformly in \((a,b)\). Therefore \(\|P_n-P\|_{\mathcal F}\le\|\widehat P_n^{\mathrm{sym}}-P\|_{\mathcal F}+O(b_n)\to0\) in probability.
\end{proof}

\section{Consistency}

\begin{remark}\label{rem:sym-assumptions}
	(Transfer of the fixed-law assumptions from \(P\) to \(P^{\mathrm{sym}}_S\); cf.\ the discussion following Proposition~\ref{prop:smoothed-bootstrap-alt} in the main text.) Since \(f^{\mathrm{sym}}_S(x)=\tfrac12 f(x)+\tfrac12 f(A_S^{-1}x)\) and \(A_S\) is affine orthogonal, affine orthogonal symmetrization preserves the weighted \(C^2\) regularity and the covariance eigenstructure, so the deterministic parts of \textup{(T1)}--\textup{(T4)},\,\textup{(T2$'$)} for \(P^{\mathrm{sym}}_S\) follow from those for \(P\). For \textup{(T3)} this is explicit: since \(A_S^{-1}\) has orthogonal linear part \(\QS_S\), \(H(f\circ A_S^{-1})(x)=\QS_S^\top Hf(A_S^{-1}x)\QS_S\), so \(\int(1+\|x\|^2)\|H(f\circ A_S^{-1})(x)\|_{\op}\,dx\le C\int(1+\|y\|^2)\|Hf(y)\|_{\op}\,dy\). For the projected conditions: the density of \(a^\top(X-\mu)\) under \(P^{\mathrm{sym}}_S\) is \(f^{\mathrm{sym}}_{S,a}=\tfrac12 f_a^{P}+\tfrac12 f_{\QS_S a}^{P}\), and since \(\QS_S\) is orthogonal (\(\QS_S a\in\mathbb S^{d-1}\)) one gets \(\sup_a\|f^{\mathrm{sym}}_{S,a}\|_\infty\le\sup_a\|f_a^{P}\|_\infty\), \(\sup_a\|(f^{\mathrm{sym}}_{S,a})'\|_\infty\le\sup_a\|(f_a^{P})'\|_\infty\), and, if \(P\) satisfies \textup{(T2$'$)} with constant \(L\), \(\sup_b|f^{\mathrm{sym}}_{S,a}(b)-f^{\mathrm{sym}}_{S,a'}(b)|\le\tfrac12 L\|a-a'\|+\tfrac12 L\|\QS_S(a-a')\|=L\|a-a'\|\). The stochastic conditions (SB-a)--(SB-b) remain kernel-consistency assumptions for the target \(P^{\mathrm{sym}}_S\). The two facts that drive the proof below are that \(P^{\mathrm{sym}}_S\) is exactly invariant under \(A_S\) by construction, and that \(\cov(P^{\mathrm{sym}}_S)=\tfrac12\Sigma+\tfrac12\QS_S\Sigma\QS_S=\Sigma\) (because \(\QS_S\) commutes with \(\Sigma\)), so \(P^{\mathrm{sym}}_S\) has the same ordered eigenvectors as \(\Sigma\) with simple spectrum. Finally, \(\widehat\QS_S^3\xrightarrow{\mathbb P}\QS_S\) and \(\widehat\mu_n\xrightarrow{\mathbb P}\mu\) hold whether or not \(H_{0,S}\) holds: this follows from the law of large numbers for the sample covariance and the continuous dependence of simple eigenvectors on the matrix, and uses only \(\mathbb E\|X\|^2<\infty\) and \(\lambda_1>\cdots>\lambda_d\).
\end{remark}

\begin{proof}[Proof of Proposition~\ref{prop:smoothed-bootstrap-alt}]
	The argument parallels that of Proposition~\ref{prop:smoothed-bootstrap-valid} with target \(P^{\mathrm{sym}}_S\); we indicate the changes. Since \(P^{\mathrm{sym}}_S=\tfrac12 P+\tfrac12 P\circ A_S^{-1}\) with \(A_S(x)=\mu+\QS_S(x-\mu)\) and \(\QS_S\) built from the eigenvectors of \(\Sigma\), \(P^{\mathrm{sym}}_S\) is exactly invariant under \(A_S\), and \(\cov(P^{\mathrm{sym}}_S)=\tfrac12\Sigma+\tfrac12\QS_S\Sigma\QS_S=\Sigma\) (using \(\QS_S\Sigma=\Sigma\QS_S\)); hence \(P^{\mathrm{sym}}_S\) has the ordered eigenvectors of \(\Sigma\), with simple spectrum, so \(\QS_{S,n}(P^{\mathrm{sym}}_S)=\QS_S\). The principal-component estimates are consistent regardless of \(H_{0,S}\): \(\widehat\QS_S^3\xrightarrow{\mathbb P}\QS_S\) and \(\widehat\mu_n\xrightarrow{\mathbb P}\mu\). Moreover, since \(P_{n,S}=\widehat P_n^{\mathrm{sym}}*K_{b_n}\) with \(K\) centered of covariance \(I_d\), \(\cov(P_{n,S})=\cov(\widehat P_n^{\mathrm{sym}})+b_n^2 I_d\); and under a fixed alternative \(\cov(\widehat P_n^{\mathrm{sym}})\xrightarrow{\mathbb P}\tfrac12\Sigma+\tfrac12\QS_S\Sigma\QS_S=\Sigma\), so \(\cov(P_{n,S})\xrightarrow{\mathbb P}\Sigma\) as \(b_n\to0\). This gives \textup{(T4)} for the target \(\Sigma\). Moreover, because \(P_{n,S}\) is exactly invariant under the affine reflection with linear part \(\widehat\QS_S^3=2\PS_{\widehat V}-I_d\), its covariance commutes with \(\widehat\QS_S^3\), and therefore \(\widehat V\) is a \(\cov(P_{n,S})\)-invariant subspace; together with \(\PS_{\widehat V}\xrightarrow{\mathbb P}\PS_{V_S}\) and \(\cov(P_{n,S})\xrightarrow{\mathbb P}\Sigma\), Lemma~\ref{lem:spectral-id} yields the spectral identification \(\widehat\QS_S^3=\QS_{S,n}(P_{n,S})\).
	
	Convergence \(d_{\mathcal F}(\widehat P_n^{\mathrm{sym}},P^{\mathrm{sym}}_S)\to0\). This is not literally the null-case comparison of Proposition~\ref{prop:dF-smoothed-bootstrap}, since here the original sample comes from \(P\) while the target is \(P^{\mathrm{sym}}_S\ne P\); we write the decomposition. Let \(\widetilde P_n\) be the empirical measure of the (unsymmetrized) sample, and \(A_{\widehat\mu_n,\widehat\QS_S^3}(x)=\widehat\mu_n+\widehat\QS_S^3(x-\widehat\mu_n)\), \(A_{\mu,\QS_S}(x)=\mu+\QS_S(x-\mu)\). By construction \(\widehat P_n^{\mathrm{sym}}(\ga)=\tfrac12\widetilde P_n(\ga)+\tfrac12\widetilde P_n(A_{\widehat\mu_n,\widehat\QS_S^3}^{-1}\ga)\) and, by definition of the target, \(P^{\mathrm{sym}}_S(\ga)=\tfrac12 P(\ga)+\tfrac12 P(A_{\mu,\QS_S}^{-1}\ga)\), so
	\[
	\widehat P_n^{\mathrm{sym}}(\ga)-P^{\mathrm{sym}}_S(\ga)
	=\tfrac12\bigl\{\widetilde P_n(\ga)-P(\ga)\bigr\}
	+\tfrac12\bigl\{\widetilde P_n(A_{\widehat\mu_n,\widehat\QS_S^3}^{-1}\ga)-P(A_{\mu,\QS_S}^{-1}\ga)\bigr\}.
	\]
	Adding and subtracting \(P(A_{\widehat\mu_n,\widehat\QS_S^3}^{-1}\ga)\) in the second bracket and using \(A_{\widehat\mu_n,\widehat\QS_S^3}^{-1}\ga\in\mathcal F\),
	\[
	\|\widehat P_n^{\mathrm{sym}}-P^{\mathrm{sym}}_S\|_{\mathcal F}
	\le\|\widetilde P_n-P\|_{\mathcal F}
	+\sup_{\ga\in\mathcal F}\bigl|P(A_{\widehat\mu_n,\widehat\QS_S^3}^{-1}\ga)-P(A_{\mu,\QS_S}^{-1}\ga)\bigr|.
	\]
	The first term \(\to0\) in probability by the VC Glivenko--Cantelli theorem; the second is controlled by the same strip argument as in Proposition~\ref{prop:dF-smoothed-bootstrap}, which here uses the regularity of the original law \(P\) (the fixed-law \textup{(T1)}--\textup{(T2)}: \(\sup_a\|f_a^{P}\|_\infty<\infty\) and \(\mathbb E_P\|X\|^2<\infty\), since the measure being displaced is \(P\)), giving \(O((\|\widehat\mu_n-\mu\|+\sum_{i\in S}\|\widehat u_{i,n}^3-u_i\|)^{2/3})\to0\) (using \(\widehat\mu_n\to\mu\), \(\widehat\QS_S^3\to\QS_S\)). Hence \(\|\widehat P_n^{\mathrm{sym}}-P^{\mathrm{sym}}_S\|_{\mathcal F}\to0\). After convolution, writing a draw from \(P_{n,S}\) as \(Y+b_nZ\) (\(Y\sim\widehat P_n^{\mathrm{sym}}\), \(Z\sim K\) independent), \(\ga_{a,b}(Y+b_nZ)=\ga_{a,b-b_na^\top Z}(Y)\), so \(P_{n,S}\ga_{a,b}=\int\widehat P_n^{\mathrm{sym}}(\ga_{a,b-b_na^\top z})K(z)\,dz\), and, using the smoothness of the target \(P^{\mathrm{sym}}_S\) (its projected density \(f^{\mathrm{sym}}_{S,a}\)),
	\begin{align*}
	|P_{n,S}\ga_{a,b}-P^{\mathrm{sym}}_S\ga_{a,b}|
	&\le\int\bigl|\widehat P_n^{\mathrm{sym}}(\ga_{a,b-b_na^\top z})-P^{\mathrm{sym}}_S(\ga_{a,b-b_na^\top z})\bigr|K(z)\,dz\\
	&\quad+\int\bigl|P^{\mathrm{sym}}_S(\ga_{a,b-b_na^\top z})-P^{\mathrm{sym}}_S(\ga_{a,b})\bigr|K(z)\,dz\\
	&\le d_{\mathcal F}(\widehat P_n^{\mathrm{sym}},P^{\mathrm{sym}}_S)+\sup_a\|f^{\mathrm{sym}}_{S,a}\|_\infty\,b_n\!\int|a^\top z|K(z)\,dz.
	\end{align*}
	The first term \(\to0\) in probability by the previous paragraph, and the second is \(O(b_n)\) (finite first moment of \(K\)), uniformly in \((a,b)\). Hence \(d_{\mathcal F}(P_{n,S},P^{\mathrm{sym}}_S)\le d_{\mathcal F}(\widehat P_n^{\mathrm{sym}},P^{\mathrm{sym}}_S)+O(b_n)\to0\) in probability, exactly as in Proposition~\ref{prop:dF-smoothed-bootstrap}.
	
	The rest is verbatim: defining \(\Omega_{n,S}\) as \(\Omega_n\) in Proposition~\ref{prop:smoothed-bootstrap-valid} with \(P^{\mathrm{sym}}_S\) in place of \(P\) (and the fixed-law and \textup{(SB)}\((P^{\mathrm{sym}}_S)\) conditions on \(P^{\mathrm{sym}}_S\)), and taking the off-\(\Omega_{n,S}\) law to be \(P^{\mathrm{sym}}_S*K_{\tau_n}\in\mathcal C_S(P^{\mathrm{sym}}_S)\), the identical verification yields \(\mathbb P(\Omega_{n,S})\to1\), \(\widetilde P_{n,S}=P_{n,S}\) on \(\Omega_{n,S}\), and \((\widetilde P_{n,S})\in\mathcal C_S(P^{\mathrm{sym}}_S)\) surely. (The moment event there requires a \((4+\eta)\)-moment under the original law \(P\); this is supplied by \textup{(T1)} for \(P^{\mathrm{sym}}_S\), since \(A_S\) is affine orthogonal and hence \(\int\|x\|^{4+\eta}\,dP^{\mathrm{sym}}_S<\infty\Leftrightarrow\int\|x\|^{4+\eta}\,dP<\infty\).) Thus, for this fixed \(S\), the approximation hypothesis of Theorem~\ref{thm:power} is verified.
\end{proof}

\begin{proof}[Proof of Corollary~\ref{cor:global-consistency}]
	Applying Proposition~\ref{prop:smoothed-bootstrap-alt} to each \(S\in\mathcal S\) -- a finite family -- verifies the approximation hypothesis of Theorem~\ref{thm:power} for every \(S\in\mathcal S\); the conclusion then follows from Theorem~\ref{thm:power}.
\end{proof}

\begin{proof}[Proof of Theorem~\ref{thm:power}]
	If \eqref{hs} is false, \(X-\mu\not\overset{\mathcal L}{=}\QS_S(X-\mu)\) for every \(S\), so each \(E_S:=\{h:g_h(\QS_S)=0\}\) has surface measure zero (sharp Cram\'er--Wold) and, \(\mathcal S\) being finite, \(g_h(\QS_S)>0\) for all \(S\) for surface-a.e.\ \(h\). Fix such \(h\), \(S\). By Lemma~\ref{lem:ulln-reflections}, \(\sup_{Q\in\Od}\sup_t|\widehat F^2_{n,h,Q}(t)-F^2_Q(t)|\to0\) and \(\sup_t|\widehat F^1_{n,h}(t)-F^1(t)|\to0\) in probability; the reverse triangle inequality gives \(\sup_{Q\in\Od}|\widehat g_{n,h}(Q)-g_h(Q)|\to0\). The map \(Q\mapsto g_h(Q)\) is continuous, with a self-contained strip bound: since \(g_h(Q)=\|F^1-F^2_Q\|_\infty\) with \(F^2_Q\) the c.d.f.\ of \(h^\top Q(X-\mu)\), \(|g_h(Q)-g_h(Q')|\le\sup_t|F^2_Q(t)-F^2_{Q'}(t)|\); writing \(a=Q^\top h\), \(a'=Q'^\top h\) and splitting, for any \(\epsilon>0\), \(\{a^\top(X-\mu)\le t\}\triangle\{a'^\top(X-\mu)\le t\}\subseteq\{|a^\top(X-\mu)-t|\le\epsilon\}\cup\{|(a-a')^\top(X-\mu)|>\epsilon\}\), so by \(\sup_a\|f_a\|_\infty<\infty\), Chebyshev and \(\mathbb E\|X\|^2<\infty\), \(|F^2_Q(t)-F^2_{Q'}(t)|\le2\sup_a\|f_a\|_\infty\,\epsilon+\|\Sigma\|_{\op}\|a-a'\|^2/\epsilon^2\); optimizing over \(\epsilon\) gives \(|g_h(Q)-g_h(Q')|\le C\|a-a'\|^{2/3}\le C\|Q-Q'\|_{\op}^{2/3}\) (as \(\|a-a'\|=\|(Q-Q')^\top h\|\le\|Q-Q'\|_{\op}\)). Since \(\widehat\QS_S^3\to\QS_S\), \(\widehat g_{n,h}(\widehat\QS_S^3)\xrightarrow{\mathbb P}g_h(\QS_S)>0\), so \(T_{n,S}(h)\xrightarrow{\mathbb P}\infty\). Applying the argument of the proof of Theorem~\ref{thm:bootstrap} with limiting law \(P^{\mathrm{sym}}_S\) yields \(\|J_{n,S}^\dagger-J_S^\star\|_\infty\to0\) in probability, where \(J_S^\star\) is the continuous limit law under \(P^{\mathrm{sym}}_S\). We do not claim convergence of the critical value (its limiting quantile need not be unique); we only need that it is bounded in probability, \(d_{n,S}^\dagger(1-\alpha)=O_{\mathbb P}(1)\): for any \(M\) with \(J_S^\star(M)>1-\alpha\), uniform convergence gives \(J_{n,S}^\dagger(M)\ge1-\alpha\) w.p.\ \(\to1\), hence \(\mathbb P(d_{n,S}^\dagger(1-\alpha)\le M)\to1\). For \(p_{n,S}=1-J_{n,S}^\dagger(T_{n,S}(h)-)\): fix \(\varepsilon>0\), pick \(M\) with \(1-J_S^\star(M)<\varepsilon/2\); on \(\{\|J_{n,S}^\dagger-J_S^\star\|_\infty\le\varepsilon/2\}\cap\{T_{n,S}(h)>M\}\), monotonicity gives \(p_{n,S}\le1-J_S^\star(M)+\varepsilon/2<\varepsilon\), and both events have probability \(\to1\). Hence \(p_{n,S}\xrightarrow{\mathbb P}0\), and since \(\mathcal S\) is finite, \(p_n^{\mathrm{glob}}=\max_S p_{n,S}\xrightarrow{\mathbb P}0\), so \(\mathbb P(p_n^{\mathrm{glob}}\le\alpha)\to1\).
\end{proof}

\section{Sensitivity to random splits and bandwidth}
\label{sec:supp-sensitivity}

This section reports the diagnostics referred to in Section~\ref{sec:forex-application}. The procedure involves two user choices that are not part of the hypothesis: the random split of the sample and the bandwidth of the smoothed bootstrap. Neither choice is specific to this dataset. We regard the two checks below as routine diagnostics, to be run in any application of the method rather than as an ad-hoc verification of this particular analysis.

The first check concerns the split. Since the split is random, a conclusion that changes across splits should not be trusted. We repeated the procedure over ten independent random splits, with $B=500$ replicates each. Table~\ref{tab:split-diagnostics} reports, for each split, the angle between the leading eigenvector of that split and the full-sample direction $\widehat{u}_1$, together with the corresponding $p$-values. The angle never exceeds $2.3^{\circ}$, so the estimated common risk factor is very stable. This is what one expects given the well separated eigenvalues reported in Section~\ref{sec:forex-application}. The tests with $k=10$ and $k=100$ reject at the $5\%$ level in all ten splits, and the test with $k=1$ rejects in seven out of ten. We do not combine these repetitions into a new test: the formal conclusion in the main text relies only on the primary split.

\begin{table}[ht]
\centering
\begin{tabular}{ccccc}
\toprule
Split & Angle to full sample ($^\circ$) & $p\,(k=1)$ & $p\,(k=10)$ & $p\,(k=100)$ \\
\midrule
1  & 1.91 & \textbf{0.0020} & \textbf{0.0060} & \textbf{0.0020} \\
2  & 2.25 & \textbf{0.0339} & \textbf{0.0020} & \textbf{0.0020} \\
3  & 0.56 & \textbf{0.0020} & \textbf{0.0020} & \textbf{0.0020} \\
4  & 0.49 & \textbf{0.0100} & \textbf{0.0020} & \textbf{0.0020} \\
5  & 2.27 & 0.4291 & \textbf{0.0240} & \textbf{0.0080} \\
6  & 1.59 & 0.0559 & \textbf{0.0040} & \textbf{0.0060} \\
7  & 0.52 & \textbf{0.0499} & \textbf{0.0040} & \textbf{0.0020} \\
8  & 1.32 & \textbf{0.0319} & \textbf{0.0020} & \textbf{0.0020} \\
9  & 0.87 & \textbf{0.0080} & \textbf{0.0060} & \textbf{0.0040} \\
10 & 0.06 & 0.0978 & \textbf{0.0040} & \textbf{0.0020} \\
\bottomrule
\end{tabular}
\caption{Split-stability diagnostics across ten independent sample splits ($B=500$ bootstrap replicates per split): angle between the split's leading eigenvector and the full-sample direction $\widehat u_1$, and bootstrap $p$-values by aggregation level $k$. Bold $p$-values reject $H_0$ at the $5\%$ level.}
\label{tab:split-diagnostics}
\end{table}

The second check concerns the bandwidth, which is chosen by a rule of thumb. We varied the multiplier from $0.1$ to $2.0$ times its base value, so that $b_n/s_n$ ranged from $0.0265$ to $0.5308$. We recomputed the test with $k=100$ and $B=300$ replicates for seven multipliers. The $p$-value stayed at or near the smallest attainable value ($1/301\approx 0.0033$) in every case, at $0.0066$ for the two smallest multipliers and $0.0033$ for the rest. The rejection is therefore not driven by the amount of smoothing.

\setlength{\bibsep}{0.5ex plus 0.2ex}
\bibliographystyle{plainnat}
\bibliography{simetria}

\end{document}